\newtheorem{theorem}{Theorem}[section]
\newtheorem{lemma}[theorem]{Lemma}
\newtheorem{prop}[theorem]{Proposition}
\newtheorem{cor}[theorem]{Corollary}
\theoremstyle{definition}
\newtheorem{definition}[theorem]{Definition}
\newtheorem{remark}[theorem]{Remark}
\newtheorem{assumption}[theorem]{Assumption}
\newcommand{\A}[2]{\mathbb{A}_{#1}^{#2}}
\newcommand{\Q}{\mathbb{Q}}
\newcommand{\Z}{\mathbb{Z}}
\newcommand{\N}{\mathbb{N}}
\newcommand{\C}{\mathbb{C}}
\newcommand{\R}{\mathbb{R}}
\newcommand{\F}[1]{\mathbb{F}_{#1}}
\newcommand{\GL}[1]{\mathrm{GL}_{#1}}
\DeclareMathOperator{\AJ}{AJ}
\DeclareMathOperator{\CH}{CH}
\DeclareMathOperator{\cl}{cl}
\DeclareMathOperator{\dR}{dR}
\DeclareMathOperator{\Fil}{Fil}
\DeclareMathOperator{\fp}{fp}
\DeclareMathOperator{\Frob}{Frob}
\DeclareMathOperator{\Hom}{Hom}
\DeclareMathOperator{\Isom}{Isom}
\DeclareMathOperator{\pr}{pr}
\DeclareMathOperator{\Res}{Res}
\DeclareMathOperator{\Spa}{Spa}
\DeclareMathOperator{\Spec}{Spec}
\DeclareMathOperator{\Spf}{Spf}
\DeclareMathOperator{\Sp}{Sp}
\DeclareMathOperator{\Sym}{Sym}
\DeclareMathOperator{\syn}{syn}
\DeclareMathOperator{\tr}{tr}
\DeclareMathOperator\can{can}
\DeclareMathOperator{\Gal}{Gal}
\title[$p$-adic Gross-Zagier formula for twisted triple product $p$-adic $L$-functions]{A $p$-adic Gross--Zagier formula for twisted triple product $p$-adic $L$-functions attached to finite slope families}
\author{Ting-Han Huang and Ananyo Kazi}
\date{\today}
\begin{document}

\begin{abstract}
    Our main objective in the present paper is to generalise the work of Blanco-Chac\'{o}n and Fornea on the $p$-adic Gross--Zagier formula for twisted triple product $p$-aidc $L$-function.
    We extend their main result to the case of finite slope families of Hilbert modular forms and also allow the prime $p$ to be inert in the real quadratic field $L$.
\end{abstract}

\maketitle

\section{Introduction}
The goal of this article is to generalise the $p$-adic Gross--Zagier formula for twisted triple product $p$-adic $L$-functions, proved by Blanco-Chac\'{o}n and Fornea in \cite{blanco-chacón_fornea_2020} for ordinary families, to the case of finite slope families of Hilbert modular forms, under the assumption that $p$ is unramified in the quadratic totally real field. 
In particular, we allow $p$ to be inert, which is not covered in \cite{blanco-chacón_fornea_2020}.
The formula, in short, relates special values of the $p$-adic $L$-function $\mathscr{L}_p(\omega_\mathbf{g}, \omega_\mathbf{f})$ in the balanced region, to images of generalized \textit{Hirzebruch--Zagier} cycles under the syntomic Abel--Jacobi map.

This idea can be traced back to Darmon and Rotger \cite{DR}, where they considered triple product $p$-adic $L$-functions attached to ordinary families of elliptic modular forms. 
The $p$-adic Gross--Zagier formula serves as an important tool in their sequel paper \cite{DR2} to prove the implication `analytic rank $= 0 \Rightarrow$ algebraic rank $= 0$' in a special case of the equivariant BSD-conjecture.
Based on \cite{blanco-chacón_fornea_2020}, Fornea and Jin also proved an analogous result under another setting in their recent work \cite{Fornea_Jin_2024}.

The generalization to the finite slope case requires two main ingredients.
First, one needs a definition of the twisted triple product $p$-adic $L$-function $\mathscr{L}_p(\omega_\mathbf{g}, \omega_\mathbf{f})$ attached to finite slope families of Hilbert modular forms. 
This is achieved in the thesis of the second author \cite{twisted_tripleL_finite_slope}, which extends the construction of \cite{andreatta2021triple}.
Second, one needs to compute syntomic Abel--Jacobi maps and specialisations of the $p$-adic $L$-function in the finite slope case.
This technique is developed in the thesis of the first author \cite{tripleL_and_GZ_formula_finite_slope}, in which he generalised the result of \cite{DR}. 

On the $L$-function side, suppose $L/F$ is a quadratic extension of totally real fields, and $g$ (resp. $f$) is a $\GL{2,L}$ (resp. $\GL{2,F}$) Hilbert modular form of weight $(v,n) \in \Z[\Sigma_L] \times \Z$ (resp. of weight $(w,m) \in \Z[\Sigma_F] \times \Z$). Here $\Sigma_L$ is the set of archimedean embeddings of $L$, and similarly for $\Sigma_F$.  Assume that the weights are $F$-dominated, i.e. there exists $r \in \N[\Sigma_L]$ such that $(v+r,n)_{|F} = (w,m)$, and the central character of the unitary automorphic representation $\Pi$ attached to $g$ and $f$ is trivial when restricted to $\A{F}{\times}$. Then Ichino's formula relates the central critical value $L(1/2, \Pi, \mathrm{R})$ of the twisted triple product $L$-function associated with $\Pi$ to a period integral which can be realised up to certain complex periods as the following Petersson inner product (cf. Proposition \ref{P2010})
\[
L(1/2, \Pi, \mathrm{R}) \overset{\cdot}{=}  \left(\frac{\langle \zeta^*(\delta^r g), f^*\rangle}{\langle f^*, f^*\rangle}\right)^2.
\]
Here $f^*$ is the dual modular form whose Fourier coefficients are the complex conjugates of those of $f$, and $\delta$ is the Maass--Shimura differential operator. The right hand side of the above equation is algebraic, and one hopes to define a $p$-adic $L$-function by $p$-adically interpolating such values as $g, f$ vary.
In particular, for finite slope Hilbert modular forms, one needs a theory of Coleman families of Hilbert modular forms, as developed in \cite{andreatta2016adic} for example.
But this is not sufficient since the $p$-adic analogue of the Maass--Shimura operator does not preserve overconvergence, and one has to iterate the full Gauss--Manin connection instead. For that reason one has to develop a theory of \emph{nearly overconvergent} Hilbert modular forms, as has been developed in the thesis of the second author. Section 2 of this article recalls this construction. The main result of \S\ref{section: L-function} which was proved first in \cite{twisted_tripleL_finite_slope}, is the construction of a $p$-adic $L$-function satisfying the following interpolation property, with the Euler factors defined as in Theorem \ref{Theorem: interpolation formula}.

\begin{theorem}
    Let $\omega_{\mathbf{g}}, \omega_{\mathbf{f}}$ be Coleman families deforming (fixed) small slope $p$-stabilisations of $g$ and $f$, defined over connected rigid spectrum $\Sp{(\Lambda_g)}$ and $\Sp{(\Lambda_f)}$, such that their weights satisfy some mild analyticity hypothesis (cf. Assumption \ref{assumption: weight r}). There is a 3-variable rigid meromorphic function $\mathscr{L}_p(\omega_{\mathbf{g}}, \omega_{\mathbf{f}})$ defined on $\Sp{(\Lambda_g \hat{\otimes} \Lambda_f)}$, such that for all pairs of $F$-dominated weights $(x, c)$ and $(y, 2c)$ with $y = (x+t)_{|F}$ that are sufficiently large compared to the slope of $\omega_{\mathbf{f}}$, we have
    \[
    \mathscr{L}_p({\omega}_\mathbf{g}, {\omega}_\mathbf{f})\left((x,c), (y,2c)\right) = \frac{1}{\mathscr{E}({f}^*_y)}\left(\prod_{\mathfrak{p} \emph{\text{ inert}}} \mathscr{E}_{\mathfrak{p}}({g}_x, {f}^*_y) \prod_{\mathfrak{p} \emph{\text{ split}}} \frac{\mathscr{E}_{\mathfrak{p}}({g}_x, {f}^*_y)}{\mathscr{E}_{0,\mathfrak{p}}({g}_x,{f}^*_y)} \right) \times \frac{\langle\zeta^*(\delta^t {g}_x), {f}^*_y\rangle}{\langle {f}^*_y,{f}^*_y\rangle}.
    \]
\end{theorem}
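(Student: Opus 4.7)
The plan is to construct $\mathscr{L}_p(\omega_{\mathbf{g}}, \omega_{\mathbf{f}})$ by $p$-adically interpolating each of the three operations that appear in the classical identity: the Maass--Shimura iterate $\delta^t g$, the pullback-and-trace $\zeta^*$ induced by the embedding of Shimura data $\GL{2,F} \hookrightarrow \Res_{L/F}\GL{2,L}$, and the Petersson ratio $\langle -, f^*\rangle/\langle f^*, f^*\rangle$. The $p$-adic $L$-function will then be defined as the composition of the three $p$-adic avatars of these operations applied to the Coleman family $\omega_{\mathbf{g}}$, and the interpolation identity will follow by comparing, at classical arithmetic specialisations of sufficiently small slope, each $p$-adic operation with its classical counterpart; the discrepancies are exactly the Euler factors in the formula.

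First, I would replace $\delta^t$ by $\nabla^\tau$, where $\nabla$ is the full Gauss--Manin connection acting on the sheaves of nearly overconvergent Hilbert modular forms introduced in Section 2. Unlike $\delta$, the connection $\nabla$ preserves overconvergence, and it can be iterated to a variable weight $\tau$ using the unit-root splitting of the Hodge filtration at the ordinary locus; this produces a nearly overconvergent family $\nabla^\tau \omega_{\mathbf{g}}$ whose specialisation at a classical arithmetic point $(x,t)$ differs from $\delta^t g_x$ exactly by the local factor $\mathscr{E}_{0,\mathfrak{p}}$ at split primes (reflecting the discrepancy between the Hodge and unit-root splittings). Second, the map $\zeta^*$ extends to the sheaves of nearly overconvergent forms by functoriality of the underlying Shimura varieties, producing a family of nearly overconvergent $\GL{2,F}$-forms whose weight matches that of $\omega_{\mathbf{f}}$. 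Third, for the Petersson projection I would use the Coleman-theoretic $\mathbf{f}^*$-isotypic projector on the overconvergent cohomology of the finite-slope family: this is a $\Lambda_f$-valued functional on nearly overconvergent forms for $F$ whose classical specialisation at $(y,2c)$ realises $\mathscr{E}({f}^*_y)^{-1}\langle -, {f}^*_y\rangle/\langle {f}^*_y,{f}^*_y\rangle$, the factor $\mathscr{E}({f}^*_y)^{-1}$ arising from the normalisation of the $p$-adic Poincar\'e pairing against $U_p$-stabilisations. Composing these three operations produces a rigid meromorphic function on $\Sp(\Lambda_g \hat{\otimes} \Lambda_f)$, and its specialisations at classical points are given by the stated formula.

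The main technical obstacle, and the principal new input beyond \cite{andreatta2021triple}, is the treatment of primes $\mathfrak{p}$ inert in $L/F$. Two issues arise: first, the unit-root splitting used to interpolate $\nabla^\tau$ must be replaced by a more delicate construction on the Hilbert modular variety for $L$ at an inert prime, since Frobenius acts irreducibly on the rank-two Dieudonn\'e module of the universal abelian scheme rather than admitting a unit-root line; second, the local factor $\mathscr{E}_{\mathfrak{p}}(g_x, {f}^*_y)$ at such primes must be computed afresh from Ichino's formula (Proposition \ref{P2010}) via the local representation theory of $\Pi_{\mathfrak{p}}$ at an unramified inert place and the corresponding local Atkin--Lehner calculation; the absence of a factor $\mathscr{E}_{0,\mathfrak{p}}$ at inert primes in the formula reflects that no splitting-discrepancy appears at the first step in that case. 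Once these ingredients are in place, the interpolation identity is obtained by assembling the local computations at every place above $p$ with the classical Ichino formula.
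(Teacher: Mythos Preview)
Your overall three-step outline (interpolate the differential operator, pull back along $\zeta$, project onto the $\mathbf{f}^*$-eigenspace) matches the paper's architecture, but the mechanism you propose for the first step is the wrong one for finite slope families, and this leads you to misidentify the origin of every Euler factor.

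You write that $\nabla$ ``preserves overconvergence'' and that it ``can be iterated to a variable weight $\tau$ using the unit-root splitting of the Hodge filtration at the ordinary locus.'' Neither claim survives in the finite slope setting: the unit-root splitting is exactly what one loses when leaving the ordinary locus, and $\nabla$ sends overconvergent forms only to \emph{nearly} overconvergent ones. The paper's construction (Definition~\ref{definition: twisted triple product}) instead works on the sheaves $\mathbb{W}_k$ of nearly overconvergent forms built from vector bundles with marked sections; the key point, which your proposal omits entirely, is that one must first \emph{$p$-deplete} $\omega_{\mathbf{g}}$ to $\omega_{\mathbf{g}}^{[\mathcal{P}]}$ before $\nabla^r$ can be iterated to a $p$-adic weight (Theorem~\ref{T2022}). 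One then applies $\zeta^*$, the finite-slope projector $e^{\leq a}$, and the overconvergent projection $H^\dagger$ (which you also do not mention) to land back in a space where the Petersson-type pairing with $\omega_{\mathbf{f}}^*$ is available.

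Consequently your account of the Euler factors is off. The factors $\mathscr{E}_{\mathfrak{p}}(g_x,f_y^*)$ and $\mathscr{E}_{0,\mathfrak{p}}(g_x,f_y^*)$ do not come from Ichino's formula or from any ``Hodge versus unit-root'' discrepancy; they arise purely from undoing the $p$-depletion, i.e.\ from comparing $H^{\dagger,\leq a}(\zeta^*\nabla^t g_x^{[\mathcal{P}]})$ with the classical $\zeta^*\delta^t g_x$ via the Hecke polynomials at the primes above $p$ (this is the computation of \cite[\S3.4]{blanco-chacón_fornea_2020}, to which the paper's proof of Theorem~\ref{Theorem: interpolation formula} defers). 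The factor $\mathscr{E}(f_y^*)$ comes from comparing the Petersson pairing against the $p$-stabilisation $f_y^{*,(\alpha)}$ with that against $f_y^*$ itself. Finally, your claimed obstacle at inert primes---an irreducible Frobenius on the Dieudonn\'e module blocking a unit-root line---is not an obstacle here precisely because the construction never uses the unit-root splitting; the inert case is handled by the same $p$-depletion mechanism, and the absence of $\mathscr{E}_{0,\mathfrak{p}}$ there simply reflects the shape of the Hecke polynomial at an inert prime, not any splitting phenomenon.
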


On the other hand, one may also consider the values of $\mathscr{L}_p(\omega_\mathbf{g}, \omega_\mathbf{f})$ outside the $F$-dominated region.
To study this case, we will restrict ourself to $F = \Q$.
We first fix a balanced tuple $((v,n),(w,m))$ of classical weights, i.e. there exists $s \in \N$ such that $(v- (s+1, 0) , n)_{|\Q} = (w,m)$. Let $\ell := 2v + nt_L$ where $t_L$ is the parallel weight, and $k = 2w+m$.
We will further assume that the specialization of the family $\omega_{\mathbf{g}}$ (resp. $\omega_{\mathbf{f}}$) at the weight $\ell$ (resp. $k$) is a $p$-stabilization of a classical modular form, denoted by $g_\ell$ (resp. $f_k$).

Let $K / \mathbb{Q}_p$ be a large enough extension and $\mathcal{O}_K$ be its ring of integers.
Inside the Kuga-Sato variety $\mathscr{U}_{\ell-4} \times_{\mathcal{O}_K} \mathscr{W}_{k-2}$  (cf. \S \ref{subsection: AJ images}), there is a de Rham null-homologous cycle called the generalized \textit{Hirzebruch--Zagier} cycle,
\[ \Delta_{\ell, k} \in \CH^{d-\gamma -1} (\mathscr{U}_{\ell-4} \times \mathscr{W}_{k-2})_0,
\]
where $d$ is the relative dimension of $\mathscr{U}_{\ell-4} \times \mathscr{W}_{k-2}$ and $\gamma = \frac{|\ell| +k -6}{2}$.
The syntomic Abel--Jacobi map $\AJ_p$ then sends $\Delta_{\ell, k} $
to $[ \Fil^{\gamma+2} H^{|\ell|+k -3}_{\dR} (U_{\ell-4} \times_K W_{k-2} / K) ]^\vee$.
A specific element 
$$ \pi_1^* \omega_{g_\ell} \cup \pi_2^* \eta_{f_k} \in \Fil^{\gamma+2} H^{|\ell|+k -3}_{\dR} (U_{\ell-4} \times_K W_{k-2} / K)$$
corresponding to the forms $g_\ell$ and $f_k$ will be given in \S \ref{subsection: AJ images} such that the evaluation 
$$\AJ_p (\Delta_{\ell, k}) (\pi_1^* \omega_{g_\ell} \cup \pi_2^* \eta_{f_k})$$ 
makes sense.

Our main results are the following two theorems, where the various Euler factors will be introduced in \S \ref{section: GZ-formula}.

\begin{theorem} \label{theorem: main theorem split case}
    Suppose $p$ is split in $L$.
    Let $(P, Q)$ be a classical point corresponding to a balanced tuple $(\ell, k)$ as above. 
    Then we have 
\begin{equation*}
    \mathscr{L}_p(\omega_\mathbf{g}, \omega_\mathbf{f})(P, Q) = \frac{(-1)^s}{s! \mathscr{E}(f_k^*)} \frac{\mathscr{E}_p (g_\ell, f_k^*)}{\mathscr{E}_{0, p} (g_\ell, f_k^*) } \AJ_p (\Delta_{\ell, k}) (\pi_1^* \omega_{g_\ell} \cup \pi_2^* \eta_{f_k}).
\end{equation*}
\end{theorem}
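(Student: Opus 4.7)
The plan is to evaluate $\mathscr{L}_p(\omega_\mathbf{g},\omega_\mathbf{f})$ at the balanced classical point $(P,Q)$ directly from its construction, recognise the resulting triple-product expression as a Poincar\'e pairing on the finite-slope part of the space of nearly overconvergent forms, and then identify that pairing with the syntomic Abel--Jacobi evaluation by a Besser-type formula, carefully tracking the correction factors coming from $p$-stabilisation and $p$-depletion.

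First, I would revisit the construction of $\mathscr{L}_p(\omega_\mathbf{g},\omega_\mathbf{f})$ recalled in \S\ref{section: L-function}. Even though the interpolation property in Theorem~\ref{Theorem: interpolation formula} is stated for $F$-dominated weights with $r\in\N[\Sigma_L]$, the underlying rigid meromorphic function is given by a uniform formula of the shape $(x,y)\mapsto \langle e_{\mathbf{f}^*}\zeta^{*}(\nabla^{t}g_x^{[p]}),\,f_y^{*}\rangle/\langle f_y^*,f_y^*\rangle$, where $\nabla$ is the iterated Gauss--Manin connection acting on nearly overconvergent Hilbert modular forms, $g^{[p]}$ denotes the $p$-depletion, and $e_{\mathbf{f}^*}$ is the finite slope projector cutting out the $f^{*}$-isotypic component (this is the core content of the second author's thesis). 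Since $p$ is split in $L$, one has $L\otimes\Q_p \cong \Q_p\times\Q_p$ and $\nabla$ factors as $\nabla_{\mathfrak{p}_1}\nabla_{\mathfrak{p}_2}$. At the balanced point $(P,Q)$ the exponent $t=-s-1$ is negative, and on the $p$-depleted form each $\nabla_{\mathfrak{p}_i}^{-1}$ is well defined as the inverse of the $\theta$-operator. Expanding the resulting power-series identity produces the combinatorial factor $(-1)^{s}/s!$ in front of the pairing.

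Second, I would interpret this specialised pairing geometrically. In the split case, the quaternionic Shimura variety attached to $L$ collapses to a product of two modular curves, and the triple product setting identifies with the Darmon--Rotger setup in which the Hirzebruch--Zagier cycle $\Delta_{\ell,k}$ arises from the diagonal embedding of a modular curve into $\mathscr{U}_{\ell-4}\times\mathscr{W}_{k-2}$. Here I would invoke the syntomic Abel--Jacobi formula in the finite slope setting, proved in the first author's thesis \cite{tripleL_and_GZ_formula_finite_slope} as the extension of Besser's formula: for a balanced triple $(g_\ell^{[p]},f_k^*,\cdot)$ the quantity $\AJ_p(\Delta_{\ell,k})(\pi_1^{*}\omega_{g_\ell}\cup\pi_2^{*}\eta_{f_k})$ equals precisely the Poincar\'e pairing $\langle e_{\mathbf{f}^*}\zeta^{*}(\nabla^{-s-1}g_\ell^{[p]}),\,f_k^*\rangle/\langle f_k^*,f_k^*\rangle$, up to the explicit constants to be tracked. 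The split hypothesis is used here to reduce the geometric side to the elliptic situation of \cite{DR}; the nearly overconvergent apparatus supplies the required $F$-isocrystal representative of the de Rham class underlying $\pi_1^{*}\omega_{g_\ell}\cup\pi_2^{*}\eta_{f_k}$.

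Third, I would collect the Euler factors. Passing from $g_\ell^{[p]}$ and $f_k^*$ back to the $p$-stabilisations of the classical forms gives $\mathscr{E}_p(g_\ell,f_k^*)/\mathscr{E}_{0,p}(g_\ell,f_k^*)$; because $p$ is split, both primes above $p$ contribute, one supplying the ``good'' ratio and one the ``bad-root'' ratio, matching the product in Theorem~\ref{Theorem: interpolation formula} specialised to this case. The normalisation of the finite slope projector $e_{\mathbf{f}^*}$ against the Poincar\'e pairing contributes the factor $\mathscr{E}(f_k^*)^{-1}$, as in the finite slope overconvergent projector formula. Combining these with the $(-1)^s/s!$ from the previous step yields the stated identity.

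The main obstacle I anticipate is the syntomic computation underlying the second step: extending the Besser-type formula to the finite slope, nearly overconvergent setting while matching the combinatorial constants exactly. The analytic ingredient (the $\theta^{-1}$-expansion) and the geometric ingredient (unfolding $\AJ_p$ on $\Delta_{\ell,k}$) have to be reconciled on the nose so that the $(-1)^s/s!$, the slope-denominator $\mathscr{E}(f_k^*)^{-1}$, and the local Euler factors at $p$ appear with the correct signs and normalisations; everything else in the argument is a formal consequence of the construction of $\mathscr{L}_p$ once this core identification is in place.
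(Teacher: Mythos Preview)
Your outline has the right overall shape---specialise $\mathscr{L}_p$ at the balanced point, match it with a syntomic pairing, and collect Euler factors---but there is a genuine gap in the middle step, and one geometric claim is incorrect.

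First, the assertion that in the split case ``the quaternionic Shimura variety attached to $L$ collapses to a product of two modular curves'' is false. The Hilbert modular surface for a real quadratic field is never a product of modular curves; the splitting of $p$ only affects the local picture at $p$ (two Hecke operators $T_0(\mathfrak{p}_1),T_0(\mathfrak{p}_2)$, two partial Frobenii, etc.), not the global geometry. Consequently you cannot simply import the Darmon--Rotger elliptic formula or the first author's thesis \cite{tripleL_and_GZ_formula_finite_slope} as a black box: those treat three elliptic modular forms, whereas here one form is genuinely Hilbert.

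Second, the heart of the paper's proof is exactly the syntomic computation you defer. One works in finite polynomial cohomology with the product polynomial $P(T)=(1-\alpha_1\alpha_2T)(1-\alpha_1\beta_2T)(1-\beta_1\alpha_2T)(1-\beta_1\beta_2T)$ built from the two local Hecke polynomials $P_1,P_2$. The key step, specific to the split case, is the decomposition $P(T_1T_2)=a_2P_1+b_1P_2$, which yields $P(V_0(p))g=d_1^{\ell_1-1}h+d_1^{\ell_1-1}h_1+d_2^{\ell_2-1}h_2$ with $h=(1-\alpha_1\beta_1\alpha_2\beta_2 V_0(p)^2)d_1^{1-\ell_1}g^{[\mathcal P]}$ and $h_1,h_2$ involving the ``cross'' terms $b_1,b_2$. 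One then constructs nearly overconvergent primitives $H,H_1,H_2$ and must show that $e^{\leq a}(\tau H_1+\tau H_2)=0$, which follows from the identity $U\zeta^*(V_0(\mathfrak{p}_2)g^{[\mathfrak{p}_1]})=0$. Only after this vanishing does the Abel--Jacobi image reduce to the single term $\tau H$, and it is the explicit comparison of the $q$-expansion of $\tau H$ with that of $\zeta^*\nabla_1^{-s-1}g_\ell^{[\mathcal P]}$ (via the overconvergent projection formula) that produces the factor $(-1)^s s!$; it does not fall out of a ``$\theta^{-1}$ power-series expansion'' on the $L$-function side alone. Your proposal does not supply any of these ingredients, and without them the identification of the two sides does not go through.
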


\begin{theorem} \label{theorem: main theorm inert case}
    Suppose $p$ is inert in $L$.
    Let $(P, Q)$ be a classical point corresponding to a balanced tuple $(\ell, k)$ as above. 
    Then we have 
    \begin{equation*}
    \mathscr{L}_p(\omega_{\mathbf{g}}, \omega_{\mathbf{f}})(P, Q) = \frac{(-1)^s}{s! \mathscr{E}(f_k^*)} {\mathscr{E}_p(g_\ell, f_k^*)} \AJ_p (\Delta_{\ell, k}) (\pi_1^* \omega_{g_\ell} \cup \pi_2^* \eta_{f_k}).
    \end{equation*}
\end{theorem}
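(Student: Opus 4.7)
The strategy is to deduce the inert formula by $p$-adic analytic continuation from the $F$-dominated region, where the interpolation property of $\mathscr{L}_p(\omega_{\mathbf{g}},\omega_{\mathbf{f}})$ is known explicitly, into the balanced region containing $(P,Q)$, and then to identify the resulting nearly overconvergent Petersson pairing with the syntomic Abel--Jacobi evaluation. This mirrors the approach of Darmon--Rotger and Blanco-Chac\'on--Fornea, with the new technical ingredient being a careful treatment of the single inert prime above $p$.

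First, I would exhibit both sides of the desired identity as specializations of rigid meromorphic functions on a small neighbourhood of $(P,Q)$ in weight space. The left-hand side is such by construction. For the right-hand side, using the nearly overconvergent machinery of \cite{twisted_tripleL_finite_slope} and the syntomic Abel--Jacobi computations of \cite{tripleL_and_GZ_formula_finite_slope}, I would express $\AJ_p(\Delta_{\ell,k})(\pi_1^*\omega_{g_\ell}\cup\pi_2^*\eta_{f_k})$ as a specialization of a $p$-adic iterated integral of the form $\langle \zeta^*(\nabla^t g_x),\, f_y^*\rangle / \langle f_y^*, f_y^*\rangle$, where $\nabla$ is the Gauss--Manin connection on nearly overconvergent Hilbert modular forms. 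Since on classical weights in the $F$-dominated locus $\nabla^t g_x$ coincides with $\delta^t g_x$, the interpolation formula from Section \ref{section: L-function} identifies these two rigid functions up to the explicit Euler factors, and Zariski density of the dominated locus in a small polydisc propagates the equality to the balanced point $(P,Q)$.

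The main obstacle, and the point where the inert case genuinely departs from the split one, is the local $p$-adic geometry of the Hilbert modular surface attached to $L$ at the inert prime. Since there is only one prime above $p$, the product over primes in the interpolation formula collapses to the single factor $\mathscr{E}_p(g_\ell, f_k^*)$ without any $\mathscr{E}_{0,p}$-correction, which accounts for the shape of the right-hand side in contrast with Theorem \ref{theorem: main theorem split case}. Geometrically, however, the Hilbert modular variety no longer admits a product structure at $p$, and one must match the nearly overconvergent differential operator on the inert component with the geometric Gauss--Manin map on the Kuga-Sato variety $\mathscr{U}_{\ell-4}\times_{\mathcal{O}_K}\mathscr{W}_{k-2}$ in this non-split local setting. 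This is the technical heart of the argument and relies on the constructions in \cite{tripleL_and_GZ_formula_finite_slope}. Once these local computations are assembled with the explicit tracking of the Euler factors $\mathscr{E}(f_k^*)$ and $\mathscr{E}_p(g_\ell,f_k^*)$ and the combinatorial factor $(-1)^s/s!$ coming from the normalisation of $\nabla^t$ versus $\delta^t$, the density argument of the first step concludes the proof.
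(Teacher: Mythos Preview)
Your proposal has a genuine gap at the crucial step. You want to express the Abel--Jacobi value as the specialisation of a rigid analytic function and then match it with $\mathscr{L}_p$ on the $F$-dominated locus by density. But $\AJ_p(\Delta_{\ell,k})(\pi_1^*\omega_{g_\ell}\cup\pi_2^*\eta_{f_k})$ is only defined at an individual balanced weight, not as a meromorphic function on weight space; to rewrite it as a Petersson pairing of nearly overconvergent forms is precisely the syntomic computation that constitutes the theorem, so invoking it in step one is circular. Moreover, in the $F$-dominated region the interpolation formula involves $\delta^t g_x$ with $t\ge 0$, whereas at the balanced point one needs $\nabla^{-(s+1)}$; no density argument will turn the former into the latter without first performing the direct evaluation of the $L$-function at $(P,Q)$ and the separate syntomic computation of the AJ image.

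The paper proceeds differently and by direct computation. On the AJ side, since $p$ is inert there is a single Hecke polynomial $P(T)=(1-\alpha T)(1-\beta T)$, and the key point (highlighted in the introduction) is that one may take the primitive $G=\nabla_1^{-1}\omega_g^{[p]}$ outright, so that $[\omega_g,G]$ represents $\tilde\omega$ in finite polynomial cohomology; no decomposition $P(T_1T_2)=a_2P_1+b_1P_2$ is needed, which is the true reason no $\mathscr{E}_{0,p}$-factor appears (not merely that ``there is only one prime''). One then computes the $q$-expansion of $\tau G=\theta_k^*\varphi_{2,*}\varphi_1^*\theta_\ell^* G$ explicitly and, via formula~(\ref{equation: overconvergent projection}) for $H^\dagger$, checks the combinatorial identity $\tau G=(-1)^s s!\,\zeta^*(\nabla^{(-(s+1),0)}g_\ell^{[p]})$. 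Comparing with the specialisation of the defining expression of $\mathscr{L}_p$ at $(P,Q)$ and accounting for $\mathscr{E}(f_k^*)$ from the level-at-$p$ Petersson comparison yields the formula. Your outline omits the primitive $\nabla_1^{-1}\omega_g^{[p]}$, the explicit $q$-expansion match, and the finite-polynomial-cohomology mechanism producing $\mathscr{E}_p(g_\ell,f_k^*)$, which are the substance of the argument.
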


We remark that we are able to prove the formula in the inert case.
The reason is that if one works with the sheaf of nearly overconvergent forms in \cite{twisted_tripleL_finite_slope}, one is able to $p$-adically iterate the Gauss--Manin connection $\nabla$.
It is then easy to obtain a primitive of a $p$-depleted form $\omega_{\mathbf{g}}^{[\mathcal{P}]}$ by simply taking $\nabla^{-1} \omega_{\mathbf{g}}^{[\mathcal{P}]}$ (compare this to \cite[Corollary~4.7]{blanco-chacón_fornea_2020}).

With our $p$-adic Gross--Zagier formulae, one is then able to generalise the result of \cite[Corollary~6.1]{blanco-chacón_fornea_2020} on the dimension of Selmer group, to the case where $f$ is of finite slope at $p$ and $p$ is unramified in $L$.

Another application is the comparison between the automorphic and motivic $L$-functions analogous to \cite[Theorem~10.1]{Fornea_Jin_2024}, when the families are of finite slope at $p$.
In other words, one can interpret the $p$-adic $L$-function $\mathscr{L}_p(\omega_{\mathbf{g}}, \omega_{\mathbf{f}})$ as an image of a certain class under the big logarithm of Perrin-Riou.
The proof is essentially the same as in \textit{loc. cit.},
so we decided to omit it.


\vspace{2pt}

\paragraph{\textbf{Notations}.} 
Throughout this article, we will frequently consider families of modular forms and classical modular forms.
We will primarily use the notations ${\mathbf{g}}, {\mathbf{f}}$ and $\omega_{\mathbf{g}}, \omega_{\mathbf{f}}$ for families of modular forms, while we use $g, f$ and $\omega_g, \omega_f$ for classical modular forms.
The notation $\langle \phantom{e}, \phantom{e} \rangle$ will be reserved for the Petersson product, which is linear in the first factor (contrary to the convention in \cite{DR}).
When we deal with the Poincar\'{e} pairing, we will always use $\langle \phantom{e}, \phantom{e} \rangle_{\dR}$ to distinguish it from the Petersson product.

\vspace{2pt}

\paragraph{\textbf{Acknowledgements}.} 
This project started when both the authors were at Concordia University, where the first author was a PhD student and the second author was a postdoctoral researcher. 
We would like to thank Michele Fornea, David Loeffler, and Giovanni Rosso for their suggestions on an earlier version of the paper. We would also like to thank Fabrizio Andreatta and Adrian Iovita for encouraging us to work on this problem.
The first author was supported by NSERC grants RGPIN 2018-04392, ALLRP 577144-22 and FRQNT grant 2019-NC-254031 when he was at Concordia, and was supported by the grant G5392391GRA3C565LAGAXR from the ANR-DFG Project HEGAL as a postdoctoral researcher in LAGA, Universit\'{e} Paris XIII. The second author gratefully acknowledges the support of an NSERC grant during his time as a postdoc at Concordia, as well as of the European Research Council's Consolidator Grant “ShimBSD: Shimura varieties and the BSD conjecture”, grant ID 101001051, that he received as a postdoc at UniDistance Suisse through the Horizon 2020 Excellent Science programme.

\newcommand{\Sh}[1]{\mathcal{#1}}
\newcommand{\Gm}{\mathbb{G}_m}
\newcommand{\Ga}{\mathbb{G}_a}
\newcommand{\Xrig}[2]{\bar{\Sh{X}}^{G_{#1}}_{#2}}
\newcommand{\Xfor}[2]{\bar{\mathfrak{X}}^{G_{#1}}_{#2}}
\newcommand{\Hdg}[1]{\mathrm{Hdg}^{#1}}
\newcommand{\V}[3]{\mathbb{V}_0(#1,#2,#3)}
\newcommand{\Hs}[1]{\Sh{H}^{\sharp}_{\Sh{A}#1}}

\section{Twisted triple product \texorpdfstring{$p$}{p}-adic  \texorpdfstring{$L$}{L}-function}\label{section: L-function}

In this section we briefly recall the definition of the twisted triple product $L$-function, both the classical one and its $p$-adic avatars. 

Let $L/F$ be a quadratic extension of totally real number fields, with $[F:\Q] = d$. Assume $p$ is a rational prime that is unramified in $L$. Let $\Sigma_L$ and $\Sigma_F$ be their respective sets of embeddings into a common finite extension $K$ of $\Q_p$, where both $L$ and $F$ are split. Let $G_L = \Res_{L/\Q}\mathrm{GL}_{2,L}$, and $G_F = \Res_{F/\Q}\mathrm{GL}_{2,F}$.

\subsection{The weight space}
Here we recall the definition of the weight space for Hilbert modular forms, for the group $G_F$. Everything generalizes analogously for $G_L$. 

Let $\mathbb{T} = \Res_{\Sh{O}_F/\Z}\Gm$, so that $\mathbb{T}(\Z_p) = (\Sh{O}_F \otimes \Z_p)^{\times}$. 
Let $\Lambda^{G_F} := \Sh{O}_K\llbracket \mathbb{T}(\Z_p) \times \Z_p^{\times}\rrbracket$, $\mathfrak{W}^{G_F} = \Spf{\Lambda^{G_F}}$, and $\Sh{W}^{G_F} = \Spa(\Lambda^{G_F},\Lambda^{G_F})^{\text{an}}$ be the associated analytic adic space.
Let $(v^{\text{un}}, w^{\text{un}}) \in \Hom_{\text{cont}}((\Sh{O}_F\otimes \Z_p)^{\times} \times \Z_p^{\times}, (\Lambda^{G_F})^{\times})$ be the universal character.

There is another auxiliary weight space $\Sh{W}_F := \Spa(\Sh{O}_K\llbracket (\Sh{O}_F \otimes \Z_p)^{\times} \rrbracket)^{\text{an}}$ and a map $\Sh{W}^{G_F} \to \Sh{W}_F$ induced by the algebra map that is described on the group-like elements $(\Sh{O}_F \otimes \Z_p)^{\times} \to (\Sh{O}_F \otimes \Z_p)^{\times} \times \Z_p^{\times}$ as sending $t \mapsto (t^2, \mathrm{Nm}_{F/Q}(t))$. Equivalently, on the $\C_p$ points of the weight spaces the map can be described as sending a weight $(w, m) \in \Sh{W}^{G_F}(\C_p) \mapsto 2w + mt_F \in \Sh{W}_F(\C_p)$ where $t_F = (1, \dots, 1) \in \Z^{\Sigma_F}$ is the parallel weight $1$. 

For a $p$-adically complete flat $\Sh{O}_K\llbracket (\Sh{O}_F \otimes \Z_p)^{\times}\rrbracket$-algebra $R$ and a weight $k\colon (\Sh{O}_F \otimes \Z_p)^{\times} \to R^{\times}$, we call $k$ an $m$-analytic character if $k = \chi\cdot k^0$ where $\chi = k_{|(\Sh{O}_F \otimes \Z_p)^{\times}_{\mathrm{tor}}}$ is the finite part of the character and $k^0$ is analytic on $1+p^m(\Sh{O}_F \otimes \Z_p)$. We note that there are open subspaces $\Sh{W}_{F,m}$ of the weight space $\Sh{W}_F$ where the universal character satisfies this ``$m$-analyticity" property.

\subsection{Generalities on Hilbert modular forms} \label{subsection: Hilbert modular forms}
There are two related notions of Hilbert modular forms: \emph{arithmetic} Hilbert modular forms which are directly related to automorphic representations, and hence suited for arithmetic applications like the study of $L$-functions; \emph{geometric} Hilbert modular forms which have a modular interpretation as sections of modular sheaves on Hilbert modular varieties, and hence more suitable for geometric constructions. But before we explain these notions, let us provide a simpler definition of the arithmetic Hilbert modular forms using the language of automorphic forms as functions on the adelic points of $G_L$ (or $G_F$).

Let $K_1(N_g) = \left\{\left(\begin{smallmatrix}
    a & b \\
    c & d
\end{smallmatrix}\right) \in \mathrm{GL}_2(\hat{\Sh{O}}_L) \, | \, d \equiv 1, c \equiv 0 \text{ mod } N_g\right\}$ for an ideal $N_g \subset \Sh{O}_L$. 
A Hilbert modular cuspform $g$ of weight $(v,n) \in \Z[\Sigma_L] \times \Z$  is a function $g \colon G_L(\Q)\backslash G_L(\A{}{})/K_1(N_g) \to \C{}$ that satisfies the following properties:
\begin{enumerate}
    \item For every finite adelic point $x \in G_L(\A{}{\infty})$ (the superscript denotes ``away from the $\infty$-places"), the well-defined function $f_x \colon \mathfrak{h}^{2d} \to \C$ given by $f_x(z) = f(u_{\infty}x)j_{k,v}(u_{\infty}, \mathbf{i})$ is holomorphic. Here $\mathfrak{h}$ is the usual Poincar\'{e} upper half plane, $u_{\infty} = \left(\begin{smallmatrix}
        a & b \\
        c & d
    \end{smallmatrix}\right) \in \GL{2}(L\otimes \R)_{+}$ such that $u_{\infty} \cdot \mathbf{i} = z$ under the usual M\"{o}bius action, $k = 2v + nt_L$, and $j_{k,v}(u_{\infty}, z) = (ad-bc)^{-v}(cz + d)^k$ is the automorphic factor.
    \item $f(xu_{\infty}) = f(x)j_{k,v}(u_{\infty}, \mathbf{i})^{-1}$ for any $u_{\infty} \in \mathrm{Stab}(\mathbf{i}) \cap G_L(\R)_+$.
    \item For all adelic points $x \in \GL{2}(\A{L}{})$, and for all additive measures $da$ on $L \backslash \A{L}{}$, we have 
    \[
     \int_{L\backslash\A{L}{}} \phi_f\left(\left(\begin{smallmatrix}
        1 & a \\
        0 & 1
    \end{smallmatrix}\right)x\right)da = 0.
    \]
\end{enumerate}

Let us now quickly set up the geometric setting that will be useful later to talk about $p$-adic interpolation of modular forms. Let $M^{G_F}_{\Q_p}$ be the (base change to $\Q_p$ of the) Shimura variety associated with $G_F$ of level $K$ for a neat open compact $K = K^{(p)}K_p \subset G_F(\hat{\Z})$ such that $K_p = G_F(\Z_p)$. The Shimura variety $M^{G_F}_{\Q_p}$ is not connected and its connected components are parametrised by the strict class group $\mathrm{cl}^+_F(K) := F^{\times,+}\backslash \A{F,f}{\times}/\det K$. The connected components of $M^{G_F}_{\Q_p}$ are finite \'{e}tale quotients of certain Hilbert modular varieties which we explain below.

For a fractional ideal $\mathfrak{c}$ of $F$, let $X^{G_F, \mathfrak{c}}/\Spec{\Z_p}$ be the Hilbert modular scheme classifying tuples $(A, \iota, \lambda, \alpha_K)$ where $A$ is an abelian variety of dimension $[F:\Q]$ with real multiplication structure given by $\iota \colon \Sh{O}_F \to \mathrm{End}(A)$, $\lambda \colon (\Hom^{\text{sym}}_{\Sh{O}_F}(A, A^{\vee}), \mathrm{Pol}_{\Sh{O}_F}(A)) \xrightarrow{\sim} (\mathfrak{c}, \mathfrak{c}^{+})$ is a $\mathfrak{c}$-polarisation structure in the sense of \cite{DelignePappas}, and $\alpha_K$ is a $K$-level structure. There is a natural action of the totally positive units $\Sh{O}_F^{\times, +}$ on $X^{G_F,\mathfrak{c}}/\Spec{\Z_p}$ given by $\epsilon \cdot (A,\iota, \lambda, \alpha_K) = (A, \iota, \epsilon\lambda, \alpha_K)$. This action factors through the finite quotient $\Delta_K := \Sh{O}_F^{\times, +}/(K \cap \Sh{O}_F^{\times})^2$. The geometrically connected components of $X^{G_F,\mathfrak{c}}$ are parametrised by $\hat{\Sh{O}}_F^{\times}/\det K$, and the stabiliser of each geometrically connected component for the action of $\Delta_K$ is $(\det K \cap \Sh{O}_F^{\times,+})/(K\cap \Sh{O}_F^{\times})^2$. Therefore, the geometrically connected components of $X^{G_F,\mathfrak{c}}/\Delta_K$ are parametrised by $\Sh{O}_F^{\times,+}\backslash \hat{\Sh{O}}_F^{\times}/\det K = \ker(\mathrm{cl}^+_F(K) \to \mathrm{cl}^+_F)$ where $\mathrm{cl}^+_F = \mathrm{cl}^+_F(G_F(\hat{\Z}))$ is the strict class group. 

Let $\mathfrak{N}$ be the modulus of $\mathrm{cl}^+_F(K)$. Consider the group $I_{\mathfrak{N}p}$ of fractional ideals of $F$ coprime to $\mathfrak{N}p$, and let $P_{\mathfrak{N}p} = \ker(I_{\mathfrak{N}p} \to \mathrm{cl}^+_F)$ be the subgroup of ideals generated by $x \in F^{\times,+}$ such that $x$ is coprime to $\mathfrak{N}p$. Let $X^{G_F}/\Spec \Z_p = \sqcup_{\mathfrak{c} \in I_{\mathfrak{N}p}} X^{G_F,\mathfrak{c}}$. Let $F^{\times,+}_{\mathfrak{N}p}$ be the group of totally positive $x \in F^{\times,+}$ such that $x$ is coprime to $\mathfrak{N}p$.  There is a natural action of $F^{\times, +}_{\mathfrak{N}p}$ on $X^{G_F}$ given by $x \cdot (A, \iota, \lambda, \alpha_K) = (A, \iota, x\lambda, \alpha_K)$. For an ideal $(x) \in P_{\mathfrak{N}p}$, multiplying the polarisation isomorphism by $x$ induces an isomorphism $X^{G_F,\mathfrak{c}}/\Delta_K \to X^{G_F, x\mathfrak{c}}/\Delta_K$. Thus $M^{G_F} := X^{G_F}/F^{\times,+}_{\mathfrak{N}p}$ makes sense as a finite type scheme over $\Spec{\Z_p}$, whose geometrically connected components are parametrised by $\mathrm{cl}^+_F(K)$. One has that $M^{G_F}$ is an integral model of the Shimura variety $M^{G_F}_{\Q_p}$ \cite[\S2.3]{TianXiao}.

\begin{remark} \label{remark: Hilbert modular scheme, G*}
    For $\mathfrak{d}_F$ the different ideal of $F$, the Hilbert modular scheme $X^{G_F, \mathfrak{d}_F^{-1}}$ is an integral model of the Shimura variety for the group $G_F^* := G_F \times_{\Res_{F/\Q}\Gm} \Gm$, where the map $G_F \to \Res_{F/\Q}\Gm$ is the determinant map, and $\Gm \to \Res_{F/\Q}\Gm$ is the diagonal embedding.
\end{remark}

Choose a smooth toroidal compactification $\bar{X}^{G_F,\mathfrak{c}}$ of $X^{G_F,\mathfrak{c}}$ for each $\mathfrak{c}$, compatibly so that the multiplication by $x$ action on the polarisation module for $x \in F^{\times, +}_{\mathfrak{N}p}$ induces an isomorphism $\bar{X}^{G_F,\mathfrak{c}}\simeq \bar{X}^{G_F,x\mathfrak{c}}$. The action of $(\det K \cap \Sh{O}_F^{\times, +})/(K\cap \Sh{O}_F^{\times})^2$ on the geometrically connected components of ${X}^{G_F,\mathfrak{c}}$ extends to a free action on the geometrically connected components of $\bar{X}^{G_F,\mathfrak{c}}$ \cite[\S8.1]{andreatta2016adic}, and the finite \'{e}tale quotient $\bar{M}^{G_F} := \bar{X}^{G_F}/F^{\times,+}_{\mathfrak{N}p}$ defines a smooth toroidal compactification of $M^{G_F}$.
We similarly define $X^{G_L}, M^{G_L}$ and their compactified counterparts.

Let $\Sh{A} \to \bar{X}^{G_F}$ be the semi-abelian scheme extending the universal abelian scheme over $X^{G_F}$. Let $e \colon \bar{X}^{G_F} \to \Sh{A}$ be the unit section. Let $\omega_{\Sh{A}} := e^*\Omega^1_{\Sh{A}/\bar{X}^{G_F}}$ be the sheaf of invariant differentials. Let $\Sh{H}_{\Sh{A}}$ be the canonical extension of the relative de Rham sheaf $\Sh{H}^1_{\dR}(\Sh{A}/X^{G_F})$ to $\bar{X}^{G_F}$. We have an exact sequence known as the Hodge filtration:
\[
0 \to \omega_{\Sh{A}} \to \Sh{H}_{\Sh{A}} \to \omega_{\Sh{A}^{\vee}}^{\vee} \to 0.
\]

Recall that $K$ was chosen to be a finite extension of $\Q_p$ that splits $F$ and $L$. Fix any $\Sh{O}_K$-algebra $R$, and base change $\bar{X}^{G_F}$ to $\Spec R$. The modules $\omega_{\Sh{A}}$ and $\Sh{H}_{\Sh{A}}$ are $\Sh{O}_F \otimes \Sh{O}_{\bar{X}^{G_F}}$-modules locally free of rank 1 and 2 respectively. Hence using the distinct embeddings $\sigma \colon \Sh{O}_F \to R \to \Sh{O}_{\bar{X}^{G_F}}$, we get a decomposition of the Hodge filtration as a direct sum over $\Sigma_F$:
\[
0 \to \omega_{\Sh{A},\sigma} \to \Sh{H}_{\Sh{A},\sigma} \to \omega_{\Sh{A}^{\vee},\sigma}^{\vee} \to 0,
\]
where $\omega_{\Sh{A},\sigma}$ and $\Sh{H}_{\Sh{A},\sigma}$ are locally free $\Sh{O}_{\bar{X}^{G_F}}$-modules of rank 1 and 2 respectively.

\begin{definition}
    For a weight ${k} = (k_{\sigma})_{\sigma \in \Sigma_F} \in \Z^{\Sigma_F}$, the sheaf of geometric Hilbert modular forms of weight $k$ is defined as 
    \[
    \omega_{\Sh{A}}^{{k}} := \bigotimes_{\sigma \in \Sigma_F} \omega_{\Sh{A},\sigma}^{k_{\sigma}}.
    \]
    The symmetric ${k}$-th power of $\Sh{H}_{\Sh{A}}$ is defined similarly as
    \[
    \Sym^{{k}}\Sh{H}_{\Sh{A}} := \bigotimes_{\sigma \in \Sigma_F} \Sym^{k_{\sigma}}\Sh{H}_{\Sh{A},\sigma}.
    \]
\end{definition}

Let now $({w},m) \in \Z^{\Sigma_F} \times \Z$ be a weight for the group $G_F$. The sheaf of arithmetic Hilbert modular forms of weight $({w},m)$, denoted by $\omega_{\Sh{A}}^{(w,m)} $, is a sheaf on $\bar{M}^{G_F}$ that is defined by descending the sheaf $\omega_{\Sh{A}}^{2w+mt_F}$ on $\bar{X}^{G_F}$ along the \'{e}tale projection $\bar{X}^{G_F} \to \bar{M}^{G_F}$ as we explain below.

Consider the $\Sh{O}_F \otimes \Sh{O}_{\bar{X}^{G_F}}$-line bundle $\wedge^2_{\Sh{O}_F} \Sh{H}_{\Sh{A}}$. The polarisation on $\bar{X}^{G_F,\mathfrak{c}}$ induces an isomorphism $\omega_{\Sh{A}^{\vee}}^{\vee} \xrightarrow{\sim} \omega_{\Sh{A}}^{\vee} \otimes \mathfrak{c}$ \cite[(1.0.15)]{KatzNicholasM1978pLfC}, which therefore gives a canonical isomorphism $\wedge^2_{\Sh{O}_F}\Sh{H}_{\Sh{A}} = \omega_{\Sh{A}} \otimes_{\Sh{O}_F} \omega^{\vee}_{\Sh{A}^{\vee}} \simeq \omega_{\Sh{A}} \otimes_{\Sh{O}_F} (\omega_{\Sh{A}}^{\vee} \otimes \mathfrak{c}) = \Sh{O}_{\bar{X}^{G_F,\mathfrak{c}}}\otimes \mathfrak{cd}^{-1}$.

For $x \in F^{\times,+}_{\mathfrak{N}p}$, the isomorphism $[x] \colon \bar{X}^{G_F,\mathfrak{c}} \xrightarrow{\sim} \bar{X}^{G_F,x\mathfrak{c}}$ defined by $(A,\iota, \lambda, \alpha_K) \mapsto (A,\iota,x\lambda,\alpha_K)$ induces a canonical isomorphism $[x]^*\wedge^2_{\Sh{O}_F}\Sh{H}_{\Sh{A}} \to \wedge^2_{\Sh{O}_F}\Sh{H}_{\Sh{A}}$ such that the following diagram commutes.
\[\begin{tikzcd}
	{[x]^*(\wedge^2_{\Sh{O}_F}\Sh{H}_{\Sh{A}})} & {\wedge^2_{\Sh{O}_F}\Sh{H}_{\Sh{A}}} \\
	{\Sh{O}_{M^{\mathfrak{c}}}\otimes x\mathfrak{cd}^{-1}} & {\Sh{O}_{M^{\mathfrak{c}}}\otimes \mathfrak{cd}^{-1}}
	\arrow[from=1-1, to=1-2]
	\arrow["\simeq", from=1-1, to=2-1]
	\arrow["\simeq", from=1-2, to=2-2]
	\arrow["{x^{-1}}", from=2-1, to=2-2]
\end{tikzcd}\]
Letting $k = 2w+mt_F$.
This defines a descent datum on the sheaf $\omega_{\Sh{A}}^{k} \otimes (\wedge^2_{\Sh{O}_F}\Sh{H}_{\Sh{A}})^{-w} := \omega_{\Sh{A}}^{k}\otimes \left(\otimes_{\sigma \in \Sigma_F}(\wedge^2 \Sh{H}_{\Sh{A},\sigma})^{-w_{\sigma}}\right)$, and the sheaf $\omega_{\Sh{A}}^{(w,m)}$ of arithmetic Hilbert modular forms of weight $(w,m)$ is defined to be the descent of $\omega_{\Sh{A}}^{k}\otimes (\wedge^2_{\Sh{O}_F} \Sh{H}_{\Sh{A}})^{-w}$ along the \'etale quotient $\bar{X}^{G_F} \to \bar{M}^{G_F}$ with respect to the datum above. We however choose a different formulation of the descent datum that will be easier to extend to the setting of interpolation sheaves later. Note that the canonical trivialisation $\Sh{O}_{\bar{X}^{G_F,\mathfrak{c}}} \otimes \mathfrak{cd}^{-1} \simeq \wedge^2_{\Sh{O}_F} \Sh{H}_{\Sh{A}}$ defines a canonical section $\eta_{\lambda, \mathfrak{c}} \in \wedge^2_{\Sh{O}_F}\Sh{H}_{\Sh{A}}$ as the image of $1$. This gives an isomorphism 
\begin{align*}
    \omega_{\Sh{A}}^{k} \otimes (\wedge^2_{\Sh{O}_F}\Sh{H}_{\Sh{A}})^{-w} &\xrightarrow{\sim} \omega_{\Sh{A}}^{k} \\
    f &\mapsto \left[\tilde{f} \colon (A,\iota,\lambda,\alpha_K,\omega) \mapsto f(A,\iota, \lambda, \alpha_K, \omega, \eta_{\lambda,\mathfrak{c}})\right].
\end{align*}
Note that $\eta_{x\lambda, x\mathfrak{c}} = x^{-1}\eta_{\lambda, \mathfrak{c}}$. Therefore the descent datum above can be translated into a descent datum on the sheaf $\omega_{\Sh{A}}^{k}$, which can be described as the following action of $F^{\times, +}_{\mathfrak{N}p}$.
\begin{align*}
    F^{\times,+}_{\mathfrak{N}p} \times \omega_{\Sh{A}}^{k} &\xrightarrow{} \omega_{\Sh{A}}^{k} \\
    (x, f) &\mapsto \left[ (x \ast f) \colon (A, \iota, \lambda, \alpha_K, \omega) \mapsto x^{w}f(A, \iota, x\lambda, \alpha_K, \omega)\right].
\end{align*}
It is easy to see that $(K\cap \Sh{O}_F^{\times})^2$ acts trivially. Indeed, for an element $\epsilon \in K \cap \Sh{O}_F^{\times}$, pullback along the isomorphism $[\epsilon^{-1}] \colon \Sh{A} \to \Sh{A}$ induces an isomorphism of tuples $(A, \iota, \lambda, \alpha_K, \epsilon \omega) \simeq (A, \iota, \epsilon^2\lambda, \alpha_K, \omega)$. Hence $(\epsilon^2\cdot f)(A, \iota, \lambda, \alpha_K, \omega) = \epsilon^{2w}f(A, \iota, \epsilon^2\lambda, \alpha_K, \omega) = \epsilon^{2w}f(A, \iota, \lambda, \alpha_K, \epsilon\omega) = \epsilon^{-mt_F}f(A, \iota, \lambda, \alpha_K, \omega) = f(A, \iota, \lambda, \alpha_K, \omega)$. Here we assume that either $\mathrm{N}_{F/\Q}(K \cap \Sh{O}_F^{\times}) = \{1\}$ or $m \in 2\Z$ because otherwise the notion of arithmetic Hilbert modular forms is empty.
\begin{definition}
    Let $p \colon \bar{X}^{G_F} \to \bar{M}^{G_F}$ be the projection. We define the sheaf of arithmetic Hilbert modular forms of weight $(w,m)$ to be $\omega_{\Sh{A}}^{(w,m)} := (p_*\omega_{\Sh{A}}^{k})^{F^{\times, +}_{\mathfrak{N}p}}$, i.e. the sheaf of invariant sections for the action of $F^{\times, +}_{\mathfrak{N}p}$ defined above.
\end{definition}

\subsection{Twisted triple product \texorpdfstring{$L$}{L}-function}
Here we recall the classical $L$-function following \cite[\S3]{blanco-chacón_fornea_2020}.

Let $(v,n) \in \Z[\Sigma_L] \times \Z$ and $(w,m) \in \Z[\Sigma_F] \times \Z$ be two classical weights. Consider primitive eigenforms $g \in S^{G_L}(N_g, (v,n), \bar{\Q})$ and $f \in S^{G_F}(N_f, (w,m), \bar{\Q})$ generating irreducible cuspidal automorphic representations $\pi, \sigma$ of $G_L(\A{}{})$ and $G_F(\A{}{})$ respectively. Here $S^{G_L}(N_g, (v,n), \bar{\Q})$ denotes the space of \emph{arithmetic} Hilbert cuspforms for the group $G_L$ of $\Gamma_1(N_g)$-level and weight $(v,n)$ with coefficients in $\bar{\Q}$. The $\bar{\Q}$-module $S^{G_F}(N_f, (w,m), \bar{\Q})$ is to be understood similarly. Let $\pi^u = \pi \otimes | \cdot |_{\A{L}{}}^{n/2}$ and $\sigma^u = \sigma \otimes | \cdot |_{\A{F}{}}^{m/2}$ be the unitarisation of $\pi$ and $\sigma$. Consider the representation $\Pi = \pi^u \otimes \sigma^u$. Let $\rho \colon \Gal(\bar{F}/F) \to S_3$ be the homomorphism mapping the absolute Galois group of $F$ to the symmetric group over 3 elements associated to the \'{e}tale cubic algebra $L \times F/F$. Let $G_{L\times F} = \Res_{L/F}\GL{2,L} \times \GL{2,F}$. The $L$-group ${}^{L}(G_{L\times F})$ is given by the semi-direct product of $\hat{G} \rtimes \Gal(\bar{F}/F)$, where $\Gal(\bar{F}/F)$ acts on $\hat{G} = \GL{2}(\C)^{3}$ through $\rho$. We will assume the central character of $\Pi$ is trivial when restricted to $\A{F}{\times}$.

\begin{definition}
    The twisted triple product $L$-function associated with the unitary automorphic representation $\Pi$ is given by the Euler product
    \[
    L(s,\Pi, \mathrm{R}) = \prod_{v} L_v(s, \Pi_v, \mathrm{R})^{-1},
    \]
    where $\Pi_v$ is the local representation at the place $v$ of $F$ appearing in the restricted tensor product decomposition $\Pi = \otimes'_v \Pi_v$, and $\mathrm{R}$ is the representation of the $L$-group acting on $\C^2 \otimes \C^2 \otimes \C^2$ as follows: $\hat{G}$ acts via the natural action, and $\Gal(\bar{F}/F)$ acts by permuting the vectors through $\rho$.
\end{definition}

\begin{definition}
    Let $(v,n) \in \Z[\Sigma_L] \times \Z$ and $(w,m) \in \Z[\Sigma_F] \times \Z$ be two classical weights. We say the pair $(v,n)$ and $(w,m)$ is $F$-dominated if there exists $r \in \N[\Sigma_L]$ such that $w = (v+r)_{|F}$, and $mt_F = (nt_L)_{|F}$ (or equivalently, $m = 2n$).
\end{definition}

For $\phi \in \Pi$, consider the period integral $I(\phi) := \int_{\A{F}{\times}\GL{2}(F)\backslash\GL{2}(\A{F}{})}\phi(x)\mathrm{d}x$.

\begin{prop}\label{P203}
    Let $\eta \colon \A{F}{\times} \to \C^{\times}$ be the quadratic character attached to $L/F$ by class field theory. Then the following are equivalent:
    \begin{enumerate}
        \item The central $L$-value $L(\frac{1}{2}, \Pi, \mathrm{R})$ does not vanish, and for every place $v$ of $F$ the local $\epsilon$-factor satisfies $\epsilon_v(\frac{1}{2}, \Pi_v, \mathrm{r})\eta_v(-1) = 1$.
        \item There exists a vector $\phi \in \Pi$, called a test vector, whose period integral $I(\phi)$ does not vanish.
    \end{enumerate}
    Moreover, in this situation, 
    \[
    L\left(\frac{1}{2}, \Pi, \mathrm{R}\right) = (\ast)\cdot |I(\phi)|^2
    \]
    where $(\ast)$ is some non-zero constant.
\end{prop}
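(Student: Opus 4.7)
The plan is to derive this as a combination of two well-established inputs: Ichino's integral formula for the twisted triple product, and Prasad's local dichotomy for trilinear forms on $\mathrm{GL}_2$. Ichino's formula expresses, for a factorisable test vector $\phi = \otimes_v \phi_v \in \Pi$, the ratio
\[
\frac{|I(\phi)|^2}{\langle \phi, \phi\rangle} = C \cdot \frac{L\!\left(\tfrac{1}{2}, \Pi, \mathrm{R}\right)}{L(1, \Pi, \mathrm{Ad})} \cdot \prod_v \frac{I_v(\phi_v)}{\langle \phi_v, \phi_v\rangle_v},
\]
where $C$ is an explicit non-zero constant, the adjoint $L$-value $L(1, \Pi, \mathrm{Ad})$ is non-zero by Jacquet--Shalika, and each $I_v$ is the local $\mathrm{PGL}_{2}(F_v)$ matrix-coefficient trilinear form. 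The alternative complement input, due to Prasad (with the necessary extensions by Loke and Harris--Kudla), asserts that $\dim \mathrm{Hom}_{\mathrm{PGL}_2(F_v)}(\Pi_v, \C) \le 1$, with equality exactly when $\epsilon_v(\tfrac{1}{2}, \Pi_v, \mathrm{R})\eta_v(-1) = 1$; and when this dimension is one, local vectors $\phi_v$ can be chosen so that $I_v(\phi_v) \neq 0$.

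For the implication $(1) \Rightarrow (2)$, I would argue that the hypothesis $\epsilon_v(\tfrac{1}{2}, \Pi_v, \mathrm{R})\eta_v(-1) = 1$ at every place, via Prasad, guarantees the existence of a local vector $\phi_v$ with $I_v(\phi_v) \neq 0$ at each $v$; since $L(\tfrac{1}{2}, \Pi, \mathrm{R}) \neq 0$ by assumption, Ichino's identity then forces $I(\phi) \neq 0$ for $\phi = \otimes_v \phi_v$. Conversely, for $(2) \Rightarrow (1)$, a single non-zero value of $I(\phi)$ makes the left-hand side of Ichino's formula non-zero, hence $L(\tfrac{1}{2}, \Pi, \mathrm{R}) \neq 0$ and each local integral $I_v(\phi_v) \neq 0$, which in particular means $\mathrm{Hom}_{\mathrm{PGL}_2(F_v)}(\Pi_v, \C) \neq 0$; by Prasad this is precisely the local $\epsilon$-condition. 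The final identity $L(\tfrac{1}{2}, \Pi, \mathrm{R}) = (\ast) \cdot |I(\phi)|^2$ is then a repackaging of Ichino's formula for a fixed choice of factorisable test vector, with $(\ast)$ collecting the non-zero local trilinear form values together with the inverse adjoint $L$-value.

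The main obstacle is of course Ichino's formula itself, whose proof proceeds via a seesaw identity for the theta correspondence between $\mathrm{PGL}_2 \times \mathrm{PGL}_2 \times \mathrm{PGL}_2$ and an appropriate four-dimensional orthogonal group $\mathrm{PGSO}(V)$, combined with the Rallis inner product formula and the Siegel--Weil identity. In the twisted triple product setting, the assumption that the central character of $\Pi$ is trivial on $\A{F}{\times}$ is exactly what makes the period integral $I(\phi)$ well-defined and lands us in the situation treated by Ichino. Since this is an established theorem I would simply invoke it, together with Prasad's theorem, and carry out the straightforward combinatorial verification outlined above; the paper under discussion would also cite these results directly rather than reproving them.
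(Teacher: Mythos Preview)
Your proposal is correct. The paper itself does not give an argument here: its entire proof is a citation to \cite[Theorem~3.2]{blanco-chacón_fornea_2020}. What you have written is a faithful unpacking of what lies behind that reference, namely Ichino's trilinear period formula combined with the Prasad--Loke local dichotomy for $\mathrm{Hom}_{\mathrm{PGL}_2(F_v)}(\Pi_v,\C)$, and your sketch of both implications is accurate. Since you yourself anticipate that the paper would cite rather than reprove these inputs, there is no discrepancy to flag.
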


\begin{proof}
    \cite[Theorem 3.2]{blanco-chacón_fornea_2020}.
\end{proof}

\begin{remark}
    In \cite[1963]{blanco-chacón_fornea_2020} the authors provide sufficient criteria on $g$ and $f$ for the equivalent conditions of Proposition \ref{P203} to hold. If the weights $(v,n)$ and $(w,m)$ are $F$-dominated, then the local $\epsilon$-factors at archimedean places satisfy the hypothesis of the proposition. The same is true for the finite places if we assume that $\mathrm{N}_{L/F}(N_g)\cdot d_{L/F}$ and $N_f$ are coprime, and that every prime dividing $N_f$ splits in $L$. Here $d_{L/F}$ is the discriminant.
\end{remark}

\begin{assumption}
    Assume that the equivalent conditions in Proposition \ref{P203} hold, and the weights $(v,n)$ and $(w,m)$ of $g$ and $f$ respectively are $F$-dominated.
\end{assumption}

Let $\mathfrak{A}$ be an ideal in $\Sh{O}_F$. Denote by $K_{11}(\mathfrak{A})$ the following open compact group.
\[
K_{11}(\mathfrak{A}) = \left\{ \begin{pmatrix}
    a & b \\
    c & d
\end{pmatrix} \in \GL{2}(\hat{\Sh{O}}_F)\,|\, a\equiv d\equiv 1, c\equiv 0 \text{ mod } \mathfrak{A} \right\}.
\]

Let $\mathfrak{J}$ be the element
\[
{\begin{pmatrix}
-1 & 0 \\
0 & 1
\end{pmatrix}}^{\Sigma_F} \in \GL{2}(\R)^{\Sigma_F}.
\]
For any $h \in \sigma^u$, we define $h^{\mathfrak{J}} \in \sigma^u$ to be the vector obtained by right translation $h^{\mathfrak{J}}(x) = h(x\mathfrak{J})$. 

Let $\delta$ be the Maass--Shimura differential operator \cite[1956]{blanco-chacón_fornea_2020} on nearly holomorphic cuspforms. 

The natural inclusion $\GL{2}(\A{F}{}) \to \GL{2}(\A{L}{})$ defines by composition a \emph{diagonal restriction} map \[\zeta^* \colon S^{G_L}(K_{11}(\mathfrak{A}\Sh{O}_L), (v,n), \C{}) \to S^{G_F}(K_{11}(\mathfrak{A}), (v_{|F}, 2n), \C{}).\]

\begin{lemma}\label{L206}
Let $r \in \N[\Sigma_L]$ be such that $w = (v+r)_{|F}$. Then there is an $\Sh{O}_F$ ideal $\mathfrak{A}$ supported on a subset of the prime factors of $N_f\cdot\mathrm{N}_{L/F}(N_g)\cdot d_{L/F}$ such that a test vector $\phi$ as in the statement of Proposition \ref{P203} can be chosen to be of the form $\phi = (\delta^r(\breve{g}))^u \otimes (\breve{f}^{\mathfrak{J}})^u$ for $\breve{g} \in S^{G_L}(K_{11}(\mathfrak{A}\Sh{O}_L), (v,n), \bar{\Q})$ and $\breve{f} \in S^{G_F}(K_{11}(\mathfrak{A}), (w,m), \bar{\Q})$. The cuspforms $\breve{g}, \breve{f}$ are eigenforms for all Hecke operators outside $N_f\cdot\mathrm{N}_{L/F}(N_g)\cdot d_{L/F}$ with the same eigenvalues of $g$ and $f$ respectively. Moreover, in this case the period integral can be expressed as
\[
I(\phi) = \int_{[\GL{2}(\A{F}{})]}(\delta^r\breve{g})^u \otimes (\breve{f}^{\mathfrak{J}})^u\mathrm{d}x = \langle \zeta^*(\delta^r \breve{g}), \breve{f}^*\rangle
\]
where $\breve{f}^*$ is the cuspform in $S^{G_F}(K_{11}(\mathfrak{A}), (w,m), \bar{\Q})$ whose adelic $q$-expansion coefficients are complex conjugates of those of $\breve{f}$. Here $\langle \phantom{e}, \phantom{e} \rangle$ is the Petersson inner product.
\end{lemma}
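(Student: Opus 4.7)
The plan is to combine Ichino's local--global principle for triple product periods with a $K_\infty$-type matching at archimedean places and an unfolding argument to recover the Petersson pairing. By Ichino's period formula, for any decomposable $\phi = \otimes_v \phi_v$ one has $|I(\phi)|^2 = L(1/2,\Pi,\mathrm{R})\cdot\prod_v I_v(\phi_v)$ up to explicit constants, and under the equivalent conditions of Proposition \ref{P203} every local functional $I_v$ is non-zero on the corresponding local triple product. It therefore suffices to exhibit, at each place $v$ of $F$, a test vector $\phi_v$ with $I_v(\phi_v)\neq 0$ of the prescribed arithmetic form. At places $v\nmid N_f\cdot \mathrm{N}_{L/F}(N_g)\cdot d_{L/F}$ the normalised spherical vectors already suffice. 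At the remaining finite places, local new-vector theory for $K_{11}$-type congruence subgroups produces such $\phi_v$ as images of vectors invariant under $K_{11}(\mathfrak{A}_v\Sh{O}_L)$ (on the $L$-side) and $K_{11}(\mathfrak{A}_v)$ (on the $F$-side), where $\mathfrak{A}_v$ is an appropriate power of the residual ideal at $v$; setting $\mathfrak{A} := \prod_v \mathfrak{A}_v$ gives the ideal of the statement. Because these local adjustments are obtained by applying Hecke operators supported at the bad primes, the resulting forms $\breve g$ and $\breve f$ remain $\bar{\Q}$-rational eigenvectors for the Hecke algebra away from $N_f\cdot\mathrm{N}_{L/F}(N_g)\cdot d_{L/F}$ with the same eigenvalues as $g$ and $f$.

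The archimedean contribution is handled by the $F$-dominance hypothesis $w = (v+r)_{|F}$, $m = 2n$, which is precisely what is needed to match $K_\infty$-types across the diagonal. Applying $\delta^r$ raises the $L$-weight of $\breve g$ from $v$ to $v+2r$, and diagonal restriction along $\GL{2}(F\otimes \R)\hookrightarrow \GL{2}(L\otimes \R)$ produces a form of weight $w = (v+r)_{|F}$ on $G_F$, matching the weight of $\breve f$. The right translation by $\mathfrak{J}$ implements the reflection needed for the archimedean Ichino functional to pair a holomorphic vector against an anti-holomorphic one; on the level of Hilbert modular forms, this is the archimedean manifestation of the passage $\breve f \rightsquigarrow \breve f^*$ that replaces Fourier coefficients with their complex conjugates.

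Finally, the Petersson identification follows by unfolding: by definition $\zeta^*$ is restriction of automorphic forms along $\GL{2}(\A{F}{})\hookrightarrow\GL{2}(\A{L}{})$, so the adelic integral over $[\GL{2}(\A{F}{})]$ of $(\delta^r\breve g)^u\otimes(\breve f^{\mathfrak{J}})^u$ is tautologically the Petersson pairing of $\zeta^*(\delta^r\breve g)^u$ against $(\breve f^{\mathfrak{J}})^u$. Undoing the unitarisation normalisation and using the $\mathfrak{J}$-Fourier-conjugation identification from the previous paragraph transforms the second factor into $\breve f^*$, yielding the stated formula. The main obstacle is the local test-vector analysis at the ramified finite primes, where the non-vanishing of $I_v$ on a $K_{11}(\mathfrak{A}_v)$-type vector must be extracted explicitly, compatibly with $\bar{\Q}$-rationality and with preservation of the prime-to-bad Hecke eigenvalues; the archimedean matching and the unfolding to a Petersson pairing are both essentially formal once the $F$-dominance hypothesis is in force.
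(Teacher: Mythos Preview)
The paper does not actually prove this lemma: its entire proof is the citation ``\cite[Lemma 3.4]{blanco-chacón_fornea_2020}''. Your proposal is therefore an attempt to reconstruct the argument behind that reference rather than a comparison with anything in the present paper.

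As a sketch of the underlying argument your outline is broadly on target---Ichino's formula to reduce to local periods, spherical vectors at good primes, local test-vector adjustments at bad primes, archimedean $K_\infty$-type matching via $F$-dominance and $\delta^r$, and unfolding to a Petersson pairing---and this is indeed the strategy of the cited lemma. Two points are looser than a proof should be. First, the claim that ``local new-vector theory for $K_{11}$-type congruence subgroups produces such $\phi_v$'' is exactly the non-trivial content of the bad-prime analysis; you have named it rather than carried it out, and the assertion that the local adjustments are ``obtained by applying Hecke operators supported at the bad primes'' (hence preserve $\bar\Q$-rationality and prime-to-bad eigenvalues) needs justification. Second, the archimedean step is slightly garbled: $\delta^r$ shifts the arithmetic weight from $(v,n)$ to $(v+r,n)$, and diagonal restriction then lands in weight $((v+r)_{|F},2n)=(w,m)$; your sentence about ``$v$ to $v+2r$'' conflates arithmetic and geometric weight conventions. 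The final unfolding to a Petersson pairing and the role of $\mathfrak{J}$ in producing $\breve f^*$ are essentially correct but again stated rather than checked. In short: right shape, but still a sketch; for a self-contained proof you would need to import the local test-vector analysis from \cite{blanco-chacón_fornea_2020} (or the sources it cites) rather than invoke it.
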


\begin{proof}
\cite[Lemma 3.4]{blanco-chacón_fornea_2020}.
\end{proof}

\begin{prop}\label{P2010}
    The value $L^{\text{alg}}(g,f) := \frac{\langle \zeta^*(\delta^r \breve{g}), \breve{f}^*\rangle}{\langle f^*, f^*\rangle}$ is algebraic. We have
    \[
    L\left(\frac{1}{2}, \Pi, \mathrm{R}\right) = (\ast)\cdot|L^{\text{alg}}(g,f)|^2
    \]
    where $(\ast)$ is a constant which can be ensured to be non-zero by suitably choosing the test vectors $\breve{g}, \breve{f}$.
\end{prop}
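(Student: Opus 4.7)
The plan is to combine Proposition \ref{P203} and Lemma \ref{L206} and then normalize the resulting period integral by the Petersson norm of $f^*$. The algebraicity is then a consequence of Shimura's classical algebraicity theorem for ratios of nearly holomorphic Petersson products.

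First I would apply Proposition \ref{P203}, which under our standing assumption gives
\[
L\left(\tfrac{1}{2}, \Pi, \mathrm{R}\right) = (\ast_1)\cdot |I(\phi)|^2
\]
for some non-zero constant $(\ast_1)$, with $\phi$ a test vector. Lemma \ref{L206} then lets me choose $\phi = (\delta^r \breve{g})^u \otimes (\breve{f}^{\mathfrak{J}})^u$ for suitably modified Hecke eigenforms $\breve{g}, \breve{f}$ with the same eigenvalues (away from bad primes) as $g, f$, and identifies the period as $I(\phi) = \langle \zeta^*(\delta^r \breve{g}), \breve{f}^*\rangle$. Substituting yields
\[
L\left(\tfrac{1}{2}, \Pi, \mathrm{R}\right) = (\ast_1)\cdot \bigl|\langle \zeta^*(\delta^r \breve{g}), \breve{f}^*\rangle\bigr|^2.
\]

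Next I would divide and multiply by $|\langle f^*, f^*\rangle|^2 = \langle f^*, f^*\rangle^2$, which is a positive real number (being the Petersson norm squared of a non-zero cuspform), obtaining
\[
L\left(\tfrac{1}{2}, \Pi, \mathrm{R}\right) = (\ast_1)\cdot \langle f^*, f^*\rangle^2 \cdot \bigl|L^{\text{alg}}(g,f)\bigr|^2.
\]
Absorbing $\langle f^*, f^*\rangle^2$ into the constant gives the desired $(\ast) := (\ast_1)\langle f^*, f^*\rangle^2$. Non-vanishing of $(\ast)$ is controlled by non-vanishing of $(\ast_1)$, which is ensured by the test vector choice guaranteed by Proposition \ref{P203} and Lemma \ref{L206}, together with strict positivity of $\langle f^*, f^*\rangle$.

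It remains to justify algebraicity of $L^{\text{alg}}(g,f)$. Since $g, f$ have coefficients in $\bar{\Q}$ and $\breve{g}, \breve{f}$ are obtained from $g, f$ by bad-prime modifications that preserve the $\bar{\Q}$-rationality of Fourier coefficients, the Maass--Shimura operator $\delta^r$ and the diagonal restriction $\zeta^*$ both preserve $\bar{\Q}$-rational nearly holomorphic structures. Shimura's algebraicity theorem for Petersson ratios of nearly holomorphic Hilbert cuspforms (applied to the cohomological weight $(w,m)$ for $G_F$) then shows that $\frac{\langle \zeta^*(\delta^r \breve{g}), \breve{f}^*\rangle}{\langle f^*, f^*\rangle}$ lies in $\bar{\Q}$.

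The main technical obstacle is the careful choice of test vectors $\breve{g}, \breve{f}$ so that the local factor $(\ast_1)$ is non-zero at every place; this is precisely the content of the sufficient criteria from \cite{blanco-chacón_fornea_2020} referenced in the remark following Proposition \ref{P203} (coprimality of $\mathrm{N}_{L/F}(N_g)\cdot d_{L/F}$ with $N_f$, and split behaviour of primes dividing $N_f$ in $L$), and we invoke these criteria implicitly via Lemma \ref{L206}. Everything else is routine bookkeeping.
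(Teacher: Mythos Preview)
Your argument is correct and follows essentially the same route as the paper: the second statement is deduced from Ichino's formula (Proposition~\ref{P203}) together with the identification of the period integral in Lemma~\ref{L206}, and the algebraicity is obtained by invoking Shimura's theorem on ratios of Petersson products of nearly holomorphic Hilbert forms, which is exactly what the cited \cite[Proposition~3.5]{blanco-chacón_fornea_2020} does.
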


\begin{proof}
    The first statement is \cite[Proposition 3.5]{blanco-chacón_fornea_2020}. The second statement follows from the proof of Proposition \ref{P203} using Ichino's formula.
\end{proof}

\subsection{Families of (nearly) overconvergent Hilbert modular forms}
Let us now assume that $N_g, N_f$ are coprime to $p$. Let $\bar{\Sh{M}}^{G_L}_{\nu} \to \Sp{\Lambda_g}$ be an overconvergent neighbourhood of the ordinary locus in a smooth toroidal compactification of the rigid analytic Shimura variety $\Sh{M}^{G_L}$ of level $K_{11}(\mathfrak{A})$ defined by the inequality $|\mathrm{Ha}|^{p^{\nu+1}} \geq |p|$, where $\mathrm{Ha}$ denotes (local lifts of) the Hasse invariant, and $\nu \in \N$. One of the main results of \cite{twisted_tripleL_finite_slope} is the following.

\begin{theorem}\label{L2011}
    There exists a Banach sheaf $\mathbb{W}^{G_L}_{k+2r}(-D)$ over $\bar{\Sh{M}}^{G_L}_{\nu}$ interpolating nearly overconvergent cuspforms of weight $k+2r$, which is equipped with $(\mathrm{i})$ the $U$-operator that acts compactly on the space of sections, and $(\mathrm{ii})$ the Gauss--Manin connection $\nabla$ induced by the Gauss--Manin connection on $\Sh{H}_{\Sh{A}};$ such that for any overconvergent family of cuspforms $\breve{\omega}_\mathbf{g}^{[\Sh{P}]}$ of weight $k$, where $( \cdot )^{[\Sh{P}]}$ denotes the $\mathfrak{P}_i$-depletion at all primes $\mathfrak{P}_i | p$ in $\Sh{O}_L$,  $\nabla^r(\breve{\omega}_{\mathbf{g}^{[\Sh{P}]}})$ makes sense as a section of $\mathbb{W}^{G_L}_{k+2r}(-D)$ for sufficiently large $\nu$, assuming $k$ and $r$ are sufficiently analytic families of weights.
\end{theorem}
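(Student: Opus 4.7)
The plan is to construct $\mathbb{W}^{G_L}_{k+2r}(-D)$ in three stages: an overconvergent sheaf of arithmetic weight $k+2r$, its enlargement carrying nearly-holomorphic data interpolating degree up to $r$, and the verification that $\nabla^{r}$ on a $p$-depleted overconvergent form lands in it.

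First, I would follow the Andreatta--Iovita--Pilloni strategy. Over $\bar{\Sh{M}}^{G_L}_{\nu}$, where the canonical subgroup of level $p^{\nu+1}$ of $\Sh{A}$ exists, one trivialises the dual canonical subgroup on a partial Igusa tower to obtain an $m$-analytic overconvergent refinement of $\omega_{\Sh{A}}^{k+2r}$ for the universal character. To accommodate the nearly-holomorphic degree, one uses that on the ordinary locus the Hodge filtration $0\to\omega_{\Sh{A}}\to\Sh{H}_{\Sh{A}}\to\omega_{\Sh{A}^{\vee}}^{\vee}\to 0$ admits the canonical unit-root splitting, and that this splitting extends via Dwork-style convergence throughout $\bar{\Sh{M}}^{G_L}_{\nu}$ after possibly shrinking. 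The resulting decreasing filtration on $\Sym^{k+2r}\Sh{H}_{\Sh{A}}$, whose graded pieces are controlled by geometric weights of the form $k+2r-2j$, can then be $p$-adically interpolated in the Coleman-family sense across the universal weight character; twisting by the ideal of the toroidal boundary divisor $D$ enforces cuspidality and produces $\mathbb{W}^{G_L}_{k+2r}(-D)$.

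Second, I would equip this sheaf with its operators. The $U$-operator arises from the canonical-subgroup Hecke correspondence, which strictly improves the level of overconvergence and hence acts compactly on sections. The Gauss--Manin connection $\nabla$ on $\Sh{H}_{\Sh{A}}$ extends functorially to $\Sym^{k+2r}\Sh{H}_{\Sh{A}}$; by Griffiths transversality it raises the filtration step by one, and this behaviour extends by continuity to the interpolation Banach sheaf when the weight character varies sufficiently analytically. An overconvergent form $\breve{\omega}_{\mathbf{g}}$ of weight $k$ lies in the deepest filtration step, so classically $\nabla^{r}$ produces a section of nearly degree $r$; however, iterating $\nabla$ on a non-depleted form would risk losing strict overconvergence due to poles accumulating along the supersingular divisor. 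The $p$-depletion $(\,\cdot\,)^{[\Sh{P}]}$ removes precisely those $q$-expansion coefficients responsible for these poles, so $\nabla^{r}(\breve{\omega}_{\mathbf{g}}^{[\Sh{P}]})$ remains in $\mathbb{W}^{G_L}_{k+2r}(-D)$ for sufficiently large $\nu$.

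The main obstacle is the construction of the Banach sheaf together with a compact $U$-action in the inert case, where the canonical subgroup is a single rank-$p^{2}$ subgroup scheme rather than a product of rank-$p$ subgroups at primes above $p$. The normalisation of the Hasse invariant, the partial Igusa tower, and the $m$-analyticity condition on the universal character must all be adapted accordingly, and the unit-root splitting has to be controlled uniformly over inert primes. Once these foundations are in place, the compatibility of $\nabla$ with the interpolation and the preservation of overconvergence under iteration on $p$-depleted families reduce to the Kodaira--Spencer formalism and standard Coleman-style $q$-expansion computations.
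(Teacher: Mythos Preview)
Your outline follows the general Andreatta--Iovita--Pilloni philosophy, but there is a genuine gap at the construction step. You write that the unit-root splitting of the Hodge filtration ``extends via Dwork-style convergence throughout $\bar{\Sh{M}}^{G_L}_{\nu}$ after possibly shrinking.'' This is false: the unit-root splitting is a purely ordinary phenomenon and does \emph{not} extend to any strict neighbourhood of the ordinary locus. The paper (following \cite{twisted_tripleL_finite_slope}) avoids this by a different mechanism. One first replaces $\Sh{H}_{\Sh{A}}$ by an integral modification $\Sh{H}^{\sharp}_{\Sh{A}}$ which admits only a \emph{canonical splitting modulo} $p\,\mathrm{Hdg}^{-p^2}$; the kernel of the resulting retraction defines a marked quotient $\Sh{Q}$. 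One then builds the formal $\Sh{O}_L$-vector bundle $\mathbb{V}^{\Sh{O}_L}_0(\Sh{H}^{\sharp}_{\Sh{A}}, s, \Sh{Q})$ of functionals congruent to the marked section $s$ modulo $\beta_n$ and vanishing on $\Sh{Q}$ modulo $\eta$, and defines $\mathbb{W}_{k^0}$ as the $k^0$-eigenspace of its structure sheaf for the $\mathfrak{T}^{\mathrm{ext}}$-action. This ``vector bundle with marked sections'' construction is the actual replacement for a nonexistent overconvergent unit-root splitting; your proposal to interpolate the filtration on $\Sym^{k+2r}\Sh{H}_{\Sh{A}}$ via an extended splitting cannot work as stated.

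There is also a misconception about the role of $\Sh{P}$-depletion. You say depletion ``removes precisely those $q$-expansion coefficients responsible for these poles'' along the supersingular divisor. That is not why depletion is needed. The issue is that for a $p$-adic (non-integral) exponent $r$, the operator $d_i^{r}$ on $q$-expansions sends $a_\beta q^\beta$ to $\sigma_i(\beta)^{r} a_\beta q^\beta$, and $\sigma_i(\beta)^{r}$ is only defined when $\sigma_i(\beta)$ is a $p$-adic unit. Depletion at all $\mathfrak{P}_i\mid p$ kills exactly the coefficients with $\sigma_i(\beta)$ non-unit, so that the iteration formula (the analogue of equation~(\ref{equation: iteration}) in the paper) converges. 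The possible loss of overconvergence radius is a separate matter, handled by allowing $\nu$ to increase; it is not cured by depletion.
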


In the following we will briefly recall the definition of the sheaf $\mathbb{W}^{G_L}_{k}$ (and its cuspidal counterpart $\mathbb{W}^{G_L}_{k}(-D)$) for any locally analytic character $k \colon (\Sh{O}_L \otimes \Z_p)^{\times} \to R^{\times}$ to a $p$-adically complete flat $\Sh{O}_K\llbracket (\Sh{O}_L \otimes \Z_p)^{\times}\rrbracket$-algebra $R$. We will also recall local descriptions of these sheaves, and describe the Gauss--Manin connection in terms of these local descriptions at the cusps, which correspond to Tate objects as explained in \cite[\S5.3]{twisted_tripleL_finite_slope}. We won't recall any details about the $U$-operator -- it will be enough to know that it exists and is compact, for which the reader can refer \cite[Lemma 6.2]{twisted_tripleL_finite_slope}.

Let $\bar{\mathfrak{X}}^{G_L}$ be the $p$-adic completion of $\bar{X}^{G_L}$ and let $\bar{\Sh{X}}^{G_L}$ be its rigid generic fibre. Fix $\nu \in \N$, and let $\Xrig{L}{\nu}$ be the overconvergent neighbourhood of the ordinary locus in $\Xrig{L}{}$ defined by $|\mathrm{Ha}|^{p^{\nu}+1} \geq |p|$. Let $\Xfor{L}{\nu}$ be its formal model given by blowing up $\Xfor{L}{}$ along the ideal locally given by $(\Hdg{p^{\nu}+1},p)$ and taking the maximal open where $(\Hdg{\nu},p)$ is generated by $\Hdg{p^{\nu}+1}$. Here $\Hdg{p^{\nu}+1}$ is a local lift of $\mathrm{Ha}^{p^{\nu}+1}$ evaluated at a local basis of $\omega_{\Sh{A}}$. The theory of canonical subgroups ensures that over $\Xfor{L}{\nu}$, the semiabelian scheme $\Sh{A}$ admits the existence of the canonical subgroup $H^{\can}_n \subset \Sh{A}[p^n]$ of level $n$ for any $n \leq \nu$ \cite[Appendice A]{Andreatta2018leHS}. Let $\Sh{IG}_n = \underline{\Isom}(\Sh{O}_L/p^n, H^{\can, D}_n) \to \Xrig{L}{\nu}$ be the torsor of trivialisations of the dual of $H^{\can}_n$, and let $\mathfrak{IG}_n$ be the normalisation of $\Xfor{L}{\nu}$ in $\Sh{IG}_n$.

\begin{prop}
\begin{enumerate}
    \item There exists an integral model $p\omega_{\Sh{A}} \subset \Omega_{\Sh{A}} \subset \omega_{\Sh{A}}$ of $\omega_{\Sh{A}}$ over $\mathfrak{IG}_n$ that is equipped with a marked generator $s \colon \Sh{O}_{\Xfor{L}{\nu}}/p^n\mathrm{Hdg}^{-\frac{p^n}{p-1}} \xrightarrow{\sim} \Omega_{\Sh{A}}/p^n\mathrm{Hdg}^{-\frac{p^n}{p-1}}$ given by the image of the universal point $P^{\mathrm{univ}} \in H_n^{\can,D}(\mathfrak{IG}_n)$ under the linearisation of the Hodge--Tate map 
    \[
    HT_n \colon H_n^{\can, D} \otimes \Sh{O}_{\mathfrak{IG}_n} \to \omega_{H_n} \simeq \omega_{\Sh{A}}/p^n\mathrm{Hdg}^{-\frac{p^n-1}{p-1}}.
    \]
    \item Let $\xi$ be the $\Sh{O}_L \otimes \Sh{O}_{\mathfrak{IG}_n}$ invertible ideal such that $\Omega_{\Sh{A}} = \xi\omega_{\Sh{A}}$. Let $\widetilde{HW}$ be the $\Sh{O}_L \otimes \Sh{O}_{\mathfrak{IG}_n}$ invertible ideal generated locally by lifts of the Hasse--Witt matrix defining $V^* \colon \omega_{\Sh{A}} \to \omega_{\Sh{A}^{(p)}}$ on the special fibre. There exists an integral model $p\Sh{H}_{\Sh{A}} \subset \Sh{H}^{\sharp}_{\Sh{A}} \subset \Sh{H}_{\Sh{A}}$ of $\Sh{H}_{\Sh{A}}$ that sits in the following exact sequence induced by the Hodge filtration.
    \[
    0 \to \Omega_{\Sh{A}} \to \Sh{H}^{\sharp}_{\Sh{A}} \to \xi\widetilde{HW}\cdot\omega^{\vee}_{\Sh{A}} \to 0.
    \]
    Moreover, the short exact sequence above admits a canonical $\Sh{O}_L$-linear splitting modulo $p\Hdg{-p^2}$, and we denote by $\Sh{Q}$ the kernel of the retraction $\psi \colon \Sh{H}^{\sharp}_{\Sh{A}}/p\Hdg{-p^2} \to \Omega_{\Sh{A}}/p\Hdg{-p^2}$.
\end{enumerate}
\end{prop}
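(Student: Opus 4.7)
The proposition recapitulates the standard construction of the AIP-type integral models used in the interpolation of nearly overconvergent forms (adapted to the Hilbert case in \cite{andreatta2016adic}); both parts descend from the canonical subgroup theory alluded to above via \cite{Andreatta2018leHS}. My plan is to treat the two parts in parallel, since each amounts to modifying a coherent sheaf on $\Xfor{L}{\nu}$ using the image of the Hodge--Tate map of $H_n^{\can}$.

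For part \textbf{(1)}, the key input is the linearised Hodge--Tate morphism
\[
HT_n \colon H_n^{\can, D} \otimes \Sh{O}_{\mathfrak{IG}_n} \longrightarrow \omega_{\Sh{A}}/p^n\Hdg{-\frac{p^n-1}{p-1}},
\]
together with the standard fact (Fargues) that its cokernel is annihilated by $\Hdg{\frac{1}{p-1}}$. Over the Igusa tower the tautological trivialisation $P^{\mathrm{univ}}$ of $H_n^{\can, D}$ provides a canonical element, and its image under $HT_n$ is the claimed marked section $s$. I would then \emph{define} $\Omega_{\Sh{A}} \subset \omega_{\Sh{A}}$ as the $\Sh{O}_L \otimes \Sh{O}_{\mathfrak{IG}_n}$-sub-line-bundle locally generated by an arbitrary lift of $s$. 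Two such lifts differ by an element of $p^n\Hdg{-\frac{p^n-1}{p-1}}\omega_{\Sh{A}}$, and the cokernel estimate above shows that this sub-line-bundle is independent of the choice of lift; the inequality $|\mathrm{Ha}|^{p^{\nu+1}} \geq |p|$ cutting out $\Xfor{L}{\nu}$ then forces $p\omega_{\Sh{A}} \subset \Omega_{\Sh{A}}$ for all $n \leq \nu$, and the isomorphism displayed in the proposition is tautological from the definition.

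For part \textbf{(2)}, my plan is to construct $\Sh{H}^{\sharp}_{\Sh{A}}$ as the preimage in $\Sh{H}_{\Sh{A}}$ of the $\Sh{O}_L \otimes \Sh{O}_{\mathfrak{IG}_n}$-subsheaf $\xi\widetilde{HW} \cdot \omega_{\Sh{A}^{\vee}}^{\vee}$ of the Hodge quotient, the factor $\widetilde{HW}$ encoding the Hasse--Witt information of $\Sh{A}$ in characteristic $p$. The stated short exact sequence is then obtained by a direct diagram chase combining this definition with $\Omega_{\Sh{A}} = \xi\omega_{\Sh{A}}$ from (1). The main obstacle is producing the canonical $\Sh{O}_L$-linear splitting modulo $p\Hdg{-p^2}$. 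The plan is to obtain it from the crystalline Frobenius $\phi$ acting on $\Sh{H}^{\sharp}_{\Sh{A}}$: the insertion of $\widetilde{HW}$ is precisely what makes $\phi$ act invertibly on the quotient $\Sh{H}^{\sharp}_{\Sh{A}}/\Omega_{\Sh{A}}$ to the required precision, while $\phi$ acts with divisible image on $\Omega_{\Sh{A}}$; one Frobenius iteration then produces a canonical retraction $\psi$. The delicate point — and the step that really has to be verified rather than sketched — is that a single Frobenius step introduces denominators of exactly $\Hdg{-p^2}$ (and no worse), which amounts to an explicit tracking of the Hasse invariant through the Frobenius lift on the canonical subgroup.
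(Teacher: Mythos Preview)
The paper does not actually prove this proposition; its entire proof is the citation ``This is the content of \cite[\S4.1]{twisted_tripleL_finite_slope}.'' There is therefore no in-paper argument to compare against.

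Your sketch is nonetheless a correct outline of the standard construction. Part (1) is exactly as in the Andreatta--Iovita--Pilloni framework: one defines $\Omega_{\Sh{A}}$ as the $\Sh{O}_L$-sub-line-bundle of $\omega_{\Sh{A}}$ generated by any lift of $HT_n(P^{\mathrm{univ}})$, and the Fargues cokernel bound makes this well-defined and forces the chain $p\omega_{\Sh{A}} \subset \Omega_{\Sh{A}} \subset \omega_{\Sh{A}}$. For part (2), defining $\Sh{H}^{\sharp}_{\Sh{A}}$ as a preimage in $\Sh{H}_{\Sh{A}}$ is correct. For the splitting, your Frobenius/unit-root argument is one valid route; the approach more commonly taken in \cite{andreatta2021triple} and its Hilbert-modular descendants is instead to use the Hodge--Tate map of the full $\Sh{A}[p]$ (equivalently, of the quotient $\Sh{A}[p]/H_1^{\can}$), which produces the complementary line $\Sh{Q}$ directly without iterating Frobenius. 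Both routes yield the same splitting to the precision $p\Hdg{-p^2}$ and involve the same Hasse-invariant bookkeeping you correctly flag as the delicate step.
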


\begin{proof}
    This is the content of \cite[\S4.1]{twisted_tripleL_finite_slope}.
\end{proof}

Let $\beta_n = p^n\Hdg{-\frac{p^n}{p-1}}$, and $\eta = p\Hdg{-p^2}$.
Let $\mathbb{V}(\Sh{H}^{\sharp}_{\Sh{A}}) \to \mathfrak{IG}_n$ be the geometric vector bundle parametrising $\Sh{O}_L$-linear functionals on $\Sh{H}^{\sharp}_{\Sh{A}}$, i.e.
\[
\mathbb{V}^{\Sh{O}_L}(\Sh{H}^{\sharp}_{\Sh{A}})(\gamma \colon \Spf{R'} \to \mathfrak{IG}_n) := \Hom_{\Sh{O}_L \otimes R'}(\gamma^*\Sh{H}^{\sharp}_{\Sh{A}}, \Sh{O}_L \otimes R') = \prod_{\sigma \in \Sigma_L} \Hom_{R'}(\gamma^* \Sh{H}^{\sharp}_{\Sh{A},\sigma}, R') = \prod_{\sigma \in \Sigma_L} \mathbb{V}(\Sh{H}^{\sharp}_{\Sh{A},\sigma}).
\]
Here $\mathbb{V}(\Sh{H}^{\sharp}_{\Sh{A},\sigma})$ is the usual geometric vector bundle associated with a locally free $\Sh{O}_{\mathfrak{IG}_n}$-module. Let $\V{\Hs{,\sigma}}{s_{\sigma}}{\Sh{Q}_{\sigma}}$ be the subfunctor of $\mathbb{V}(\Hs{,\sigma})$ defined by
\begin{align*}
\V{\Hs{,\sigma}}{s_{\sigma}}{\Sh{Q}_{\sigma}}(\gamma \colon \Spf{R'}\to \mathfrak{IG}_n) := \left\{ f \in \mathbb{V}(\Hs{,\sigma}) \, | \, (f \text{ mod } \beta_n)(\gamma^*s_{\sigma}) = 1, (f \text{ mod }\eta)(\gamma^*\Sh{Q}_{\sigma}) = 0 \right\}.
\end{align*}

\begin{lemma}\label{L2013}
    The functor $\V{\Hs{,\sigma}}{s_{\sigma}}{\Sh{Q}_{\sigma}}$ is representable.
    Let $\Spf{R}' \subset \mathfrak{IG}_n$ be a Zariski open where both $\Omega_{\Sh{A},\sigma}$ and $\Hs{,\sigma}$ are trivial, with a basis $X_{\sigma},Y_{\sigma}$ such that $X_{\sigma}$ is a lift of $s_{\sigma}$ and $Y_{\sigma}$ is a lift of a generator of $\Sh{Q}_{\sigma}$. Then we have a fibre diagram as follows.
\[\begin{tikzcd}[column sep=huge]
	{\mathfrak{IG}_n} & {\mathbb{V}(\Hs{,\sigma})} & {\V{\Hs{,\sigma}}{s_{\sigma}}{\Sh{Q}_{\sigma}}} \\
	{\Spf{R'}} & {\Spf{R'\langle X_{\sigma},Y_{\sigma}\rangle}} & {\Spf{R'}\langle Z_{\sigma}, W_{\sigma}\rangle}
	\arrow[from=1-2, to=1-1]
	\arrow[from=1-3, to=1-2]
	\arrow[from=2-1, to=1-1]
	\arrow[from=2-2, to=1-2]
	\arrow[from=2-2, to=2-1]
	\arrow[from=2-3, to=1-3]
	\arrow["{X_{\sigma}\mapsto 1+\beta_nZ_{\sigma}}", from=2-3, to=2-2]
    \arrow["{Y_{\sigma}\mapsto \eta W_{\sigma}}"', from=2-3, to=2-2]
\end{tikzcd}\]
\end{lemma}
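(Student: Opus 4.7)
The plan is to establish representability locally on $\mathfrak{IG}_n$ via the explicit fibre diagram in the statement, and then glue. Over a Zariski open $\Spf R' \subset \mathfrak{IG}_n$ where both $\Omega_{\Sh{A},\sigma}$ and $\Hs{,\sigma}$ are free, I can adapt the basis so that $X_\sigma \in \Omega_{\Sh{A},\sigma}$ lifts $s_\sigma$ (possible because $X_\sigma$ lives in the subbundle $\Omega_{\Sh{A},\sigma}\hookrightarrow \Hs{,\sigma}$) and $Y_\sigma$ lifts a generator of the rank-one submodule $\Sh{Q}_\sigma \subset \Hs{,\sigma}/\eta$; such an adapted basis exists after possibly shrinking $\Spf R'$.

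With this basis fixed, an $R''$-point of $\mathbb{V}(\Hs{,\sigma})|_{\Spf R'}$ is a functional $f$ determined by $a := f(X_\sigma)$ and $b := f(Y_\sigma)$, yielding the standard presentation $\mathbb{V}(\Hs{,\sigma})|_{\Spf R'} \simeq \Spf R'\langle X_\sigma, Y_\sigma\rangle$. The two defining conditions of $\V{\Hs{,\sigma}}{s_\sigma}{\Sh{Q}_\sigma}$ translate directly into congruences: $(f \bmod \beta_n)(s_\sigma) = 1$ becomes $a \equiv 1 \pmod{\beta_n}$, while $(f \bmod \eta)(\Sh{Q}_\sigma) = 0$ becomes $b \equiv 0 \pmod{\eta}$. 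Writing $a = 1 + \beta_n Z_\sigma$ and $b = \eta W_\sigma$ parametrises solutions freely by $(Z_\sigma, W_\sigma) \in R''^{\,2}$, identifying the subfunctor with $\Spf R'\langle Z_\sigma, W_\sigma\rangle$ and producing exactly the fibre diagram displayed in the statement.

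For global representability I would observe that the defining conditions refer only to the intrinsic data $s_\sigma$ and $\Sh{Q}_\sigma$, so $\V{\Hs{,\sigma}}{s_\sigma}{\Sh{Q}_\sigma}$ is a Zariski sheaf on $\mathfrak{IG}_n$ and the local representing formal schemes glue. Concretely, on an overlap of trivialising opens, a different adapted basis $(X_\sigma', Y_\sigma')$ differs from $(X_\sigma, Y_\sigma)$ by units of $R'$ together with elements in $\beta_n \Omega_{\Sh{A},\sigma}$ and $\Omega_{\Sh{A},\sigma} + \eta \Hs{,\sigma}$ respectively; the induced change of variables on $(Z_\sigma, W_\sigma)$ is then affine-linear with coefficients in $R'$, hence patches cleanly to a representing formal scheme over all of $\mathfrak{IG}_n$.

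I do not anticipate a genuine obstacle: the lemma is essentially a bookkeeping statement packaging the two congruence conditions into clean formal coordinates. The only point worth verifying with care is the existence of adapted bases, which follows from the splitting of $\psi$ modulo $\eta$ in the preceding proposition combined with the local freeness of $\Omega_{\Sh{A}}$ and $\Hs{}$ as $\Sh{O}_L \otimes \Sh{O}_{\mathfrak{IG}_n}$-modules of rank $1$ and $2$ respectively.
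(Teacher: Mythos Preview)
Your proposal is correct and is precisely the standard argument: trivialise locally, translate the two marking conditions into the congruences $f(X_\sigma)\equiv 1 \pmod{\beta_n}$ and $f(Y_\sigma)\equiv 0 \pmod{\eta}$, solve freely via $X_\sigma\mapsto 1+\beta_n Z_\sigma$, $Y_\sigma\mapsto \eta W_\sigma$, and glue. The paper itself gives no in-text argument here, only the citation \cite[\S4.2,4.3]{twisted_tripleL_finite_slope}; what you wrote is exactly the content one expects to find there, so your proof agrees with the paper's intended approach.
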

\begin{proof}
    \cite[\S4.2,4.3]{twisted_tripleL_finite_slope}.
\end{proof}
Let $\mathbb{V}^{\Sh{O}_L}_0(\Hs{}, s, \Sh{Q}) := \prod_{\sigma\in \Sigma_L} \V{\Hs{,\sigma}}{s_{\sigma}}{\Sh{Q}_{\sigma}}$. Let $\rho \colon \mathbb{V}^{\Sh{O}_L}_0({\Hs{}},s,\Sh{Q}) \to \bar{\mathfrak{X}}^{G_L}_{\nu}$. Let $k \colon (\Sh{O}_L \otimes \Z_p)^{\times} \to R^{\times}$ be an $m$-analytic character, i.e. $k = \chi\cdot k^0$ where $\chi = k_{|(\Sh{O}_L \otimes \Z_p)^{\times}_{\text{tor}}}$ is the finite part of $k$, and $k^0$ is analytic on $1+p^m\Res_{\Sh{O}_L/\Z}(\Z_p)$. By analyticity, $k^0$ extends to a character $k^0 \colon (\Sh{O}_L \otimes \Z_p)^{\times}\cdot (1 + p^m\Res_{\Sh{O}_L/\Z}\Ga) \to \Gm$ where $\Ga, \Gm$ are sheaves on the category of $p$-adically complete formal schemes over $R$. For this fixed $m$, choose $\nu$ and $n$ large enough such that $1+\beta_n\Res_{\Sh{O}_L/\Z}\Ga \subset 1 + p^m\Res_{\Sh{O}_L/\Z}\Ga$. 

The natural action of $(\Sh{O}_L \otimes \Z_p)^{\times}$ on the finite \'etale cover $\Sh{IG}_n \to \bar{\Sh{X}}^{G_L}_{\nu}$ extends by the universal property of relative normalisation to an action on the formal model $\mathfrak{IG}_n \to \bar{\mathfrak{X}}^{G_L}_{\nu}$. Moreover, there is an action of $\mathfrak{T}^{\mathrm{ext}} := (\Sh{O}_L \otimes \Z_p)^{\times}\cdot (1 + \beta_n\Res_{\Sh{O}_L/\Z}\Ga)$ on $\rho \colon \mathbb{V}^{\Sh{O}_L}_0(\Hs{},s,\Sh{Q}) \to \bar{\mathfrak{X}}^{G_L}_{\nu}$ compatible with the above action \cite[\S4.3.1]{twisted_tripleL_finite_slope}.

\begin{definition}
    With notation as above, we define $\mathbb{W}_{k^0} := \rho_* \Sh{O}_{\mathbb{V}^{\Sh{O}_L}_0(\Hs{},s,\Sh{Q})}[k^0]$ as the functions $f \in \rho_* \Sh{O}_{\mathbb{V}^{\Sh{O}_L}_0(\Hs{},s,\Sh{Q})}$ that transform as $\lambda \ast f = k^0(\lambda)f$ for the action of $\lambda \in \mathfrak{T}^{\mathrm{ext}}$.
\end{definition}

\begin{lemma}\label{L2015}
    For $\Spf{R'} \subset \mathfrak{IG}_n$ as in Lemma \ref{L2013}, 
    \[
    \mathbb{W}_{k^0}(\Spf{R}') = R'\langle \{V_{\sigma}\}_{\sigma \in \Sigma_L}\rangle \cdot (1+\beta_nZ)^{k^0}, \quad V_{\sigma} := \frac{W_{\sigma}}{1+\beta_nZ_{\sigma}};\  (1+\beta_nZ)^{k^0} := \prod_{\sigma \in \Sigma_L}(1+\beta_n Z_{\sigma})^{k^0_{\sigma}}.
    \]
\end{lemma}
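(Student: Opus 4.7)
The plan is to identify both sides explicitly via Lemma \ref{L2013}, then exhibit $(1+\beta_n Z)^{k^0}$ as the universal weight-$k^0$ eigenfunction and show that multiplication by it establishes an isomorphism $R'\langle V_\sigma\rangle \xrightarrow{\sim} \mathbb{W}_{k^0}(\Spf R')$.

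First, taking the product over $\sigma \in \Sigma_L$ in Lemma \ref{L2013} identifies $\mathbb{V}^{\Sh{O}_L}_0(\Hs{},s,\Sh{Q})|_{\Spf R'}$ with $\Spf R'\langle\{Z_\sigma, W_\sigma\}_{\sigma\in\Sigma_L}\rangle$, so that $\rho_*\Sh{O}(\Spf R') = R'\langle Z_\sigma, W_\sigma\rangle$. Next, I would unpack the $\mathfrak{T}^{\mathrm{ext}}$-action following the construction in \cite[\S4.3.1]{twisted_tripleL_finite_slope}: it is induced by rescaling the marked generator $s_\sigma$, and translates on coordinates to $1+\beta_n Z_\sigma \mapsto \sigma(\lambda)(1+\beta_n Z_\sigma)$ and $W_\sigma \mapsto \sigma(\lambda) W_\sigma$ for $\lambda \in \mathfrak{T}^{\mathrm{ext}}$. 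Two consequences follow: $V_\sigma := W_\sigma/(1+\beta_n Z_\sigma)$ is $\mathfrak{T}^{\mathrm{ext}}$-invariant, and the $m$-analyticity of $k^0$ together with the chosen inclusion $1+\beta_n\Res_{\Sh{O}_L/\Z}\Ga \subset 1+p^m\Res_{\Sh{O}_L/\Z}\Ga$ ensures that $(1+\beta_n Z)^{k^0} := \prod_\sigma (1+\beta_n Z_\sigma)^{k^0_\sigma}$ is a well-defined unit transforming as $\lambda\ast(1+\beta_n Z)^{k^0} = k^0(\lambda)(1+\beta_n Z)^{k^0}$. This yields an $R'$-linear map
$$\Phi \colon R'\langle V_\sigma\rangle \longrightarrow \mathbb{W}_{k^0}(\Spf R'), \qquad a(V) \longmapsto a(V)\cdot(1+\beta_n Z)^{k^0},$$
which is injective because $(1+\beta_n Z)^{k^0}$ is a unit.

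For surjectivity, the substitution $W_\sigma = V_\sigma(1+\beta_n Z_\sigma)$ gives a topological $R'$-algebra isomorphism $R'\langle Z_\sigma,W_\sigma\rangle \cong R'\langle Z_\sigma,V_\sigma\rangle$. Given any $f \in \mathbb{W}_{k^0}(\Spf R')$, the quotient $h := f/(1+\beta_n Z)^{k^0}$ is an invariant element of $R'\langle Z_\sigma,V_\sigma\rangle$, so it suffices to show that every such invariant lies in the subring $R'\langle V_\sigma\rangle$, i.e.\ is independent of the $Z_\sigma$. I would verify this by testing invariance against the tautological element of the subgroup $1+\beta_n\Res_{\Sh{O}_L/\Z}\Ga \subset \mathfrak{T}^{\mathrm{ext}}$, whose action multiplies $1+\beta_n Z_\sigma$ by $1+\beta_n T_\sigma$ for formal indeterminates $T_\sigma$. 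Expanding $h = \sum_{\mathbf{n}} h_{\mathbf{n}}(V) Z^{\mathbf{n}}$ as a convergent Tate series and comparing coefficients in $T_\sigma$ over $R'\langle V_\sigma\rangle$, one concludes $h_{\mathbf{n}} = 0$ for all $\mathbf{n} \neq 0$.

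The main obstacle is precisely this last step: rigorously extracting the vanishing of the nonzero-degree coefficients from the universal action of $1+\beta_n\Res_{\Sh{O}_L/\Z}\Ga$, since the comparison of coefficients has to be carried out in a completed Tate algebra over $R'\langle V_\sigma\rangle$ rather than in a polynomial ring. Granted this, $h$ is forced to lie in $R'\langle V_\sigma\rangle$, so $\Phi$ is surjective and the equality $\mathbb{W}_{k^0}(\Spf R') = R'\langle V_\sigma\rangle \cdot (1+\beta_n Z)^{k^0}$ follows.
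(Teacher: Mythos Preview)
The paper does not give a proof here but simply cites \cite[Corollary 4.37]{twisted_tripleL_finite_slope}; your argument is the standard one and is presumably what appears in that reference. The obstacle you flag is not serious: invariance of $h$ under the universal element of $1+\beta_n\Res_{\Sh{O}_L/\Z}\Ga$ yields, upon extracting the coefficient of $T_\sigma$, the identity $(1+\beta_n Z_\sigma)\,\partial_{Z_\sigma}h=0$ in $R'\langle Z,V\rangle$, and since $R'$ is $p$-torsion-free (it is flat over $\Sh{O}_K$, being an open in the normalization $\mathfrak{IG}_n$) this forces every coefficient $h_{\mathbf{n}}$ with $\mathbf{n}\neq 0$ to vanish.
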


\begin{proof}
    \cite[Corollary 4.37]{twisted_tripleL_finite_slope}.
\end{proof}

\begin{definition}
    For $q = 4$ if $p = 2$, and $q=p$ otherwise, let $f \colon \mathfrak{IG}_q \to \bar{\mathfrak{X}}^{G_L}_{\nu}$ be the projection. Define $\mathfrak{w}_{\chi} = f_*\Sh{O}_{\mathfrak{IG}_q}[\chi^{-1}]$. 
\end{definition}

\begin{definition}
    We define the sheaf $\mathbb{W}_k$ of nearly overconvergent (geometric) Hilbert modular forms of weight $k$ to be $\mathfrak{w}_{\chi} \hat{\otimes} \mathbb{W}_{k^0}$. Define the subsheaf of cuspidal classes as $\mathbb{W}_k(-D) := \mathbb{W}_k \hat{\otimes}\Sh{O}_{\bar{\mathfrak{X}}^{G_L}_{\nu}}(-D)$ for the boundary divisor $D$.
\end{definition}

\begin{lemma}
    The sheaf $\mathbb{W}_k$ is equipped with an increasing filtration $\{\Fil_i\}_{i\geq 0}$ by coherent sheaves which are locally free over the generic fibre, and $\mathbb{W}_k = (\varinjlim_i \Fil_i\mathbb{W}_k)^{\wedge{}}$, where $( \cdot )^{\wedge{}}$ denotes the $p$-adic completion.
\end{lemma}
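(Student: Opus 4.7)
The plan is to use the explicit local description from Lemma \ref{L2015} to define the filtration and then verify that it glues to a well-defined global filtration by coherent subsheaves. Over a trivialising Zariski open $\Spf R' \subset \mathfrak{IG}_n$ as in Lemma \ref{L2013}, Lemma \ref{L2015} gives
$$\mathbb{W}_{k^0}(\Spf R') = R'\langle V_\sigma : \sigma \in \Sigma_L\rangle \cdot (1+\beta_n Z)^{k^0}.$$
I would set $\Fil_i \mathbb{W}_{k^0}(\Spf R')$ to be the $R'$-submodule generated by the monomials $\prod_\sigma V_\sigma^{a_\sigma} \cdot (1 + \beta_n Z)^{k^0}$ of total degree $\sum_\sigma a_\sigma \leq i$. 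This is a finite free $R'$-module of finite rank (equal to the number of monomials in $|\Sigma_L|$ variables of total degree at most $i$), hence coherent, and on the generic fibre it is free with a monomial basis and in particular locally free.

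The key step is to show this filtration is intrinsic, i.e., independent of the choice of local lifts $X_\sigma, Y_\sigma$ of $s_\sigma$ and of a generator of $\Sh{Q}_\sigma$, and independent of the trivialisation. For a different lift $Y'_\sigma = Y_\sigma + a_\sigma X_\sigma$ with $a_\sigma \in R'$, the new torsor coordinate satisfies $\eta W'_\sigma = \eta W_\sigma + a_\sigma(1 + \beta_n Z_\sigma)$, so after dividing by $1+\beta_n Z_\sigma$ and restricting to the $k^0$-isotypic component for the $\mathfrak{T}^{\text{ext}}$-action, the change of variable $V_\sigma \mapsto V'_\sigma$ is affine linear in $V_\sigma$ and in particular preserves total degree. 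An analogous check is needed for a different lift $X'_\sigma$ of $s_\sigma$ and for the extended torus action, both of which again amount to transformations preserving the polynomial degree in the $V_\sigma$'s. Granted this, one obtains a global coherent filtration on $\mathbb{W}_{k^0}$, and sets
$$\Fil_i \mathbb{W}_k := \mathfrak{w}_\chi \hat\otimes \Fil_i \mathbb{W}_{k^0},$$
which preserves coherence and local freeness on the generic fibre because $\mathfrak{w}_\chi$ is an invertible sheaf.

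Finally, the identity $\mathbb{W}_k = (\varinjlim_i \Fil_i \mathbb{W}_k)^\wedge$ is immediate from the local description, since on $\Spf R'$ one has $\varinjlim_i \Fil_i \mathbb{W}_{k^0}(\Spf R') = R'[V_\sigma] \cdot (1+\beta_n Z)^{k^0}$, whose $p$-adic completion is the Tate algebra $R'\langle V_\sigma\rangle \cdot (1+\beta_n Z)^{k^0} = \mathbb{W}_{k^0}(\Spf R')$; tensoring with the line bundle $\mathfrak{w}_\chi$ and passing to the $p$-adic completion yields the statement for $\mathbb{W}_k$. I expect the main obstacle to be the intrinsic characterisation of the degree filtration under all the relevant changes of trivialisation, which requires a careful but essentially elementary bookkeeping in the coordinates $(Z_\sigma, W_\sigma)$ on the torsor $\mathbb{V}_0^{\Sh{O}_L}(\Hs{}, s, \Sh{Q})$; once this is settled, coherence, local freeness on the generic fibre, and the direct-limit description follow directly from the Tate-algebra presentation recalled in Lemma \ref{L2015}.
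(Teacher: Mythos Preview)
Your proposal is correct and matches the approach the paper has in mind: the paper's ``proof'' is simply a citation to \cite[Proposition 4.50]{twisted_tripleL_finite_slope}, and Remark~\ref{R2019} immediately following the lemma confirms that the filtration is exactly the one you describe, by total degree in the $V_\sigma$'s. The only point you leave slightly implicit is the descent from $\mathfrak{IG}_n$ down to $\bar{\mathfrak{X}}^{G_L}_\nu$ along the $(\mathcal{O}_L\otimes\Z_p)^\times$-action, but you gesture at this with ``the extended torus action,'' and indeed that action scales each $V_\sigma$ by a unit and so preserves the degree filtration.
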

\begin{proof}
    \cite[Proposition 4.50]{twisted_tripleL_finite_slope}.
\end{proof}

\begin{remark}\label{R2019}
    Over the rigid generic fibre, passing to the finite \'etale cover $\Sh{IG}_n$, Lemma \ref{L2015} makes it clear that $\mathbb{W}_k$ is a completed colimit of locally free filtered submodules, where in the notation of the lemma, $\Fil_i\mathbb{W}_k(\Sp R'[1/p]) = R'[1/p][\{V_{\sigma}\}_{\sigma \in \Sigma_L}]^{\leq i}\cdot (1+\beta_nZ)^{k^0}$ is the submodule generated by the monomials in the $V_{\sigma}$'s of total degree $\leq i$. We will use these local coordinates to describe the Gauss--Manin connection later.
\end{remark}

We now explain how to descend the sheaf $\mathbb{W}_k$ constructed above to a sheaf of arithmetic Hilbert modular forms over $\bar{\Sh{M}}^{G_L}_{\nu}$ as in Theorem \ref{L2011}. Let $(\vartheta, \mu) \in \Sh{W}^{G_L}(R[1/p])$ be a weight such that $k = 2\vartheta+\mu t_L$ is $m$-analytic as above. For $x \in F^{\times, +}_{\mathfrak{N}p}$, and $[x] \colon \bar{\mathfrak{X}}^{G_L}_{\nu} \to \bar{\mathfrak{X}}^{G_L}_{\nu}$ given by multiplying the polarisation data by $x$, the map \[[x]^*\mathbb{W}_k \xrightarrow{\mathrm{id}} \mathbb{W}_k \xrightarrow{\times x^{\vartheta}} \mathbb{W}_k\]
defines an action of $F^{\times,+}_{\mathfrak{N}p}$ on $\mathbb{W}_k$ which respects the filtration. Let $p \colon \bar{\mathfrak{X}}^{G_L}_{\nu} \to \bar{\mathfrak{M}}^{G_L}_{\nu}$ be the projection.
\begin{definition}
    We define the sheaf of nearly overconvergent arithmetic Hilbert modular forms to be $\mathbb{W}^{G_L}_k := (p_*\mathbb{W}_k)^{F^{\times, +}_{\mathfrak{N}p}}$, i.e. the invariant sections for the action of $F^{\times,+}_{\mathfrak{N}p}$. Define $\mathfrak{w}_k^{G_L} := \Fil_0\mathbb{W}^{G_L}_k$ to be the sheaf of overconvergent arithmetic Hilbert modular forms. Define analogously the cuspidal variants.
\end{definition}

\begin{remark}
    We note that the notation $\mathbb{W}_k^{G_L}$ suppresses the information of the weight $(\vartheta, \mu)$, and in particular the fact that the infinity type of the central character is $-\mu t_L$. We regret this choice of notation, however it was made to stay consistent with the notation of \cite{twisted_tripleL_finite_slope} so that readers referring to that work don't get confused.
\end{remark}

\begin{definition}\label{D2021}
    We call a weight $k \colon (\Sh{O}_L \otimes \Z_p)^{\times} \to R^{\times}$ \textbf{good} if $k = \chi_k\cdot k^0$ where $\chi_k = k_{|(\Sh{O}_L \otimes \Z_p)^{\times}_{\mathrm{tor}}}$ is the finite part of $k$, and $k^0$ factors as
    \[
    k^0 \colon (\Sh{O}_L \otimes \Z_p)^{\times} \to (\Sh{O}_L \otimes \Sh{O}_K)^{\times} \simeq \prod_{\sigma \in \Sigma_L} \Sh{O}_K^{\times} \xrightarrow{k^0_{\sigma}} R^{\times} ,
    \]
    with $k^0_{\sigma}(t) = \exp(u_{k,\sigma}\log{t})$ for some $u_{k,\sigma} \in R$ for all $\sigma$.
\end{definition}

We recall that the Gauss--Manin connection $\nabla \colon \Sh{H}_{\Sh{A}} \to \Sh{H}_{\Sh{A}} \otimes \Omega^1_{\bar{X}^{G_L}}(\log(\text{cusps}))$ induces the Kodaira--Spencer isomorphism $\Omega^1_{\bar{X}^{G_L}}(\log(\text{cusps})) \simeq \oplus_{\sigma \in \Sigma_L} (\omega_{\Sh{A},\sigma}^2 \otimes \wedge^2\Sh{H}_{\Sh{A},\sigma}^{-1})$ which is equivariant for the action of $F^{\times,+}_{\mathfrak{N}p}$ on the polarisation.

\begin{theorem}\label{T2022}
    Let $p \neq 2$ and $k,r \colon (\Sh{O}_L \otimes \Z_p)^{\times} \to R^{\times}$ be good weights. Let $\mathbf{g}^{[\Sh{P}]} \in \mathbb{W}^{G_L}_k(-D)$ be an arithmetic overconvergent cuspform of weight $(\vartheta, \mu)$ with $2\vartheta +\mu t_L = k$, that is depleted at all primes above $p$. Then $\nabla^r(\mathbf{g}^{[\Sh{P}]}) \in \mathbb{W}^{G_L}_{k+2r}(-D)$ is a cuspform of weight $(\vartheta+r, \mu)$ after possibly shrinking the radius of overconvergence, i.e. as a section of $\mathbb{W}^{G_L}_{k+2r}(-D)$ over $\bar{\Sh{M}}^{G_L}_{\nu}$ for $\nu \gg 0$.

    Moreover, the same is true for $p=2$ if writing $r = \chi_r\cdot r^0$ as in Definition \ref{D2021}, $\chi_r$ is trivial on the $2$-torsion, and $r^0 = \prod_{\sigma} r^0_{\sigma}$ with $r^0_{\sigma}(t) = \exp(u_{r,\sigma}\log{t})$ for $u_{r,\sigma} \in 4R$.
\end{theorem}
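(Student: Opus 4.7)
The plan is to reduce to a local computation on the Igusa tower using the coordinates of Lemma~\ref{L2013} and Lemma~\ref{L2015}, then exploit the $p$-depletion to interpolate the iterated action of $\nabla$ for a $p$-adically varying exponent. By the Kodaira--Spencer isomorphism, which is $F^{\times,+}_{\mathfrak{N}p}$-equivariant, the Gauss--Manin connection decomposes as $\nabla = \sum_{\sigma \in \Sigma_L}\nabla_\sigma$ where $\nabla_\sigma$ shifts the weight in the $\sigma$-direction by $2e_\sigma$. On a chart $\Spf R' \subset \mathfrak{IG}_n$ carrying coordinates $V_\sigma, Z_\sigma$ as in Lemma~\ref{L2013}, a section of $\mathbb{W}_k$ has the local form $f(V)\cdot (1+\beta_n Z)^{k^0}$, and one writes $\nabla_\sigma$ explicitly as a first-order differential operator in $V_\sigma$ that sends $\Fil_i$ into $\Fil_{i+1}$ and twists the $(1+\beta_n Z)$-factor by $2e_\sigma$. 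For classical $r \in \N[\Sigma_L]$ this already produces an operator $\nabla^r \colon \mathbb{W}_k(-D) \to \mathbb{W}_{k+2r}(-D)$; the real content of the theorem is to extend this to good weights $r$.

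To interpolate, pass to $q$-expansions at the Tate objects described in \cite[\S5.3]{twisted_tripleL_finite_slope}, where each $\nabla_\sigma$ is identified with the Serre-type derivation $q_\sigma\partial_{q_\sigma}$. Thus for classical $r$ one has $\nabla^r\bigl(\sum_\nu a_\nu q^\nu\bigr) = \sum_\nu \nu^r a_\nu q^\nu$. Because $\mathbf{g}^{[\Sh{P}]}$ is depleted at every prime above $p$, only $\nu \in \Sh{O}_L$ coprime to $p$ contribute, and such $\nu$ are units in $\Sh{O}_L \otimes \Z_p$; consequently
\[
\nu^r := \chi_r(\nu)\prod_{\sigma \in \Sigma_L} \exp\bigl(u_{r,\sigma}\log\nu_\sigma\bigr)
\]
is well defined for every good weight $r$. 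The good-weight hypothesis, together with $u_{r,\sigma} \in 4R$ when $p=2$, is precisely what guarantees that $\exp(u_{r,\sigma}\log t)$ converges on the neighbourhood $1+\beta_n\Res_{\Sh{O}_L/\Z}\Ga$ used to define $\mathbb{W}_{k+2r}$---possibly after shrinking the radius of overconvergence $\nu$---so that the candidate $\nabla^r \mathbf{g}^{[\Sh{P}]}$ makes sense as a $p$-adically convergent $q$-expansion.

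To globalise, apply the local formula from the first paragraph on each chart of $\mathfrak{IG}_n$: iterating $\nabla$ formally in the $V_\sigma$-coordinates produces a power series in the $V_\sigma$'s times $(1+\beta_n Z)^{k^0+2r}$, and the convergence just established at the level of $q$-expansions forces these local expressions to land in $(\varinjlim_i \Fil_i \mathbb{W}_{k+2r})^{\wedge}$ and to glue, via the $q$-expansion principle, to a section of $\mathbb{W}_{k+2r}(-D)$. The $F^{\times,+}_{\mathfrak{N}p}$-equivariance of Kodaira--Spencer forces $\nabla^r$ to intertwine the $x^{\vartheta}$-twist defining $\mathbb{W}^{G_L}_k$ with the $x^{\vartheta+r}$-twist defining $\mathbb{W}^{G_L}_{k+2r}$, so the result descends to $\bar{\Sh{M}}^{G_L}_\nu$ as a cuspform of arithmetic weight $(\vartheta+r,\mu)$. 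The main obstacle is precisely this convergence/interpolation step: without $p$-depletion the series $\sum \nu^r a_\nu q^\nu$ could not even be defined for non-classical $r$, and without the good-weight hypothesis one could not control the radius on which the interpolated $\nu^r$ extends---which is exactly what forces the unavoidable loss of overconvergence recorded in the statement.
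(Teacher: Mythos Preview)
Your outline captures the right ingredients---the Kodaira--Spencer decomposition $\nabla=\sum_\sigma\nabla_\sigma$, the local description in the $(V_\sigma,Z_\sigma)$-coordinates, the role of $p$-depletion in making $\nu^r$ well defined, and the $F^{\times,+}_{\mathfrak{N}p}$-equivariance needed for descent---and these are exactly the pieces the paper (via \cite[Proposition~5.18]{twisted_tripleL_finite_slope}) assembles. There is, however, a genuine gap in your globalisation step. You argue that convergence at $q$-expansions ``forces these local expressions to land in $(\varinjlim_i \Fil_i\mathbb{W}_{k+2r})^{\wedge}$ and to glue, via the $q$-expansion principle.'' But the $q$-expansion principle only gives \emph{injectivity} of restriction to the cusps; it does not tell you that a formal power series arising from your interpolation is the $q$-expansion of an actual section of $\mathbb{W}_{k+2r}(-D)$ over $\bar{\Sh{X}}^{G_L}_{\nu}$. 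On a general chart $\Spf R'\subset\mathfrak{IG}_n$ away from the boundary there is no analogue of $q_\sigma\partial_{q_\sigma}$, and in particular the symbol $d_\sigma^{r-j}$ for non-integral $r$ has no direct meaning there. The paper's route around this is different: one first proves the result for classical $r\in\N[\Sigma_L]$ (where $\nabla^r$ is a finite composition and everything is manifest on the sheaf), and then invokes \emph{density of classical integer weights} in the flat complete algebra $R$ to pass to general good $r$. This density argument is what replaces your appeal to the $q$-expansion principle.

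Two smaller points. First, the paper treats the finite part $\chi_r$ separately: one picks a classical weight $m\in\N[\Sigma_L]$ (with $m\in 4\N[\Sigma_L]$ if $p=2$) whose finite part equals $\chi_r$, and then \emph{defines} $\nabla^{\chi_r}:=\nabla^m\circ\nabla^{-m^0}$, reducing to the analytic case already handled. Your sketch folds $\chi_r$ into the $q$-expansion formula $\nu^r=\chi_r(\nu)\prod_\sigma\exp(u_{r,\sigma}\log\nu_\sigma)$, which again only makes sense at the cusps and does not by itself define an operator on the sheaf. Second, for the descent you assert that equivariance of Kodaira--Spencer ``forces $\nabla^r$ to intertwine the $x^{\vartheta}$-twist with the $x^{\vartheta+r}$-twist''; this is immediate for classical $r$ but for general $r$ again requires the density argument (one checks it for a dense set and passes to the limit), which you should make explicit.
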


\begin{proof}
    This is the content of \cite[Proposition 5.18]{twisted_tripleL_finite_slope} where the result is proved at the level of geometric Hilbert modular forms for the analytic part of the weight $r$. We briefly explain how to deduce the theorem from that proposition. 
    
    Firstly, the Gauss--Manin connection $\nabla \colon \mathbb{W}_k(-D) \to \mathbb{W}_k(-D) \hat{\otimes} \Omega^1_{\bar{\Sh{X}}^{G_L}_{\nu}}(\log(D))$ can be decomposed as a sum $\oplus_{\sigma \in \Sigma_L} \nabla_{\sigma}$ using the Kodaira--Spencer isomorphism, and the resulting map that we get taking the inclusion of $\omega_{\Sh{A},\sigma}^2 \subset \Sym^2\Sh{H}_{\Sh{A},\sigma}$ induces $\nabla_{\sigma} \colon \mathbb{W}_k(-D) \to \mathbb{W}_{k+2\sigma}(-D)$, which one proves can be iterated to \emph{good} weights. Furthermore, furnishing $\mathbb{W}_k$ (resp. $\mathbb{W}_{k+2\sigma}$) with the action of $F^{\times,+}_{\mathfrak{N}p}$ given by $x \ast f = x^{\vartheta}[x]^*(f)$ (resp. $x \ast f = x^{\vartheta+\sigma}[x]^*(f)$) makes $\nabla_{\sigma}$ equivariant for the action. This proves that $\nabla^r$ descends to arithmetic cuspforms for (the analytic part of) classical integer weights $r$, and the result follows for general weights by density of classical integer weights in the complete $\Sh{O}_K\llbracket (\Sh{O}_L \otimes \Z_p)^{\times}\rrbracket$ flat algebra $R$.

    Secondly, under the assumption on $r$, the finite part $\chi_r$ agrees with the finite part of some classical weight $m \in \N[\Sigma_L]$ for $p\neq 2$ (resp. $m \in 4\N[\Sigma_L]$ if $p=2$). But then writing $m = \chi_m \cdot m^0$ as a product of its finite part and analytic part, $\nabla^{\chi_m} = \nabla^{\chi_r}$ can be defined as $\nabla^m\circ \nabla^{-m^0}$ since $m^0$ now satisfies the conditions mentioned in the theorem.
\end{proof}

\begin{assumption} \label{assumption: weight r}
Let $g, f$ be as above with weights $(v,n)$ and $(w,m)$ respectively that are $F$-dominated. Let $K$ be a finite extension of $\Q_p$ containing all the Hecke eigenvalues of $g$ and $f$, and such that $L$ is split in $K$. Assume $g, f$ have finite slope $\leq a$ at a rational prime $p$ not dividing $N_gN_f$, and unramified in $L$. Let $\breve{g}, \breve{f}$ be as in Lemma \ref{L206}, which by \cite[Proposition 6]{miyake} are $K$-linear combinations of the oldforms coming from $g$ and $f$ respectively. Let $\omega_\mathbf{g}, \omega_\mathbf{f}$ be Coleman families of weight $(v_g, n_g)$ and $(w_f,2n_g)$ respectively, deforming (fixed) slope $\leq a$ $p$-stabilisations of $g$ and $f$. We suppose that the finite part of the central character of $\zeta^*\omega_\mathbf{g} \cdot \omega_\mathbf{f}$ is trivial. We note that diagonal restriction can be defined for (nearly) overconvergent families of modular forms using the formalism of vector bundles with marked sections; see \cite[Lemma 4.25, 7.7]{twisted_tripleL_finite_slope} for the relevant lemmas. Let $\breve{\omega}_\mathbf{g}, \breve{\omega}_\mathbf{f}$ be the overconvergent families of level $K_{11}(\mathfrak{A}\Sh{O}_L)$ and $K_{11}(\mathfrak{A})$ respectively, obtained from $\omega_\mathbf{g}$ and $\omega_\mathbf{f}$ using the same linear combinations of oldforms as with $\breve{g}$ and $\breve{f}$. Therefore $\breve{\omega}_\mathbf{g}$ and $\breve{\omega}_\mathbf{f}$ interpolate $p$-stabilisations of $\breve{g}$ and $\breve{f}$. Let $\breve{\omega}_\mathbf{f}^{\ast} = \breve{\omega}_\mathbf{f}|w_{\mathfrak{A}}$ where $w_{\mathfrak{A}}$ is the Atkin-Lehner involution defined in \cite[\S2]{twisted_tripleL_finite_slope} (see the discussion after Definition 2.9 in loc. cit.). Define similarly $\omega_\mathbf{f}^* = \omega_\mathbf{f} |w_{\mathfrak{A}}$.  Let $\Lambda_g$ and $\Lambda_f$ be the $p$-adically complete $K$-algebras with connected spectrum, flat over $\Sh{W}^{G_L}$ and $\Sh{W}^{G_F}$ respectively, such that $(v_g, n_g) \in \Sh{W}^{G_L}(\Lambda_g)$ and $(w_f, 2n_g) \in \Sh{W}^{G_F}(\Lambda_f)$. Let $k_g = 2v_g + n_gt_L$, and $k_f = 2w_f+2n_gt_F$. We assume that the continuous character $(w_f - {v_g}_{|F}) \colon (\Sh{O}_F \otimes \Z_p)^{\times} \to (\Lambda_g \hat{\otimes}_K \Lambda_f)^{\times}$  is \emph{good}, and satisfies the condition of Theorem \ref{T2022} for $p = 2$. We note that the locus of such weights is open and contains all classical weights (resp. all classical weights divisible by 4) for $p \neq 2$ (resp. for $p=2$). Choose any section $\Sigma_F \to \Sigma_L$ to the restriction map. Let $r = \chi_r\cdot r^0 \in (\Sh{O}_L \otimes \Z_p)^{\times} \to (\Lambda_g \hat{\otimes}_K \Lambda_f)^{\times}$ be a weight such that $\chi_r$ lifts the finite part of $(w_f - {v_g}_{|F})$, and $r^0$ lifts the torsion-free part along the chosen section. Hence the following diagram is commutative.
\[
\begin{tikzcd}
	{\Sp(\Lambda_g \hat{\otimes}_{K}\Lambda_f)} & {\Sh{W}^{G_L}} \\
	{\Sp(\Lambda_f)} & {\Sh{W}^{G_F}}
	\arrow["{(w_f, 2n_g)}"', from=2-1, to=2-2]
	\arrow["{(v_g+r, n_g)}", from=1-1, to=1-2]
	\arrow["{(v,n) \mapsto (v_{|F}, 2n)}", from=1-2, to=2-2]
    \arrow[from=1-1, to=2-1]
\end{tikzcd}
\]
We assume $k_g$ is \emph{good} too.

\end{assumption}

Let $\Lambda_{g,f} = \Lambda_g \hat{\otimes}_{K} \Lambda_f$. For sufficiently large $\ell$, the diagonal embedding of the Shimura variety for $G_F$ in the Shimura variety for $G_L$ restricts to a map of strict neighbourhoods of the ordinary loci as follows. 
\[
\begin{tikzcd}
	{\bar{\Sh{M}}^{G_F}_{\ell}} && {\bar{\Sh{M}}^{G_L}_{\ell'}} \\
	& {\Sp{\Lambda_{g,f}}}
	\arrow["\zeta", from=1-1, to=1-3]
	\arrow[from=1-1, to=2-2]
	\arrow[from=1-3, to=2-2]
\end{tikzcd}
\]
There is a Banach sheaf $\mathbb{W}^{G_F}_{k_f}(-D)$ over $\bar{\Sh{M}}^{G_F}_{\ell}$ of nearly overconvergent cuspforms for $G_F$, constructed similarly as $\mathbb{W}^{G_L}_{k_g+2r}$. Moreover, the trace map and the functoriality of vector bundles with marked sections induces a diagonal restriction map 
\[
\zeta^*\mathbb{W}^{G_L}_{k_g+2r} \xrightarrow{\zeta^*} \mathbb{W}^{G_F}_{k_f}.
\]
This allows $\zeta^*\nabla^r(\breve{\omega}^{[\Sh{P}]}_\mathbf{g})$ to be viewed as a class in $H^d_{\dR}(\bar{\Sh{M}}^{G_F}_{\ell}, \mathbb{W}^{G_F}_{k_f-2t_F}(-D)^{\bullet})$, where $\mathbb{W}^{G_F}_{k_f}(-D)^{\bullet}$ is the de Rham complex constructed using the Gauss--Manin connection \cite[\S6.4]{twisted_tripleL_finite_slope}. Using compactness of the $U$-operator, we can define a finite slope projector $e^{\leq a} \colon H^d_{\dR}(\bar{\Sh{M}}^{G_F}_{\ell}, \mathbb{W}^{G_F}_{k_f-2t_F}(-D)^{\bullet}) \to H^d_{\dR}(\bar{\Sh{M}}^{G_F}_{\ell}, \mathbb{W}^{G_F}_{k_f-2t_F}(-D)^{\bullet})^{\leq a}$ to the slope $\leq a$ subspace.

The Banach sheaf $\mathbb{W}^{G_F}_{k_f-2t_F}$ is a completed colimit of filtered subsheaves $\{\Fil_n\mathbb{W}^{G_F}_{k_f-2t_F}\}_{n\geq 0}$ which are locally free. The filtration is increasing and the Gauss--Manin connection satisfies Griffiths' transversality with respect to this filtration \cite[Corollary 5.6]{twisted_tripleL_finite_slope}. Hence we get another de Rham complex $\Fil_n^{\bullet}\mathbb{W}^{G_F}_{k_f-2t_F}(-D)$ as follows
\[
\Fil_n\mathbb{W}^{G_F}_{k_f-2t_F}(-D) \xrightarrow{\nabla} \Fil_{n+1}\mathbb{W}^{G_F}_{k_f-2t_F}(-D)\hat{\otimes}\Omega^1(\log(D))_{\bar{\Sh{M}}^{G_F}_{\ell}/\Lambda_{g,f}} \xrightarrow{\nabla} \dots \xrightarrow{\nabla} \Fil_{n+d}\mathbb{W}^{G_F}_{k_f-2t_F} \hat{\otimes} \Omega^d(\log(D))_{\bar{\Sh{M}}^{G_F}_{\ell}/\Lambda_{g,f}}.
\]

\begin{lemma}
    There exists $n \gg 0$ (depending on $a$) such that the map 
    \[
    H^d_{\dR}(\bar{\Sh{M}}^{G_F}_{\ell}, \Fil_n^{\bullet}\mathbb{W}^{G_F}_{k_f-2t_F}(-D))^{\leq a} \to H^d_{\dR}(\bar{\Sh{M}}^{G_F}_{\ell}, \mathbb{W}^{G_F}_{k_f-2t_F}(-D)^{\bullet})^{\leq a}
    \]
    is an isomorphism.
\end{lemma}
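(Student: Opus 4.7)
The plan is to pass through a short exact sequence of filtered complexes and then cut down by a slope bound on the associated graded. By Griffiths' transversality (the cited Corollary 5.6), the Gauss--Manin connection carries $\Fil_n \mathbb{W}^{G_F}_{k_f-2t_F}$ into $\Fil_{n+1}\mathbb{W}^{G_F}_{k_f-2t_F}\hat\otimes \Omega^1(\log D)$, so there is a short exact sequence of complexes
\[
0 \to \Fil_n^{\bullet}\mathbb{W}^{G_F}_{k_f-2t_F}(-D) \to \mathbb{W}^{G_F}_{k_f-2t_F}(-D)^{\bullet} \to Q_n^{\bullet} \to 0,
\]
where $Q_n^i := (\mathbb{W}^{G_F}_{k_f-2t_F}/\Fil_{n+i})(-D)\hat\otimes \Omega^i(\log D)$. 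The resulting long exact sequence in hypercohomology is $U$-equivariant, so it suffices to show that for $n\gg 0$ (depending on $a$) the slope $\leq a$ part of $H^*_{\dR}(\bar{\Sh{M}}^{G_F}_{\ell}, Q_n^{\bullet})$ vanishes; then the connecting maps become zero on slope $\leq a$ parts and the claimed isomorphism follows.

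The content is therefore a slope estimate for $U$ on the associated graded of the filtration. Working locally as in Lemma \ref{L2015} and Remark \ref{R2019}, on an affine of $\mathfrak{IG}_n$ the quotient $\Fil_n\mathbb{W}_k/\Fil_{n-1}\mathbb{W}_k$ is freely generated (over $\Fil_0\mathbb{W}_k = \mathfrak{w}_k$) by the degree $n$ monomials in the nearly overconvergent coordinates $V_{\sigma}$. Because $V_{\sigma}$ is a coordinate on the $\mathbb{V}_0$-torsor dual to a generator of $\Sh{H}^{\sharp}_{\Sh{A},\sigma}/\Omega_{\Sh{A},\sigma}$, the canonical-subgroup-isogeny defining $U$ scales each $V_{\sigma}$ by $p$ (up to units and up to factors of $\Hdg{}$); equivalently, under the identification of $\mathrm{gr}_n\mathbb{W}_k$ with $\mathfrak{w}_k$-valued polynomials of degree $n$, the action of $U$ factors as $p^n \cdot U^{\mathrm{oc}}$ for the usual overconvergent $U$-operator. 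Combined with the compactness of $U$ on $\mathfrak{w}$-cohomology (Lemma 6.2 of \cite{twisted_tripleL_finite_slope}), this yields that every $U$-eigenvalue contributing to $H^*_{\dR}(\bar{\Sh{M}}^{G_F}_{\ell}, Q_n^{\bullet})$ has $p$-adic valuation $> a$ once $n$ exceeds $a$ (up to a constant depending on $[F:\Q]$ and the normalisations), so the slope $\leq a$ projector annihilates it.

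The main obstacle is the explicit slope-shift statement for $U$ on each $\mathrm{gr}_n$. One has to check that the lift of the Frobenius/canonical-subgroup correspondence used to define $U$ indeed scales $V_{\sigma}$ by a factor of $p$ (modulo units), which amounts to computing how the splitting $\psi\colon \Sh{H}^{\sharp}_{\Sh{A}}/p\Hdg{-p^2}\to \Omega_{\Sh{A}}/p\Hdg{-p^2}$ behaves under the isogeny. Once this local computation is in place, the rest of the argument is a standard Coleman-style dévissage: write the hypercohomology of $Q_n^{\bullet}$ as an iterated extension of graded pieces via the spectral sequence of the filtration, apply the slope bound termwise, and conclude.
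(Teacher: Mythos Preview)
The paper gives no self-contained argument here; it simply cites \cite[Lemma~6.20]{twisted_tripleL_finite_slope}. Your outline---slope shift for $U$ on each graded piece $\mathrm{gr}_m\mathbb{W}$, followed by a Coleman-style d\'evissage through the long exact sequence---is the standard template for such results and matches the elliptic case in \cite{andreatta2021triple}, on which the cited reference is modelled. So in substance you are almost certainly reproducing what the cited lemma does.

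One point deserves more care than you give it. Arguing directly with the quotient complex $Q_n^{\bullet}$ is awkward: its terms are infinite-rank Banach sheaves, and the filtration by the images of $\Fil_m/\Fil_n$ is exhaustive only after $p$-adic completion (recall $\mathbb{W}_k = (\varinjlim_i \Fil_i)^{\wedge}$), so the associated spectral sequence does not converge to $H^*(Q_n^{\bullet})$ in any na\"{\i}ve sense. The cleaner route is to show inductively that $H^d(\Fil_m^{\bullet})^{\leq a} \to H^d(\Fil_{m+1}^{\bullet})^{\leq a}$ is an isomorphism for every $m \geq n$---here the cokernel complex is built from the \emph{coherent} sheaf $\mathrm{gr}_{m+1}$, on which your slope computation applies directly---and then separately identify $H^d(\mathbb{W}^{\bullet})^{\leq a}$ with the stable value, using compactness of $U$ on both sides (this is where \cite[Lemma~6.2]{twisted_tripleL_finite_slope} enters) together with an approximation argument. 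Your key local claim, that the canonical-subgroup isogeny scales $V_{\sigma}$ by a unit multiple of $p$, is correct up to powers of $\Hdg{}$, but the bookkeeping depends on the integral model $\Sh{H}^{\sharp}_{\Sh{A}}$ and on the normalisation of $U_0(\mathfrak{p})$; this computation is precisely the content of the cited lemma.
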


\begin{proof}
    \cite[Lemma 6.20]{twisted_tripleL_finite_slope}.
\end{proof}

Recall $k^0_{g,\sigma} = \exp(u_{k,\sigma}\log{(\cdot)})$ and $r^0_{\sigma} = \exp(u_{r,\sigma}\log(\cdot))$ for all $\sigma \in \Sigma_L$. Let $\rho_{\tau} = u_{k,\sigma} + u_{r,\sigma} + u_{k,\bar{\sigma}} + u_{r,\bar{\sigma}}$ for $\tau \in \Sigma_F$ and $\sigma, \bar{\sigma}$ being the two lifts of $\tau$. Note that one of $u_{r,\sigma}$ and $u_{r,\bar{\sigma}}$ is $0$ by our definition of $r^0$. Let $\lambda = \prod_{\tau \in \Sigma_F}\prod_{i=0}^n(\rho_{\tau} - i)$ for $n$ as in the above lemma. 

\begin{lemma}
    Let $\mathfrak{w}^{G_F}_{k_f} = \Fil_0\mathbb{W}^{G_F}_{k_f}$ be the sheaf of weight $k_f$ overconvergent modular forms. We have an isomorphism 
    \begin{align*}
    H^{\dagger} \colon H^d_{\dR}(\bar{\Sh{M}}^{G_F}_{\ell}, \mathbb{W}^{G_F}_{k_f-2t_F}(-D)^{\bullet})^{\leq a} \otimes \Lambda_{g,f}[\lambda^{-1}] &\xrightarrow{\sim} H^d_{\dR}(\bar{\Sh{M}}^{G_F}_{\ell}, \Fil_n^{\bullet}\mathbb{W}^{G_F}_{k_f-2t_F}(-D))^{\leq a} \otimes \Lambda_{g,f}[\lambda^{-1}] \\
    &\xrightarrow{\sim} H^0(\bar{\Sh{M}}^{G_F}_{\ell}, \mathfrak{w}^{G_F}_{k_f}(-D))^{\leq a} \otimes \Lambda_{g,f}[\lambda^{-1}].
    \end{align*}
\end{lemma}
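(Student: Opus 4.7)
The first isomorphism in $H^{\dagger}$ is essentially formal given the previous lemma: for $n$ chosen large enough in comparison with the slope $a$, that lemma produces a slope $\leq a$ isomorphism from the filtered to the unfiltered de Rham cohomology, and the first map in $H^{\dagger}$ is simply its inverse. This step does not require inverting $\lambda$, so tensoring with $\Lambda_{g,f}[\lambda^{-1}]$ only propagates the identification. The real content of the lemma is therefore the second isomorphism, which plays the role of an ``overconvergent projection'' from the top-degree de Rham cohomology of the filtered nearly overconvergent complex to global overconvergent arithmetic cuspforms of weight $k_f$.

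To produce this projection, my plan is to analyse $\Fil_n^{\bullet}\mathbb{W}^{G_F}_{k_f-2t_F}(-D)$ through its associated graded. By Lemma~\ref{L2015} and Remark~\ref{R2019}, over the formal Igusa cover each $\Fil_i/\Fil_{i-1}$ is locally freely generated by the monomials of total degree $i$ in the coordinates $V_{\sigma}$, twisted by the universal character. Under the Kodaira--Spencer isomorphism $\Omega^1(\log D) \simeq \bigoplus_{\tau\in\Sigma_F}\omega^2_{\Sh{A},\tau}\otimes(\wedge^2\Sh{H}_{\Sh{A},\tau})^{-1}$, the Gauss--Manin connection decomposes as $\nabla=\sum_{\tau}\nabla_{\tau}$, and each $\nabla_{\tau}$ raises the filtration degree by exactly one (Griffiths transversality). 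Writing $\nabla_{\tau}$ in the $V_{\sigma}$ coordinates yields a first-order differential operator that mixes multiplication by $V_{\sigma}$ with $\partial/\partial V_{\sigma}$ and with scalars involving the universal character, so the associated graded of the filtered de Rham complex acquires an explicit Koszul-type description whose top term, after absorbing the $\Omega^d$-twist, is precisely $\mathfrak{w}^{G_F}_{k_f}(-D)$.

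The appearance of $\lambda = \prod_{\tau\in\Sigma_F}\prod_{i=0}^{n}(\rho_{\tau}-i)$ is then accounted for as follows: on the graded piece of bi-filtration-degree $i$ in the $\tau$-direction, the natural endomorphism governing the Koszul differential has eigenvalue $\rho_{\tau}-i$, because $\rho_{\tau}$ is the contribution of the analytic weights $u_{k,\sigma}+u_{r,\sigma}+u_{k,\bar\sigma}+u_{r,\bar\sigma}$ to the twist $(1+\beta_nZ)^{k_g^0+2r^0}$, and differentiation in $V_{\sigma}$ produces this scalar minus the current polynomial degree. Once $\lambda$ is inverted in $\Lambda_{g,f}$ one can invert these scalars to build, piece by piece in the filtration, a canonical retraction from $\Fil_n^{\bullet}\mathbb{W}^{G_F}_{k_f-2t_F}(-D)$ onto $\mathfrak{w}^{G_F}_{k_f}(-D)[-d]$. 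The $F^{\times,+}_{\mathfrak{N}p}$-equivariance of the whole construction---the eigenvalues $\rho_{\tau}-i$ being global scalars in $\Lambda_{g,f}$---ensures that it descends from $\bar{\Sh{X}}^{G_F}_{\ell}$ to $\bar{\Sh{M}}^{G_F}_{\ell}$, and compactness of $U$ combined with the fact that $U$ preserves each $\Fil_i$ shows that the retraction is compatible with the slope $\leq a$ projector, so we obtain a slope $\leq a$ isomorphism after inverting $\lambda$.

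The main obstacle, I expect, is pinning down the precise local form of $\nabla_{\tau}$ on the graded pieces so that the eigenvalues are correctly identified with $\rho_{\tau}-i$ and not some shift thereof; this is a bookkeeping task involving the interaction between the logarithmic derivative of $(1+\beta_nZ)^{k_g^0+2r^0}$, the operator $\partial/\partial V_{\sigma}$, and the Atkin--Lehner normalisation built into the sheaf $\mathbb{W}^{G_F}_{k_f}$. A secondary issue is showing that the spectral sequence of the filtration on $\Fil_n^{\bullet}$ actually degenerates on the slope $\leq a$ part after inverting $\lambda$; this should follow from the fact that inverting $\lambda$ makes the Koszul differentials on the graded pieces invertible in all degrees strictly below the top, killing those contributions and leaving only $H^d(\mathrm{gr}_n^{\bullet}) \otimes \Lambda_{g,f}[\lambda^{-1}] \cong H^0(\mathfrak{w}^{G_F}_{k_f}(-D))\otimes \Lambda_{g,f}[\lambda^{-1}]$.
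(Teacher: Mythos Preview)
The paper does not supply an argument for this lemma; its proof consists solely of the citation ``\cite[Lemma 6.19]{twisted_tripleL_finite_slope}''. So there is no in-paper proof to compare your proposal against, only the expectation that the cited reference carries out an overconvergent projection in the Hilbert setting.

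Your sketch is in the right spirit and follows the standard template for such results (as in \cite[Proposition~3.37]{andreatta2021triple} in the elliptic case, which the present paper recalls in formula~(\ref{equation: overconvergent projection})): analyse the filtered de Rham complex through its associated graded, identify the induced differentials as scalar multiplication by weight-dependent constants, and invert those constants to split the filtration and retract onto $\Fil_0$. The role you assign to $\lambda$ is exactly the expected one.

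One point deserves care. You identify the eigenvalue on the degree-$i$ graded piece in the $\tau$-direction with $\rho_{\tau}-i$, motivated by $\rho_{\tau}$ being the $\tau$-contribution of $(1+\beta_nZ)^{k_g^0+2r^0}$. But the sheaf in question is $\mathbb{W}^{G_F}_{k_f-2t_F}$, so the relevant analytic weight is that of $k_f-2t_F$ at $\tau$, not literally $k_g^0+2r^0$. These agree because $(k_g+2r)_{|F}=k_f$ by construction of $r$ (the commutative square preceding the lemma), but you should make that identification explicit rather than leave it implicit; otherwise the factor of $2$ in front of $r$ and the $-2t_F$ shift look unaccounted for. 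You already flag the eigenvalue bookkeeping as the main obstacle, which is appropriate; just be sure the computation is done for the sheaf $\mathbb{W}^{G_F}_{k_f-2t_F}$ directly, with the Kodaira--Spencer twist absorbed, rather than imported from the $G_L$-side.
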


\begin{proof}
    \cite[Lemma 6.19]{twisted_tripleL_finite_slope}.
\end{proof}

\begin{cor}
    Denoting by $H^{\dagger,\leq a}$ the composition $H^{\dagger} \circ e^{\leq a}$, \[
    H^{\dagger,\leq a}(\zeta^*\nabla^r\breve{\omega}_\mathbf{g}^{[\Sh{P}]}) \in H^0(\bar{\Sh{M}}^{G_F}_{\ell}, \mathfrak{w}^{G_F}_{k_f}(-D))^{\leq a} \otimes \Lambda_{g,f}[\lambda^{-1}].
    \]
\end{cor}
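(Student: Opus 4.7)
The plan is to chain the preceding lemmas together; the corollary asserts that the input $\zeta^*\nabla^r\breve{\omega}_\mathbf{g}^{[\Sh{P}]}$ genuinely lies in the domain of $H^{\dagger}$ and that the output therefore lives where the lemma produces it. The first step is to apply Theorem~\ref{T2022}: by Assumption~\ref{assumption: weight r}, $k_g$ is good and $r$ satisfies the analyticity hypothesis (including the $p=2$ condition), so $\nabla^r(\breve{\omega}_\mathbf{g}^{[\Sh{P}]})$ defines a section of $\mathbb{W}^{G_L}_{k_g+2r}(-D)$ over $\bar{\Sh{M}}^{G_L}_{\nu}$ for some sufficiently large $\nu$; depletion at primes $\mathfrak{P} \mid p$ is precisely what allows the Gauss--Manin iteration to extend to the non-classical weight $r$.

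Next I would feed this section into the diagonal restriction $\zeta^* \colon \zeta^*\mathbb{W}^{G_L}_{k_g+2r} \to \mathbb{W}^{G_F}_{k_f}$ (well-defined in the overconvergent setting via the vector-bundles-with-marked-sections formalism). The compatibility of weights is built into Assumption~\ref{assumption: weight r} through the restriction $(v_g+r, n_g)_{|F} = (w_f, 2n_g)$, so the output is a global section of $\mathbb{W}^{G_F}_{k_f}(-D)$ over $\bar{\Sh{M}}^{G_F}_{\ell}$ for some $\ell$ (after possibly further shrinking the radius of overconvergence). Using the Kodaira--Spencer identification of top differentials with the weight-$2t_F$ sheaf, this section lifts tautologically to a class in the top term of the de Rham complex, and hence to a class in $H^d_{\dR}(\bar{\Sh{M}}^{G_F}_{\ell}, \mathbb{W}^{G_F}_{k_f-2t_F}(-D)^{\bullet})$, exactly as recalled in the paragraph preceding the corollary.

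With the input sitting in the common source of $e^{\leq a}$ and $H^{\dagger}$, the remaining steps are formal: apply the finite-slope projector $e^{\leq a}$, whose existence relies on the compactness of the $U$-operator on the Banach sheaf, and then apply the isomorphism $H^{\dagger}$ from the preceding lemma to land in $H^0(\bar{\Sh{M}}^{G_F}_{\ell}, \mathfrak{w}^{G_F}_{k_f}(-D))^{\leq a} \otimes \Lambda_{g,f}[\lambda^{-1}]$. The only mildly nontrivial issue is the bookkeeping of radii of overconvergence: $\nu$ and $\ell$ must be chosen (and shrunk) compatibly so that $\nabla^r$, $\zeta^*$, and $H^{\dagger}$ are all defined on the same underlying overconvergent neighbourhood. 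This follows from the flexibility of these parameters already built into Theorem~\ref{T2022} and the construction of $H^{\dagger}$, so no genuine obstruction arises beyond keeping track of the various openness statements.
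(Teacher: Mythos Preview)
Your proposal is correct and follows exactly the line of reasoning the paper intends: the corollary is stated in the paper without proof, as an immediate consequence of the preceding discussion (Theorem~\ref{T2022} and Assumption~\ref{assumption: weight r} give $\nabla^r\breve{\omega}_\mathbf{g}^{[\Sh{P}]}$, the paragraph before the lemmas explains how $\zeta^*$ lands it in $H^d_{\dR}$, and the two lemmas supply $e^{\leq a}$ and $H^{\dagger}$). Your unpacking of these steps, including the bookkeeping of overconvergence radii, is precisely what the paper leaves implicit.
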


As a result using the recipe of \cite[Lemma 2.19]{DR} to define a $p$-adic pairing $\langle \phantom{e}, \phantom{e} \rangle$ we can define the $p$-adic twisted triple product $L$-function as follows.

\begin{definition} \label{definition: twisted triple product}
    The $p$-adic twisted triple product $L$-function associated with the Coleman families $\breve{\omega}_\mathbf{g}, \breve{\omega}_\mathbf{f}$ is defined to be
    \[
    \mathscr{L}_p(\breve{\omega}_\mathbf{g},\breve{\omega}_\mathbf{f}) = \frac{\langle H^{\dagger,\leq a}(\zeta^*\nabla^r\breve{\omega}_\mathbf{g}^{[\Sh{P}]}), \breve{\omega}_\mathbf{f}^*\rangle}{\langle \omega_\mathbf{f}^*, \omega_\mathbf{f}^*\rangle}.
    \]
\end{definition}

\subsection{Interpolation formula}
Let now $(x,c) \in \N[\Sigma_L]\times \N$ and $(y,2c) \in \N[\Sigma_F] \times \N$ be two classical weights given by a closed point of $\Sp{\Lambda_{g,f}[\lambda^{-1}]}$. Thus, $r$ specialises to a weight $t \in \N[\Sigma_L]$ such that $(x+t)_{|F} = y$. Assume further that the weights are large enough compared to the slope bound $a$ (for example, as in \cite[Theorem 1]{TianXiao}) such that the specialisations of $\breve{\omega}_\mathbf{g}$ and $\breve{\omega}_\mathbf{f}$ respectively at $(x,c)$ and $(y,2c)$ are slope $\leq a$ $p$-stabilisations of classical eigenforms $\breve{g}_x \in S^{G_L}(K_{11}(\mathfrak{A}\Sh{O}_L),(x,c), K)$ and $\breve{f}_y \in S^{G_F}(K_{11}(\mathfrak{A}), (y,2c), K)$ whose Hecke polynomial for $T_0({\mathfrak{p}})$  for any prime $\mathfrak{p}$ above $p$ in $L$ (resp. $F$) has roots with distinct slopes.

\begin{remark}
    For this article, Hecke operator $T_0({\mathfrak{p}})$ for any prime $\mathfrak{p}|p$ means the ``optimally integral" Hecke operator which on  automorphic forms of weight $(x,c) \in \N[\Sigma_L] \times \N$ and level $K$ act as \[ T_0({\mathfrak{p}}) = \mathfrak{p}^{x-t_L}\left[K\left(\begin{smallmatrix}
        \varpi_{\mathfrak{p}} & 0 \\ 0 & 1
    \end{smallmatrix}\right)K\right].\]
    Here $\mathfrak{p}^{x-t_L} = p^{\sum_{\sigma|\mathfrak{p}}(x_{\sigma}-\sigma)}$.
\end{remark}

Let $\alpha_{g, \ell}, \beta_{g, \ell}$ be the $T_0({\ell})$-eigenvalues of $\breve{g}_x$ for any prime $\ell \nmid \mathfrak{A}\Sh{O}_L$ in $\Sh{O}_L$. Let $\alpha^*_{f,\ell}, \beta^*_{f,\ell}$ be $T_0({\ell})$-eigenvalues of $\breve{f}^*_y$ for any prime $\ell \nmid \mathfrak{A}$ in $\Sh{O}_F$. We also consider the Hecke operator $T_0(p) = \prod_{\mathfrak{p}|p} T_0({\mathfrak{p}})$ acting on $\breve{g}_x$ (resp. on $\breve{f}^*_y)$ where the product ranges over all the primes above $p$ in $L$ (resp. in $F$). Let $\alpha_{g,p}, \beta_{g,p}$ (resp. $\alpha^*_{f,p}, \beta^*_{f,p}$) be the $T_0(p)$-eigenvalues of $\breve{g}_x$ (resp. $\breve{f}^*_y$). Without loss of generality, we assume that both $\alpha_{g, p}$ and $\alpha^*_{f,p}$ have $p$-adic valuation $\leq a$.

We define appropriate Euler factors as follows:
\begin{itemize}
    \item $\mathscr{E}(\breve{f}^*_y) = (1 - \beta^*_{f,p}{\alpha^{*,-1}_{f,p}})$.
    \item For any prime $\mathfrak{p}|p$ in $\Sh{O}_F$ that stays inert in $L$, let $\mathscr{E}_{\mathfrak{p}}(\breve{g}_x, \breve{f}^*_y) = (1 - \mathfrak{p}^{t_{|F}}\alpha_{g,\mathfrak{p}}{\alpha^{*,-1}_{f,\mathfrak{p}}})(1 - \mathfrak{p}^{t_{|F}}\beta_{g,\mathfrak{p}}{\alpha^{*,-1}_{f,\mathfrak{p}}})$. Here writing $t_{|F} = \sum_{\sigma \in \Sigma_F} t_{\sigma}\sigma$, we define $\mathfrak{p}^{t_{|F}} = p^{\sum_{\sigma|\mathfrak{p}}t_{\sigma}}$.
    \item For any prime $\mathfrak{p}|p$ in $\Sh{O}_F$ that splits as $\mathfrak{p} = \mathfrak{p}_1\mathfrak{p}_2$ in $\Sh{O}_L$, let $\alpha_i = \alpha_{g,\mathfrak{p}_i}$, and $\beta_i = \beta_{g, \mathfrak{p}_i}$ for $i \in \{1,2\}$. Let 
    \[
    \mathscr{E}_{\mathfrak{p}}(\breve{g}_x, \breve{f}^*_y) = \prod_{\bullet, \diamond \in \{\alpha,\beta\}} (1 - \mathfrak{p}^{t_{|F}}\bullet_1\diamond_2\alpha^{*,-1}_{f,\mathfrak{p}}); \qquad \mathscr{E}_{0,\mathfrak{p}}(\breve{g}_x,\breve{f}^*_y) = (1 - \mathfrak{p}^{2t_{|F}}\alpha_1\alpha_2\beta_1\beta_2\alpha^{*,-2}_{f,\mathfrak{p}}).
    \]
\end{itemize}

\begin{theorem} \label{Theorem: interpolation formula}
    With the assumptions above, the evaluation of $\mathscr{L}_p(\breve{\omega}_\mathbf{g}, \breve{\omega}_\mathbf{f})$ at the $F$-dominated pair of weights $\left((x,c), (y,2c)\right)$ is given by
    \[
    \mathscr{L}_p(\breve{\omega}_\mathbf{g}, \breve{\omega}_\mathbf{f})\left((x,c), (y,2c)\right) = \frac{1}{\mathscr{E}(\breve{f}^*_y)}\left(\prod_{\mathfrak{p} \emph{\text{ inert}}} \mathscr{E}_{\mathfrak{p}}(\breve{g}_x, \breve{f}^*_y) \prod_{\mathfrak{p} \emph{\text{ split}}} \frac{\mathscr{E}_{\mathfrak{p}}(\breve{g}_x, \breve{f}^*_y)}{\mathscr{E}_{0,\mathfrak{p}}(\breve{g}_x,\breve{f}^*_y)} \right) \times \frac{\langle\zeta^*(\delta^t \breve{g}_x), \breve{f}^*_y\rangle}{\langle \breve{f}^*_y,\breve{f}^*_y\rangle}.
    \]
\end{theorem}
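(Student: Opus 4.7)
The strategy is to specialize each ingredient in Definition \ref{definition: twisted triple product} at the classical weight $((x,c),(y,2c))$ and unwind them step by step until we arrive at the classical expression on the right-hand side. First, under the slope hypothesis, the Coleman families $\breve{\omega}_\mathbf{g}$ and $\breve{\omega}_\mathbf{f}$ specialize to small slope $p$-stabilisations of the classical eigenforms $\breve{g}_x$ and $\breve{f}_y$, and the universal weight $r$ specializes to the classical weight $t \in \N[\Sigma_L]$ with $(x+t)_{|F}=y$. Similarly, $\omega_\mathbf{f}^*$ specializes to $f_y^*$, the $p$-stabilisation of $\breve{f}_y^*$ obtained by applying $w_{\mathfrak{A}}$. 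By Theorem \ref{T2022}, the iterated $p$-adic Gauss--Manin connection $\nabla^r$ is well-defined on the nearly overconvergent family, and via the standard $p$-adic/complex comparison between nearly overconvergent and nearly holomorphic Hilbert modular forms its classical specialization coincides with the Maass--Shimura iterate: $\nabla^t \breve{g}_x^{[\Sh{P}]} = \delta^t \breve{g}_x^{[\Sh{P}]}$.

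Next, I would remove the finite slope projector $H^{\dagger,\leq a}$ from the Petersson-like pairing. Because the compact operator $U$ is self-adjoint (up to the usual normalization) for the Serre duality pairing underlying $\langle \phantom{e}, \phantom{e}\rangle$, and $\breve{f}_y^*$ is a slope $\leq a$ eigenform, the pairing factors through $e^{\leq a}$. Combined with the comparison isomorphism between nearly overconvergent de Rham cohomology and overconvergent $H^0$ on the slope $\leq a$ part, this yields
\[
\langle H^{\dagger,\leq a}(\zeta^* \nabla^t \breve{g}_x^{[\Sh{P}]}), \breve{f}_y^* \rangle = \langle \zeta^*(\delta^t \breve{g}_x^{[\Sh{P}]}), \breve{f}_y^* \rangle,
\]
where the right-hand pairing is now the classical Petersson product for $G_F$.

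It remains to extract the Euler factors. Writing the depletion as $(\cdot)^{[\Sh{P}]} = \prod_{\mathfrak{P}|p}(1 - V_\mathfrak{P} U_\mathfrak{P})$ and expanding, one uses that $\delta^t$ commutes with each $V_\mathfrak{P}$ up to an explicit power of $\mathfrak{p}^{t_{|F}}$. Using the adjointness of $V$ and $U$ for the Petersson pairing, the $V$-shifts can be transferred onto $\breve{f}_y^*$, which is a $U_\mathfrak{p}$-eigenvector with eigenvalue $\alpha_{f,\mathfrak{p}}^*$. Careful bookkeeping, distinguishing whether a prime $\mathfrak{p}|p$ of $F$ is inert or split in $L$, produces exactly the displayed products of $\mathscr{E}_\mathfrak{p}$ and $\mathscr{E}_{0,\mathfrak{p}}$ factors in front of the undepleted pairing $\langle \zeta^*(\delta^t \breve{g}_x), \breve{f}_y^* \rangle$. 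Finally, the discrepancy between $\langle \omega_\mathbf{f}^*, \omega_\mathbf{f}^* \rangle$ (whose specialization is the Petersson norm of $f_y^*$) and the target normalization $\langle \breve{f}_y^*, \breve{f}_y^* \rangle$ is a standard oldform calculation: expressing $\breve{f}_y^*$ as the fixed $K$-linear combination of oldforms from $f_y^*$ (as in Lemma \ref{L206}) and applying the $p$-stabilisation relations produces exactly the factor $\mathscr{E}(\breve{f}_y^*) = (1 - \beta_{f,p}^*\alpha_{f,p}^{*,-1})$.

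The conceptual skeleton is clear; the main technical obstacle lies in the penultimate step. For Hilbert modular forms the depletion must be done prime by prime in $\Sh{O}_L$ above $p$, and for a prime $\mathfrak{p}$ of $F$ that splits as $\mathfrak{p}_1\mathfrak{p}_2$ in $L$ the two depletions interact with the diagonal restriction $\zeta^*$, producing the ``cross-term'' correction $\mathscr{E}_{0,\mathfrak{p}}$ in the denominator. Organising this interaction between the Hecke operators at the two primes above $\mathfrak{p}$, the restriction $\zeta^*$, and the twist by $\mathfrak{p}^{t_{|F}}$ coming from $\delta^t$, is where the bulk of the detailed computation lies, and it is ultimately what distinguishes the split and inert cases in Theorems \ref{theorem: main theorem split case} and \ref{theorem: main theorm inert case}.
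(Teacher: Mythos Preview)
Your proposal is correct and follows essentially the same approach as the paper, which simply refers to the computations in \cite[\S3.4]{blanco-chacón_fornea_2020}: specialize the families, identify $\nabla^t$ with $\delta^t$ on depleted classical forms, pass the projector across the pairing, and unwind the depletion prime by prime to extract the Euler factors. One small imprecision: the factor $\mathscr{E}(\breve{f}^*_y)$ arises purely from comparing the Petersson norm of the $p$-stabilisation (the specialization of $\omega_\mathbf{f}^*$) with that of the unstabilised form, not from the test-vector relation of Lemma~\ref{L206}, which only concerns levels away from $p$.
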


\begin{proof}
    This follows from computations similar to \cite[\S3.4]{blanco-chacón_fornea_2020}. We remark that in loc. cit. the Euler factors look different because the authors use Hecke eigenvalues for the non-normalised Hecke operators $\mathfrak{p}^{t_L-x}T_0({\mathfrak{p}})$ for any prime $\mathfrak{p}|p$ in $\Sh{O}_L$ (and similarly for $\Sh{O}_F$).
\end{proof}

\begin{remark}
    In \cite[Theorem 7.27]{twisted_tripleL_finite_slope} we proved a similar formula under the assumption that $\omega_{\mathbf{g}}, \omega_{\mathbf{f}}$ are general overconvergent families whose specialisations at the weights $(x,c), (y,2c)$ as above were assumed to be classical $T_0(p)$-eigenforms, instead of $U$-eigenforms. Hence we get slightly different Euler factors in that setting accounting for the two $p$-stabilisations of the $T_0(p)$-eigenform $f^*_y$.
\end{remark}
\newcommand{\calO}{\mathcal{O}}
\section{The generalized Hirzebruch--Zagier cycles and the Abel--Jacobi maps}
\label{section: GZ-formula}

From now on, we will mainly restrict ourselves to the case where $F = \mathbb{Q}$ and $L/ \mathbb{Q}$ is a real quadratic extension with two real embeddings denoted by $\sigma_1, \sigma_2$.

\subsection{Several formulae on \texorpdfstring{$q$}{}-expansions}

We write $\nabla_i = \nabla(\sigma_i)$ and let $d_i$ be the derivation corresponding to $\sigma_i$.
Namely, around a Tate object as defined in \cite[\S5.3]{twisted_tripleL_finite_slope}, $d_i$ acts on the $q$-expansions via
\[ d_i (\sum_\beta a_\beta q^\beta ) = \sum \sigma_i(\beta) a_\beta q^\beta ,\]
where $\sum_\beta a_\beta q^\beta$ belongs to the ring of $q$-expansions $R:= \Lambda^0_I  ( \!( \mathfrak{a} \mathfrak{b}, S)\!)$ corresponding to the Tate object.
Then, the Gauss--Manin connection can be described by the following formula:
\begin{align}
\begin{split}
    \nabla_{i} \left ( a \cdot \prod_\sigma V_\sigma^{h_\sigma} (1+ \beta_n Z)^k \right ) &= d_{i} (a)  \prod_\sigma V_\sigma^{h_\sigma} \cdot (1+ \beta_n Z)^{k+2 \sigma_i} \\
    &+ p (u_{k_{\sigma_i}} - h_{\sigma_i}) V_{\sigma_i} \prod_\sigma V_\sigma^{h_\sigma} \cdot (1+ \beta_n Z)^{k+2 \sigma_i}.
\end{split}
\end{align}

For a $\Sh{P}$-depleted form $\mathbf{g}^{[\Sh{P}]}$ with the $q$-expansion 
$\mathbf{g}^{[\Sh{P}]} (q) \cdot (1 +\beta_n Z)^{k}$ 
and a weight $r$ as in Assumption \ref{assumption: weight r}, the $q$-expansion of $\nabla^r g^{[\Sh{P}]}$ is then given by
\begin{equation} \label{equation: iteration}
    \sum_{j=0}^{\infty} p^j \binom{u_{r}}{j} \prod_{i =0}^{j-1} (u_{k_{\sigma_1}} +u_{r} -1-i) d_{1}^{{r}-j} \mathbf{g}^{[\mathcal{P}]}(q) \cdot V^j_{\sigma_1} (1+\beta_n Z)^{k +2 r }.
\end{equation}
Here we recall that the weight $r$ was defined as $\chi_r\cdot r^0$, where $r^0$ is the analytic part of the character and contributes to one embedding, which we can and will assume without loss of generality to be $\sigma_1$. 
The weight $\chi_r$ lifts a character of $(\Z_p^{\times})_{\mathrm{tor}}$ to a character of $(\calO_F \otimes \Z_p)^{\times}_{\mathrm{tor}}$ which is trivial on the $2$-torsion if $p=2$. We can assume without loss of generality that $\chi_r$ is the finite part of a classical weight $(m,0) \in \Z[\Sigma_L]$. As an upshot, all classical specialisations of $r$ will only contribute to the $\sigma_1$ direction.
By abuse of notation, we will usually replace $u_{k_{\sigma_1}}$ by $k_{\sigma_1}$ in the iteration formula and similarly for $r$.

\begin{remark}
    By construction, the element $1 +\beta_n Z_{\sigma_i}$ can be identified with $\omega_{\can, i}$ and $V_{\sigma_i}$ can be identified with $p^{-1}\omega_{\can, i}^{-1} \cdot \eta_{\can, i}$, where $\omega_{\can, i}, \eta_{\can, i}$ are the $\sigma_i$-part of the canonical basis of the first de Rham cohomology of the universal abelian surface over the Tate object.
    We will freely use these identifications in our computations later.
\end{remark}

We also recall that the diagonal restriction $\zeta^*$ maps an element of the form $a \cdot \prod_\sigma V_\sigma^{h_\sigma} (1+ \beta_n Z)^k $ to $\zeta^*(a) \cdot V^{h_{\sigma_1}+ h_{\sigma_2}} (1+\beta_n Z)^{k_{\sigma_1} +k_{\sigma_2}}$.
As we will not need to deal with the $q$-expansion $\zeta^* (a)$, we prefer not to unwind its explicit formula.

Lastly, we recall the formula for the overconvergent projection $H^\dagger$, or rather, its restriction to $\Fil_n \mathbb{W}_k$.
Since we assume that $F = \mathbb{Q}$, we only need to consider the case of modular curves, which is studied in \cite{andreatta2021triple}.
Given any section $\gamma$ of $\Fil_n \mathbb{W}_k$ with $q$-expansion $\sum_{i \leq n} \gamma_i(q) p^iV^i (1 +\beta_n Z)^k$, the $q$-expansion of $H^{\dagger} (\gamma)$ is given by (c.f. \cite[Proposition~3.37]{andreatta2021triple})
\begin{equation} \label{equation: overconvergent projection}
    \sum_{i=0}^n (-1)^i \frac{d^i \gamma_i (q) }{(k-2-i +1) (k-2-i+2) \cdots (k-2)} \cdot (1+\beta_n Z)^k.
\end{equation}

\subsection{Specialisations at classical weights}

Let $\omega_\mathbf{g}, \omega_\mathbf{f}$ be Coleman families such that we can define the twisted triple product $p$-adic $L$-function 
\begin{equation*}
    \mathscr{L}_p(\breve{\omega}_\mathbf{g},\breve{\omega}_\mathbf{f}) = \frac{\langle H^{\dagger,\leq a}(\zeta^*\nabla^r\breve{\omega}_\mathbf{g}^{[\Sh{P}]}), \breve{\omega}_\mathbf{f}^*\rangle}{\langle \omega_\mathbf{f}^*, \omega_\mathbf{f}^*\rangle}
\end{equation*}
as in Definition \ref{definition: twisted triple product}.
For simplicity, we will drop the $\breve{\phantom{e}}$ notations throughout this section.

Let 
\begin{equation}
    \mathbf{g}(q) \cdot (1+ \beta_n Z)^{k_g} \quad (\textrm{resp.}\  \mathbf{g}^{[\mathcal{P}]}(q) \cdot (1+\beta_n Z)^{k_g} )
\end{equation}
be the $q$-expansion of $\omega_\mathbf{g}$ (resp. $\omega_\mathbf{g}^{[\mathcal{P}]}$).

Fix a balanced pair $(\ell, k )$ with $k = \ell_1 + \ell_2 -2 (s+1)$, $s \in \mathbb{Z}_{\geq 0}$.
The specialization of $\mathscr{L}_p (\omega_\mathbf{g}, \omega_\mathbf{f})$ at $(\ell, k )$ takes the form
\begin{equation*}
    \frac{ \langle H^{\dagger, \leq a} (\zeta^* \nabla^{(-s-1, 0)} g_{\ell}^{[\mathcal{P}]}), f_k^* \rangle}{\langle f_k^*, f_k^* \rangle}.
\end{equation*}

First, we compute the (polynomial) $q$-expansion of $\nabla^{(-s-1, 0)} g_{\ell}^{[\mathcal{P}]} = \nabla_1^{-s-1} g_{\ell}^{[\mathcal{P}]}$.
By formula (\ref{equation: iteration}), one gets 
\begin{align*}
     \nabla_1^{-s-1}& g_{\ell}^{[\mathcal{P}]}(q) \cdot (1+\beta_n Z)^{\ell} \\
    &= \sum_{j=0}^{\infty} p^j \binom{-s-1}{j} \prod_{i =0}^{j-1} (\ell_1 -s -2 -i) d_{1}^{-s-1-j} g_{\ell}^{[\mathcal{P}]}(q) \cdot V^j_{\sigma_1} (1+\beta_n Z)^{\ell+ 2(-s-1, 0)} \\
    &= \sum_{j=0}^{\ell_1-s-2} \binom{-s-1}{j} \prod_{i =0}^{j-1} (\ell_1 -s -2 -i) d_{1}^{-s-1-j} g_{\ell}^{[\mathcal{P}]}(q) \cdot \omega_{\can}^{\ell+ (-2s-2 -j, 0)} \eta_{\can}^{(j, 0)}.
\end{align*}

Its diagonal pull-back $\zeta^*( \nabla^{(-s-1, 0)} g_{\ell}^{[\mathcal{P}]})$ then takes the following form:
\begin{equation}
    \sum_{j=0}^{\ell_1-s-2} \binom{-s-1}{j} \prod_{i =0}^{j-1} (\ell_1 -s -2 -i) \zeta^* \left (d_{1}^{-s-1-j} g_{\ell}^{[\mathcal{P}]}(q) \right ) \cdot \omega_{\can}^{k-j} \eta_{\can}^{j}.
\end{equation}

Later, we will restrict ourselves to classical pair $(\ell, k)$ satisfying the following assumption.
\begin{assumption} \label{assumption: classical specialisation}
    We will assume that the specialisation of $\mathbf{g}$ at $\ell$ is a $p$-stabilisation of a classical form denoted by $g_\ell$.
    Similarly, we assume that the specialisation of $\mathbf{f}$ at $k$ is a slope $\leq a$ $p$-stabilisation of a classical form denoted by $f_k$.
\end{assumption}

\subsection{The Abel--Jacobi images} \label{subsection: AJ images}

In this subsection, we examine the syntomic Abel--Jacobi image of the generalized \textit{Hirzebruch--Zagier} cycle.
The arguments follow closely those in \cite[\S5]{blanco-chacón_fornea_2020} and we will give an analogue of Theorem 5.14 in \textit{loc. cit.}.

 
Let $g \in S^{G_L}(K_{11}(\mathfrak{A}\Sh{O}_L), (v, n), K)$ and $f \in S^{G_F}(K_{11}(\mathfrak{A}), (w, m), K)$ be two eigenforms with coefficients in a finite extension $K$ of $\mathbb{Q}_p$, where $p$ does not divide $\mathfrak{A} \cdot d_{L / \Q}$.
Let $\ell = (\ell_1, \ell_2) = 2v +n t_L$, and $k = 2w +m$, and assume that $k$ and $(\ell_1, \ell_2)$ are balanced.
In particular, we will assume that $k = \ell_1 + \ell_2 - 2 (s+1)$ for some $s \geq 0$.

We first need to recall several geometric objects and idempotents that give rise to the desired cohomology spaces. 
We would like to refer to \cite[\S4 \& \S5.1]{blanco-chacón_fornea_2020} for detailed definitions.
Given $g, f$ as above, we have the following objects:
\begin{enumerate}
    \item the universal elliptic curve $\mathcal{E}$ over the modular curve $\mathrm{Sh}_{K_{11}(\mathfrak{A})}(\GL{2, \mathbb{Q}} ) = Y_{11}(\mathfrak{A})$;
    \item if $k >2$, we let $W_{k-2}$ be a smooth compactification of the $(k-2)$th-fold fiber product of $\mathcal{E}$ over $\mathrm{Sh}_{K_{11}(\mathfrak{A})}(\GL{2, \mathbb{Q}} )$;
    \item there exists an idempotent $\theta_k \in \mathrm{CH}^{k-1} (W_{k-2} \times W_{k-2})$ such that $\theta_k^* H^{k-1}_{\dR}(W_{k-2} / K)$ is functorially isomorphic to the parabolic cohomology $H^1_{\mathrm{par}} (X_{11}(\mathfrak{A}),\mathcal{F}_{\GL{2, \mathbb{Q}}}^{(k, k-1)})$ with Hodge filtrations (c.f. \cite[Proposition~5.3]{blanco-chacón_fornea_2020}).
    Here, $X_{11}(\mathfrak{A})$ is the compactified modular curve and the vector bundle $\mathcal{F}_{\GL{2, \mathbb{Q}}}^{(k, k-1)}$ defined in \cite[\S4]{blanco-chacón_fornea_2020} is equal to $\Sym^{k-2} \mathcal{H}_{\mathcal{E}}$ in \S\ref{subsection: Hilbert modular forms}. Therefore, $H^1_{\mathrm{par}} (X_{11}(\mathfrak{A}),\mathcal{F}_{\GL{2, \mathbb{Q}}}^{(k, k-1)})$ is the first hypercohomology group of the complex 
    \[
    \Sym^{k-2} \Sh{H}_{\Sh{E}}(-D)) \xrightarrow{\nabla} \Sym^{k-2}\Sh{H}_{\Sh{E}}(-D) \otimes \Omega^1_{X}(\log(D)).
    \]
    \item the universal abelian scheme $\mathcal{A}$ over the Shimura variety $\mathrm{Sh}_{K_{11}(\mathfrak{A}\Sh{O}_L)}(G_L^*)$ (c.f. Remark \ref{remark: Hilbert modular scheme, G*});
    \item if $\ell > 2 t_L$, we let $U_{\ell-4}$ be a smooth compactification of the $(|\ell|-4)$-fold fiber product of $\mathcal{A}$ over $\mathrm{Sh}_{K_{11}(\mathfrak{A}\Sh{O}_L)}(G_L^*)$;
    \item there exists an idempotent $\theta_\ell \in \mathrm{CH}^{2(|\ell|-3)} (U_{\ell-4} \times U_{\ell-4})$ such that $\theta_\ell^* H^{i+|\ell| -4}_{\dR} (U_{\ell-4}/ K)$ is functorially isomorphic to the hypercohomology $\mathbb{H}^i (\mathrm{Sh}_{K_{11}(\mathfrak{A}\Sh{O}_L)} (G_L^*), \mathrm{DR}^\bullet (\mathcal{F}_{G_L^*}^{(\ell, |\ell-t_L|)} ))$ with Hodge filtrations (c.f. \cite[Proposition~5.4]{blanco-chacón_fornea_2020}). 
    Here, the sheaf $\mathcal{F}_{G_L^*}^{(\ell, |\ell-t_L|)}$ defined in \cite[\S4.2]{blanco-chacón_fornea_2020} is equal to
    $\Sym^{\ell-2 t_L} \mathcal{H}_{\mathcal{A}}$ in \S\ref{subsection: Hilbert modular forms}. 
    Similarly as in point (3), $\mathbb{H}^i (\mathrm{Sh}_{K_{11}(\mathfrak{A}\Sh{O}_L)} (G_L^*), \mathrm{DR}^\bullet (\mathcal{F}_{G_L^*}^{(\ell, |\ell-t_L|)} ))$ is the $i$-th hypercohomology group of the complex 
    \[
    \Sym^{\ell-2t_L}(-D) \xrightarrow{\nabla} \Sym^{\ell - 2t_L}(-D) \otimes \Omega^1(\log(D))_{\mathrm{Sh}^{\mathrm{tor}}_{K_{11}(\mathfrak{A}\Sh{O}_L)}(G_L^*)} \xrightarrow{\nabla} \Sym^{\ell - 2t_L}(-D) \otimes \Omega^2(\log(D))_{\mathrm{Sh}^{\mathrm{tor}}_{K_{11}(\mathfrak{A}\Sh{O}_L)}(G_L^*)}.
    \]
\end{enumerate}

Now, we recall the definition of the \textit{Hirzebruch--Zagier} cycle associated with a balanced tuple $(\ell, k)$.
Suppose first that $(\ell, k) \neq (2t_L, 2)$ and $\ell$ is not parallel. 
Let $\gamma = \frac{|\ell|+k-6}{2}$.
Consider the finite map
\begin{align*}
    \varphi: \mathcal{E}^\gamma &\longrightarrow \mathcal{A}^{|\ell|-4} \times_K \mathcal{E}^{k-2} \\
    (x; P_1, \ldots, P_\gamma) &\mapsto (\zeta(x), P_1' \otimes 1, \ldots , P_{|\ell|-4}' \otimes 1 ; x, P_{|\ell|-3}', \ldots , P_{2 \gamma}' )
\end{align*}
where we label $(P_1', \ldots , P_{2 \gamma}') = (P_1, \ldots , P_{\gamma}, P_1, \ldots, P_{\gamma})$ and $P_i' \otimes 1$ is the image of $P_i'$ under the natural map $\mathcal{E} \otimes _\mathbb{Z} \mathcal{O}_L \rightarrow \mathcal{A}$.
As explained in \cite[\S5.2.1]{blanco-chacón_fornea_2020}, 
there are smooth projective models $\mathscr{W}_\gamma, \mathscr{U}_{\ell-4}, \mathscr{W}_{k-2}$ of $W_\gamma, U_{|\ell|-4}, W_{k-2}$ respectively, over an open of $\Spec(\mathcal{O}_K)$, such that the map $\varphi$ can be extended to a morphism $\mathscr{W}_\gamma \rightarrow \mathscr{U}_{\ell-4} \times \mathscr{W}_{k-2}$ which we will still denote by $\varphi$.
In addition, the correspondences $\theta_k, \theta_\ell$ can also be extended to these smooth models by spreading out.

\begin{definition}
    Let $\ell \in \Z [\Sigma_L]$, $\ell > 2 t_L$ be a non-parallel weight and $k \in \Z_{>2}$ such that $(\ell, k)$ is a balanced tuple.
    The {Hirzebruch--Zagier} cycle $\Delta_{\ell, k} $ for weight $(\ell, k)$ is defined to be 
    \begin{equation*}
        ({\theta}_\ell, {\theta}_k)^*{\varphi}_*(\mathscr{W}_\gamma) \in \CH^{d-\gamma-1}(\mathscr{U}_{\ell-4} \times \mathscr{W}_{k-2}),
    \end{equation*}
    where $d = 2|\ell| + k-7$ is the relative dimension of the variety $\mathscr{U}_{\ell-4} \times \mathscr{W}_{k-2}$.
\end{definition}

When $\ell=2 t_L$ and $k=2$, we need a different approach as follows.
Assume that $K$ is large enough such that $U_{0/ K}$ (resp. $W_{0/K}$) is a disjoint union $U_{0/K} = \coprod U_{0, i}$ (resp. $W_{0/K} = \coprod W_{0. j}$) of geometrically connected components.
We pick a $K$-rational point $a_i \in U_{0, i}$ (resp. $b_j \in W_{0, j}$) for each component.

Set $Z = U_0 \times W_0$. 
For each pair $(i,j)$ of the indices, we define $q_{i,j}: Z \rightarrow Z$ to be the map which is the identity map on $U_{0, i} \times W_{0, j}$ and sends other components $U_{0, i'} \times W_{0, j'}$ to the point $(a_{i'}, b_{j'})$.
Similarly, one defines $q_{a_i, j} : Z \rightarrow \{a_i\}  \times W_0$, $q_{i, b_j} Z\rightarrow U_0 \times \{b_j\}$ and $q_{a_i, b_j}: Z \rightarrow \{a_i\} \times \{ b_j \}$.
One now considers the graphs $P_{i,j} := \mathrm{graph}(q_{i,j})$ and similarly $P_{a_i, j}$, $P_{i, b_j}$ and $P_{a_i, b_j}$.
These graphs define a correspondence
$$P:= \sum_{i,j} (P_{i,j} - P_{a_i,j} - P_{i, b_j} + P_{a_i, b_j})$$
in $\CH^3 (Z \times Z)$, which acts on $\CH^\bullet(Z)$ by $P_* = \pr_{2, *}(P \cdot \pr_1^*)$.
In other words, for any cycle $S \in \CH^\bullet(Z)$, we have
\begin{equation*}
    P_*(S) = \sum_{i,j} \left [ (q_{i,j})_* - (q_{a_i,j})_* - (q_{i,b_j})_* + (q_{a_i,b_j})_* \right ](S).
\end{equation*}
By an abuse of notation, we also let ${P}$ be the correspondence on $\mathscr{U}_0 \times \mathscr{W}_0$ defined over some open of $\Spec (\mathcal{O}_K)$ by spreading out.
Let $\varphi : \mathscr{W}_0 \rightarrow \mathscr{U}_0 \times \mathscr{W}_0$ be the map whose first component is the diagonal embedding and the second component is the identity map.

\begin{definition}
    The {Hirzebruch--Zagier} cycle for weight $(2t_L, 2)$ is defined to be 
    \begin{equation*}
        \Delta_{2t_L, 2} := {P}_* \circ {\varphi}_* (\mathscr{W}_0) \in \CH^2 (\mathscr{U}_0 \times \mathscr{W}_0).
    \end{equation*}
    One can check that $\Delta_{2t_L,2}$ is de Rham null-homologous (c.f. \cite[Proposition~5.7]{blanco-chacón_fornea_2020}).
\end{definition}

After introducing the cycle, we are now interested in computing the syntomic Abel--Jacobi image $\AJ_p(\Delta_{\ell, k})(\pi_1^* \omega_g \cup \pi_2^* \eta_f)$ and expressing it in terms of overconvergent modular forms.
The computations are similar to the case of triple product $p$-adic $L$-functions and we will frequently refer to \cite{DR} and \cite{tripleL_and_GZ_formula_finite_slope}.

First, we need to specify the elements $\omega_g$ and $\eta_f$. Recall from \S\ref{subsection: Hilbert modular forms} that the modular form $g$ of weight $(v,n)$ defines a section of $\omega_{\Sh{A}}^{\ell} \otimes (\wedge^2_{\Sh{O}_L} \Sh{H}_{\Sh{A}})^{-v}$, and that $\eta_{\lambda, \mathfrak{d}_L^{-1}}$ is a canonical generator of $\wedge^2_{\Sh{O}_L} \Sh{H}_{\Sh{A}}$ over $X^{G_L, \mathfrak{d}_L^{-1}} = \mathrm{Sh}_{K_{11}(\mathfrak{A}\Sh{O}_L)} (G_L^*)$. Therefore $g \cdot \eta_{\lambda, \mathfrak{d}_L^{-1}}^{v-t_L} \in \omega_{\Sh{A}}^{\ell} \otimes (\wedge^2_{\Sh{O}_L} \Sh{H}_{\Sh{A}})^{-t_L}$  defines a differential form $\omega_g$ over $\mathrm{Sh}_{K_{11}(\mathfrak{A}\Sh{O}_L)} (G_L^*)$, which can be viewed as an element in $\Fil^{|\ell|-2} \theta_\ell^* H^{|\ell|-2}_{\dR} (U_{\ell-4} /K)$.
For $\eta_f$, it is the unique element in the $f$-isotypic part of $\theta_k^* H^{k-1}_{\dR}(W_{k-2} / K)$ such that $\mathrm{Fr}_p (\eta_f) = \alpha_{f}^* \eta_f$ and for any cusp form $h$, we have $\langle \omega_h, \eta_f \rangle_{\dR} = \frac{\langle \omega_h, f^*\rangle}{ \langle f^*, f^*\rangle}$ (c.f. \cite[p.15]{tripleL_and_GZ_formula_finite_slope}). 
In what follows, we will simply write $\omega =\omega_g$ and $\eta = \eta_f$.

In the case $(\ell, k) = (2 t_L, 2)$, it follows from 
$H^1_{\fp}(\Spec (\calO_{K}), 0 ) = 0 = H^2_{\fp}(\Spec (\calO_{K}), 2 )$ that (c.f. \cite[\S~4]{reg-formula})
\begin{equation}
    \AJ_p(\Delta_{2t_L, 2})(\pi_1^* {\omega} \cup \pi_2^* \eta) = \tr_{\mathscr{W}_0, \fp}( \varphi^*( \pi_1^* \tilde{\omega} \cup \pi_2^* \tilde{\eta})) = \tr_{\mathscr{W}_0, \fp} (\zeta^*\tilde{\omega} \cup \tilde{\eta}),
\end{equation}
where $\tilde{\omega}, \tilde{\eta}$ are the lifts in the finite polynomial cohomology (c.f. \cite{fp} and the explanation below).

For the general case, we recall the following numbers (c.f. \cite[(25)]{blanco-chacón_fornea_2020}):
\begin{enumerate}
    \item the dimension of $U_{\ell-4} \times W_{k-2}$ is $d = 2 |\ell|+k-7$;
    \item the cycle $\Delta_{\ell, k}$ is of dimension $\gamma+1$, with $\gamma = \frac{|\ell| +k-6}{2} $;
    \item a positive integer $s:= \frac{|\ell|-k-2}{2}$, which satisfies the equation $|\ell| -2-s = \gamma +2$.
\end{enumerate}

For the element $\eta$, there is an isomorphism $\pr_{\fp}: H^{k-1}_{\fp}(\mathscr{W}_{k-2}, 0) \cong H^{k-1}_{\dR}(W_{k-2})$.
We hence let $\tilde{\eta}$ denote the pre-image $\pr_{\fp}^{-1}(\eta)$ which is invariant under $\theta_k^*$.
For the class $\omega$, we consider the following commutative diagram with exact rows

\begin{equation*}
\begin{tikzcd}
0 \arrow[r] & H^{|\ell|-3}_{\dR}(U_{\ell-4})/ \Fil^{|\ell|-2-s} \arrow[r, "\iota_{\fp} "] \arrow[d, "\theta_\ell^* =0"] & H^{|\ell|-2}_{\fp}( \mathscr{U}_{\ell-4}, |\ell| -2 -s) \arrow[r, "\pr_{\fp}"] \arrow[d, "\theta_\ell^*"] & \Fil^{|\ell|-2-s} H^{|\ell|-2}_{\dR}(U_{\ell-4}) \arrow[r] \arrow[d, "\theta_\ell^*"] \arrow[dl, dashed] &0 \\
0 \arrow[r] & H^{|\ell|-3}_{\dR}(U_{\ell-4})/ \Fil^{|\ell|-2-s} \arrow[r, "\iota_{\fp} "] & H^{|\ell|-2}_{\fp}( \mathscr{U}_{\ell-4}, |\ell| -2 -s) \arrow[r, "\pr_{\fp}"] & \Fil^{|\ell|-2-s} H^{|\ell|-2}_{\dR}(U_{\ell-4}) \arrow[r] &0
\end{tikzcd}.
\end{equation*}
This diagram then implies that one can find a canonical lift $\tilde{\omega} \in \pr_{\fp}^{-1}(\omega)$ such that $\theta_\ell^* \tilde{\omega} = \tilde{\omega}$.
Now, one computes
\begin{align*}
    \AJ_p(\Delta_{\ell, k})(\pi_1^* \omega \cup \pi_2^* \eta) &= \langle  \cl_{\fp}(\Delta_{\ell, k}), \pi_1^* \tilde{\omega} \cup \pi_2^* \tilde{\eta} \rangle_{\fp} \\
    &= \langle (\theta_\ell^*, \theta_k^*) \varphi_* \cl_{\fp}(\mathscr{W}_\gamma), \pi_1^* \tilde{\omega} \cup \pi_2^* \tilde{\eta} \rangle_{\fp} \\
    &= \langle \cl_{\syn}(\mathscr{W}_\gamma), \varphi^* (\pi_1^* \tilde{\omega} \cup \pi_2^* \tilde{\eta}) \rangle_{\fp}\\
    &= \tr_{\mathscr{W}_\gamma, \fp}(\varphi^* (\pi_1^* \tilde{\omega} \cup \pi_2^* \tilde{\eta})).
\end{align*}
Set $\varphi_i := \pi_i \circ \varphi$ for $i = 1, 2$.
The pullback $\varphi_1^* \tilde{\omega} \in H^{|\ell|-2}_{\fp}(\mathscr{W}_\gamma, |\ell|-2-s)$ can be viewed as an element in $H^{|\ell|-3}_{\dR}(W_\gamma)$ via the isomorphism $\iota_{\fp}$, since $\Fil^{|\ell|-2-s} H^{|\ell|-2}_{\dR}(W_\gamma) =0$.
If we set $\Upsilon(\omega) = \iota_{\fp}^{-1}(\varphi_1^* \tilde{\omega})$, then we can rewrite 
\begin{equation}
\begin{split}
    \AJ_p(\Delta_{\ell, k})(\pi_1^* \omega \cup \pi_2^* \eta) &= \tr_{\mathscr{W}_\gamma, \fp}(\varphi_1^* \tilde{\omega} \cup \varphi_2^* \tilde{\eta})) = \tr_{\mathscr{W}_{\gamma, \dR}} (\Upsilon(\omega) \cup \varphi_2^*\eta) \\
    &= \langle \Upsilon(\omega), \varphi_2^*\eta \rangle_{\mathscr{W}_{\gamma, \dR}} = \langle \varphi_{2, *}\Upsilon(\omega), \eta \rangle_{\mathscr{W}_{k-2, \dR}},
\end{split}
\end{equation}
where the second equality is by the compatibility between cup products of finite polynomial cohomology and de Rham cohomology (c.f. \cite[Proposition~2.5]{fp}).
Our goal now is to describe $\varphi_{2, *}\Upsilon(\omega)$ in terms of overconvergent modular forms, which will be done in the next subsection.

\subsection{Computations: the split case}
In this subsection, we consider the case where $p$ splits as $\mathfrak{p}_1 \mathfrak{p}_2$ in the real quadratic field $L$.

To study $\varphi_{2, *}\Upsilon(\omega)$, one needs to fix several polynomials for finite polynomial cohomology. 
Set $T = T_1 T_2$ and
\begin{align*}
    P_1(T_1) &= (1 -\alpha_1 T_1)(1-\beta_1 T_1), \\
    P_2(T_2) &= (1 -\alpha_2 T_2)(1-\beta_2 T_2), \\
    P(T) &= (1 - \alpha_1 \alpha_2 T)(1 - \alpha_1 \beta_2 T)(1 - \beta_1 \alpha_2 T)(1 - \beta_1 \beta_2 T),
\end{align*}
where $\alpha_i, \beta_i$ are the eigenvalues of $T_0({\mathfrak{p}_i})$.
We here refer to \cite[p.1986]{blanco-chacón_fornea_2020} for the explicit choices of the polynomials $a_2(T_1, T_2)$ and $b_1(T_1, T_2)$ such that
\begin{equation*}
    P(T_1T_2) = a_2(T_1, T_2) P_1(T_1) + b_1(T_1, T_2) P_2(T_2).
\end{equation*}
The polynomial $a_2(T_1, T_2)$ is composed of monomials $T_1^x T_2^y$ with $x \leq y$, while $b_1(T_1, T_2)$ is composed of monomials $T_1^x T_2^y$ with $x > y$.
Since $P(T_1 T_2)$ is symmetric in the indices $1$ and $2$,
one can also write
\begin{equation*}
    P(T_1T_2) = a_1(T_1, T_2) P_2(T_2) + b_2(T_1, T_2) P_1(T_1),
\end{equation*}
where $a_1$ (resp. $b_2$) is obtained from $a_2(T_1, T_2)$ (resp. $b_1$) by swapping all the indices.
Finally, we let these polynomials act on the space of overconvergent modular forms by letting $T_i$ act as $V_0(\mathfrak{p}_i)$ (cf. \cite[\S6]{twisted_tripleL_finite_slope}), where $V_0(\mathfrak{p}_i)$ satisfies $U_0(\mathfrak{p}_i)V_0(\mathfrak{p}_i) = \mathrm{id}$. We remark that $V_0(\mathfrak{p}_i) (g\cdot \eta_{\lambda, \mathfrak{d}_L^{-1}}^w) = (V_0(\mathfrak{p}_i)g)\cdot \eta_{\lambda, \mathfrak{d}_L^{-1}}^w$ for any $w \in \Z[\Sigma_L]$ (cf. \cite[Lemma 2.6]{blanco-chacón_fornea_2020}). The readers should note that the definitions of the operator $V_0(\mathfrak{p}_i)$ on either side of this equality are different, and depend on the component $v$ of the weight $(v,n)$, as in the definition of the normalised Hecke operator $T_0(\mathfrak{p}_i)$. Since the roots of the $T_0(\mathfrak{p}_i)$ Hecke polynomial acting on the $g$-isotypic component does not depend on whether we view $g$ as a section of $\omega_{\Sh{A}}^{\ell} \otimes (\wedge^2_{\Sh{O}_L} \Sh{H}_{\Sh{A}})^{-v}$ or of $\omega_{\Sh{A}}^{\ell} \otimes (\wedge^2_{\Sh{O}_L} \Sh{H}_{\Sh{A}})^{-t_L}$ (as $g\cdot \eta_{\lambda, \mathfrak{d}_L^{-1}}^{v-t_L})$, we find that $P(V_0(p))$ kills the differential form $\omega_g$.




Let $g$ be of weight $\ell = (\ell_1, \ell_2)$ as before.
For $j= 1, 2$, there are $\mathfrak{p}_j$-depleted overconvergent cusp forms $g_1^{(j)}, g_2^{(j)}$ such that $g^{[\mathfrak{p}_j]} = P_j (V_0(\mathfrak{p}_j)) g = d_1^{\ell_1-1} g_1^{(j)} + d_2^{\ell_2-1} g_2^{(j)}$ (c.f. \cite[Theorem~4.5]{blanco-chacón_fornea_2020}).
One can then write 
\begin{equation*}
    P(V_0(p)) g = d_1^{\ell_1-1} (h)  + d_1^{\ell_1-1} (h_1) + d_2^{\ell_2 -1} (h_2),
\end{equation*}
where $h = (1 -\alpha_1 \beta_1 \alpha_2 \beta_2 V_0(p)^2 ) d_1^{1-\ell_1} g^{[\mathcal{P}]}$, $h_1 = b_2 g_1^{(1)} + b_1 g_1^{(2)}$ and $h_2 = b_2 g_2^{(1)} + b_1 g_2^{(2)}$ are all overconvergent forms. 
We set $Q(T) = P(p^{|t_L-\ell|} T)$ so that $Q (\Frob_p) g =  P(V_0(p)) g$. 

By \cite[Proposition~5.11]{blanco-chacón_fornea_2020}, one can find nearly overconvergent forms $H, H_1, H_2$ such that 
\begin{equation*}
    \nabla (H) = d_1^{\ell_1 -1} h, \nabla (H_1) = d_1^{\ell_1 -1} h_1, \textrm{ and } \nabla (H_2) = d_2^{\ell_2 -1} h_2.
\end{equation*}
In particular, if we set $G= H+ H_1+ H_2$, then the pair $[ \omega_g, G ]$ represents the class $\tilde{\omega}$ in $H^{|\ell|-2}_{\fp, Q}( \mathscr{U}_{\ell-4}, |\ell| -2 -s)$.

Let us now write down explicitly the expansions of $H, H_1, H_2$. 
Following a similar computation as in \cite[Proposition~5.11]{blanco-chacón_fornea_2020} (and we ignore the terms $w_j$ corresponding to the $\Lambda^2 \mathcal{H}_{\mathcal{A}}$-factor), we have

\begin{align}
    H &= \sum_{i = 0}^{\ell_1 -2} (-1)^i \frac{(\ell_1 -2 )!}{(\ell_1-2 -i)!} d_1^{\ell_1-2 -i} h \times \left ( \omega_{\can, 1}^{\ell_1-2-i} \eta_{\can, 1 }^i \otimes \omega_{\can, 2}^{\ell_2-2} \right ) \otimes \frac{dq_2}{q_2}, \\
    H_1 &= \sum_{i = 0}^{\ell_1 -2} (-1)^i \frac{(\ell_1 -2 )!}{(\ell_1-2 -i)!} d_1^{\ell_1 -2 -i} h_1 \times \left ( \omega_{\can, 1}^{\ell_1-2-i} \eta_{\can, 1 }^i \otimes \omega_{\can, 2}^{\ell_2-2} \right ) \otimes \frac{dq_2}{q_2}, \\
    H_2 &= \sum_{i = 0}^{\ell_2 -2} (-1)^i \frac{(\ell_2 -2 )!}{(\ell_2-2 -i)!} d_2^{\ell_2-2 -i} h_2 \times \left ( \omega_{\can, 1}^{\ell_1-2} \otimes \omega_{\can, 2}^{\ell_2-2-i} \eta_{\can, 2}^i \right ) \otimes \frac{dq_1}{q_1}.
\end{align}

Now, we examine the polynomial $q$-expansion of these forms under $\tau: = \theta_k^* \varphi_{2, *} \varphi_1^* \theta_\ell^*$, which is analogous to the map denoted by $\mathrm{pr}_{r_1}$ in \cite[Remark~3.5]{tripleL_and_GZ_formula_finite_slope}.
As explained in \textit{loc. cit.}, the map $\tau$ undergoes a symmetrization-desymmetrization process, which produces certain factorial numbers.
After a careful computation, we have
\begin{align}
    \tau H &= \sum_{j =s}^{\ell_1-2} (-1)^j j! \binom{\ell_1-2-s}{j-s} \ \zeta^* (d_1^{\ell_1-2 -j} h) \cdot \omega_{\can}^{k-2 - j +s} \eta_{\can}^{j-s} \cdot \frac{dq}{q}, \\
    \tau H_1 &= \sum_{j =s}^{\ell_1-2} (-1)^j j! \binom{\ell_1-2-s}{j-s} \ \zeta^* (d_1^{\ell_1-2 -j} h_1) \cdot \omega_{\can}^{k-2 - j +s} \eta_{\can}^{j-s} \cdot \frac{dq}{q}, \\
    \tau H_2 &= \sum_{j =s}^{\ell_2-2} (-1)^j j! \binom{\ell_2-2-s}{j-s} \ \zeta^* (d_2^{\ell_2-2 -j} h_2) \cdot \omega_{\can}^{k-2 - j +s} \eta_{\can}^{j-s} \cdot \frac{dq}{q}.
\end{align}
Under the unit root splitting, one is left with only the first term $j=s$, which recovers the result of \cite[Proposition~5.12]{blanco-chacón_fornea_2020}.
The weight $k$ form $\tau(G) = \tau(H + H_1 + H_2)$ will play a similar role as $\mathrm{pr}_{r_1} (G^{[p]} \times h)$ in the case of triple product $p$-adic $L$-functions in \cite[\S~3]{tripleL_and_GZ_formula_finite_slope}.

By the short exact sequence of finite polynomial cohomology,
$\varphi_{2, *} \Upsilon(\omega)$ and $\tau(G)$ are related by the identity $Q(\mathrm{Frob}_p) (\varphi_{2, *} \Upsilon(\omega)) = \tau(G)$.
Consequently, the Abel--Jacobi image $\AJ_p(\Delta_{\ell, k})(\pi_1^* \omega \cup \pi_2^* \eta) $ can be expressed in terms of $\tau (G)$.
Namely, one has 
\begin{align*}
    \langle Q(\mathrm{Frob}_p) \varphi_{2, *} \Upsilon(\omega), \eta \rangle_{\dR} &= \langle \tau (G), \eta \rangle_{\dR} = \langle  e^{\leq a} \tau (G), \eta \rangle_{\dR} \\
    & = \frac{\langle e^{\leq a} (\tau H + \tau H_1 + \tau H_2), f^* \rangle}{\langle f^*, f^*\rangle}, 
\end{align*}
where the second equality follows from the fact that $\eta$ lies in the slope $\leq a$-part of the Frobenius and \cite[Lemma~5.3]{tripleL_and_GZ_formula_finite_slope}.

Now we examine the components $e^{\leq a } \tau H_1$ and $e^{\leq a } \tau H_2$.
Recall from \cite[Lemma 7.22]{twisted_tripleL_finite_slope} that we have the equation
\begin{equation*}
    U \zeta^* ( V_0(\mathfrak{p_2} )g^{[\mathfrak{p}_1]} ) = 0 = U \zeta^* ( V_0(\mathfrak{p_1} )g^{[\mathfrak{p}_2]} )
\end{equation*}
for any overconvergent form $g$, which is derived essentially from examining the $q$-expansion.
Recall also that $h_1 = b_2 g_1^{(1)} + b_1 g_1^{(2)}$.
As $g_1^{(1)}$ is $\mathfrak{p}_1$-depleted and $b_2$ can be written as a polynomial in $V_0(p)$ and $V_0(\mathfrak{p}_2)$ which is divisible by $V_0(\mathfrak{p}_2)$, we see that 
$U \zeta^* (b_2 g_1^{(1)}) = 0$ and similarly $U \zeta^* (b_1 g_1^{(2)}) = 0$.
Furthermore, the slope projector $e^{\leq a}$ can be written as a convergent power series in $U$ with no constant term, so we have 
\begin{equation*}
    e^{\leq a} \zeta^* ( b_2 g_1^{(1)}) =  e^{\leq a} \zeta^* (b_1 g_1^{(2)}) = e^{\leq a} \zeta^* (h_1) = 0.
\end{equation*}
In fact, $e^{\leq a} \zeta^* ( d_1^n h_1) =0$ for all $n \in \mathbb{Z}$ since $d_1^n g_1^{(i)}$ is still $\mathfrak{p}_i$-depleted.
A similar argument also holds for $ e^{\leq a} \zeta^* ( d_2^n h_2) $.
As a consequence, one gets 
\begin{equation*}
    e^{\leq a} (\tau H_1 + \tau H_2)=0.
\end{equation*}

Now, we only need to focus on $\tau H$. 
The computation of the Euler factors is identical to that of \cite[Theorem~5.14]{blanco-chacón_fornea_2020}, except that we are using different normalisations of Hecke operators.
In conclusion, we have the following result.

\begin{theorem}
    Suppose $p$ splits in $L$. We have
    \begin{equation}
        \AJ_p(\Delta_{\ell, k}) (\pi_1^* \omega_g \cup \pi_2^* \eta_f ) = \frac{\mathscr{E}_{0,p}(g,f^*) }{\mathscr{E}_p(g,f^*)} \cdot \frac{\langle e^{\leq a}  H', f^*  \rangle }{ \langle f^*, f^* \rangle },
    \end{equation}
    where $H' =  \sum_{j =s}^{\ell_1-2} (-1)^j j! \binom{\ell_1-2-s}{j-s} \ \zeta^* (d_1^{-1 -j} g^{[\mathcal{P}]}) \cdot \omega_{\can}^{k-2 - j +s} \eta_{\can}^{j-s} \cdot \frac{dq}{q}$ is viewed as a nearly overconvergent form, 
    $\mathscr{E}_{0,p}(g, f^*) = 1 -\alpha_1\beta_1 \alpha_2 \beta_2 ({(\alpha_{f}^{*})}^{-1} p^{-s-1})^2 $, and $\mathscr{E}_p(g, f^*) = P({(\alpha_{f}^{*})}^{-1} p^{-s-1})$ are as in Theorem \ref{Theorem: interpolation formula} with $t$ replaced by (-s-1,0).
\end{theorem}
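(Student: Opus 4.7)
The plan is to start from the equation $Q(\Frob_p)(\varphi_{2,*}\Upsilon(\omega)) = \tau(G) = \tau H + \tau H_1 + \tau H_2$ that has just been established, together with the vanishing $e^{\leq a}(\tau H_1 + \tau H_2) = 0$ proved in the preceding paragraph. Pairing both sides with $\eta_f$ and inserting the slope projector, only the $\tau H$ term survives on the right, so
\[
\langle Q(\Frob_p)\varphi_{2,*}\Upsilon(\omega), \eta_f\rangle_{\dR} = \langle e^{\leq a}\tau H, \eta_f\rangle_{\dR}.
\]
Combined with the identity $\AJ_p(\Delta_{\ell,k})(\pi_1^*\omega_g \cup \pi_2^*\eta_f) = \langle \varphi_{2,*}\Upsilon(\omega), \eta_f\rangle_{\dR}$ already obtained, this is the starting point.

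Next, I would transfer $Q(\Frob_p)$ across the Poincar\'e pairing via the standard Frobenius-duality in de Rham cohomology. Since $\Frob_p\eta_f = \alpha_f^*\eta_f$ with $\alpha_f^*$ of slope $\leq a$, and since $Q(T) = P(p^{|t_L - \ell|}T)$ is precisely the Tate-twisted polynomial adapted to the weight of the cycle, the result is that pairing against $\eta_f$ replaces $Q(\Frob_p)$ by the scalar $P((\alpha_f^*)^{-1}p^{-s-1}) = \mathscr{E}_p(g,f^*)$, matching the Euler factor of Theorem \ref{Theorem: interpolation formula} at $t = (-s-1,0)$. This yields
\[
\AJ_p(\Delta_{\ell,k})(\pi_1^*\omega_g \cup \pi_2^*\eta_f) = \frac{1}{\mathscr{E}_p(g,f^*)}\,\langle e^{\leq a}\tau H, \eta_f\rangle_{\dR}.
\]

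Then I would unwind the explicit formula for $\tau H$ by substituting $h = (1 - \alpha_1\beta_1\alpha_2\beta_2 V_0(p)^2)d_1^{1-\ell_1}g^{[\mathcal{P}]}$. Since $d_1^{\ell_1-2-j}h = (1 - \alpha_1\beta_1\alpha_2\beta_2 V_0(p)^2)d_1^{-1-j}g^{[\mathcal{P}]}$, comparison with the definition of $H'$ gives at the level of $q$-expansions
\[
\tau H = H' - \alpha_1\beta_1\alpha_2\beta_2\, V_0(p)^2 \cdot H',
\]
where we use that $\zeta^*$ intertwines $V_0(\mathfrak{p}_1)V_0(\mathfrak{p}_2)$ on the $L$-side with $V_0(p)$ on the $\Q$-side. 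Because $V_0(p)$ is adjoint to $U_0(p)$ under the Petersson pairing (in the normalisation of Theorem \ref{Theorem: interpolation formula}), pairing against $f^*$ replaces $V_0(p)^2$ by $(\alpha_f^*)^{-2}p^{-2s-2}$, so
\[
\langle e^{\leq a}\tau H, \eta_f\rangle_{\dR} = \bigl(1 - \alpha_1\beta_1\alpha_2\beta_2(\alpha_f^*)^{-2}p^{-2s-2}\bigr)\cdot \frac{\langle e^{\leq a}H', f^*\rangle}{\langle f^*,f^*\rangle} = \mathscr{E}_{0,p}(g,f^*)\cdot \frac{\langle e^{\leq a}H', f^*\rangle}{\langle f^*,f^*\rangle},
\]
where the last step uses the defining property of $\eta_f$ that converts the de Rham pairing with a cuspidal class into the normalised Petersson pairing. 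Assembling the two equalities gives the stated formula.

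The main technical obstacle is the bookkeeping of the normalisations: the Tate twist in $Q(T)=P(p^{|t_L-\ell|}T)$, the compatibility of $\zeta^*$ with the Hecke operators $V_0(\mathfrak{p}_i)$ and $V_0(p)$, and the exponent $p^{-s-1}$ arising from the weight discrepancy between $g$ and $f$. Checking that these conspire to produce exactly the Euler factors $\mathscr{E}_p$ and $\mathscr{E}_{0,p}$ of Theorem \ref{Theorem: interpolation formula} specialised at $t=(-s-1,0)$ is where the calculation requires the most care; the geometric and cohomological setup has already been reduced by the identity $Q(\Frob_p)\varphi_{2,*}\Upsilon(\omega) = \tau(G)$ and the vanishing of $e^{\leq a}(\tau H_1 + \tau H_2)$.
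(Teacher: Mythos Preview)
Your proposal is correct and follows essentially the same approach as the paper. The paper's own proof consists of the single sentence ``The computation of the Euler factors is identical to that of \cite[Theorem~5.14]{blanco-chacón_fornea_2020}, except that we are using different normalisations of Hecke operators,'' relying on the preceding derivation of $Q(\Frob_p)(\varphi_{2,*}\Upsilon(\omega)) = \tau(G)$ and the vanishing $e^{\leq a}(\tau H_1 + \tau H_2) = 0$; your sketch simply unpacks that reference, moving $Q(\Frob_p)$ across the Poincar\'e pairing using $\Frob_p\eta_f = \alpha_f^*\eta_f$ to produce $\mathscr{E}_p(g,f^*)$, and then extracting the factor $(1-\alpha_1\beta_1\alpha_2\beta_2 V_0(p)^2)$ from $h$ to produce $\mathscr{E}_{0,p}(g,f^*)$, exactly as in the cited source.
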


Now, one compares the form $H'$ to $\zeta^*( \nabla^{(-(s+1), 0)} g_{\ell}^{[\mathcal{P}]})$.
Recall that the polynomial $q$-expansion of $\zeta^*( \nabla^{(-(s+1), 0)} g_{\ell}^{[\mathcal{P}]})$ is 
\begin{equation}
    \sum_{j=0}^{\ell_1-s-2} \binom{-t}{j} \prod_{i =0}^{j-1} (\ell_1 -s -2 -i) \zeta^* \left (d_1^{-s-1-j} g_{\ell}^{[\mathcal{P}]}(q) \right ) \cdot \omega_{\can}^{k -j} \eta_{\can}^{j}.
\end{equation}
After a change of variable, one can rewrite $H'$ as
\begin{equation}
    H' = \sum_{j =0}^{\ell_1-s -2} (-1)^{j+s} (j+s)! \binom{\ell_1-2-s}{j} \ \zeta^* (d_1^{-1 -s-j} g^{[\mathcal{P}]}) \cdot \omega_{\can}^{k-2 - j} \eta_{\can}^{j} \cdot \frac{dq}{q}.
\end{equation}
From the above two descriptions and formula (\ref{equation: overconvergent projection}), it can be derived from a careful computation that 
\begin{equation}
    H^\dagger (H') = (-1)^s s!  H^\dagger (\zeta^*( \nabla^{(-(s+1), 0)} g_{\ell}^{[\mathcal{P}]})) .
\end{equation}
The $p$-adic Gross--Zagier formula now follows easily, with an extra Euler factor 
$$\mathscr{E}(f^*) = (1 - \frac{\beta^*_{f}}{\alpha^*_{f}} ) = (1 -\beta_{f}^2 \chi_f^{-1}(p) p^{1-k})$$
coming from comparing Petersson products with and without level at $p$ (c.f. \cite[\S~3.3]{tripleL_and_GZ_formula_finite_slope}).

In summary, we obtain
\begin{theorem} \label{theorem: GZ formula split case}
    Suppose $p$ is split in $L$.
    Let $(P, Q) = ( (v, n), (w, m))$ be a classical point corresponding to a balanced tuple $(\ell, k)$ that satisfies Assumption \ref{assumption: classical specialisation}. 
    Then we have 
\begin{equation}
    \mathscr{L}_p(\omega_\mathbf{g}, \omega_\mathbf{f})(P, Q) = \frac{(-1)^s}{s! \mathscr{E}(f_k^*)} \frac{\mathscr{E}_p(g_\ell, f_k^*)}{\mathscr{E}_{0,p} (g_\ell, f_k^*) } \AJ_p (\Delta_{\ell, k}) (\pi_1^* \omega_{g_\ell} \cup \pi_2^* \eta_{f_k}).
\end{equation}
\end{theorem}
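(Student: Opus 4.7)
The plan is to assemble the identity directly from the three computations that have just been carried out, tracking how each factor on the right-hand side arises. At a classical specialization $(P,Q)$ corresponding to a balanced tuple $(\ell,k)$ with $\ell_1+\ell_2-2(s+1)=k$, the weight $r$ specializes to $(-s-1,0)$, so by Definition \ref{definition: twisted triple product} we have
\[
\mathscr{L}_p(\omega_{\mathbf{g}},\omega_{\mathbf{f}})(P,Q) \;=\; \frac{\langle H^{\dagger,\leq a}\bigl(\zeta^{*}\nabla^{(-s-1,0)} g_{\ell}^{[\mathcal{P}]}\bigr),\,f_{k}^{*}\rangle}{\langle \omega_{\mathbf{f}}^{*},\omega_{\mathbf{f}}^{*}\rangle},
\]
where I have already specialized the numerator to the classical $f_{k}^{*}$. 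The first step is to replace the nearly overconvergent form in the numerator by $H'$ using the identity
\[
H^{\dagger}(H') \;=\; (-1)^{s}\,s!\,H^{\dagger}\bigl(\zeta^{*}\nabla^{(-s-1,0)} g_{\ell}^{[\mathcal{P}]}\bigr),
\]
established in the preceding computation; applying $e^{\leq a}$ throughout gives the same relation after the overconvergent projection for the slope $\leq a$ parts.

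Next I would invoke the Abel--Jacobi computation
\[
\AJ_p(\Delta_{\ell,k})(\pi_1^{*}\omega_{g_\ell}\cup\pi_2^{*}\eta_{f_k}) \;=\; \frac{\mathscr{E}_{0,p}(g_\ell,f_k^{*})}{\mathscr{E}_p(g_\ell,f_k^{*})}\cdot \frac{\langle e^{\leq a}H',\,f_k^{*}\rangle}{\langle f_k^{*},f_k^{*}\rangle}
\]
to solve for $\langle e^{\leq a}H',f_k^{*}\rangle$ in terms of the Abel--Jacobi image. Substituting back, the numerator becomes
\[
\frac{(-1)^s}{s!}\cdot \frac{\mathscr{E}_p(g_\ell,f_k^{*})}{\mathscr{E}_{0,p}(g_\ell,f_k^{*})}\cdot\langle f_k^{*},f_k^{*}\rangle\cdot \AJ_p(\Delta_{\ell,k})(\pi_1^{*}\omega_{g_\ell}\cup\pi_2^{*}\eta_{f_k}).
\]

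The remaining step is to compare the Petersson product at level $K_{11}(\mathfrak{A})$ against $f_k^{*}$ (appearing in the Abel--Jacobi formula) with the Petersson product at the $p$-stabilized level against $\omega_{\mathbf{f}}^{*}$ (appearing in the denominator of the $p$-adic $L$-function). This ratio produces precisely the Euler factor $\mathscr{E}(f_k^{*}) = 1-\beta_{f,p}^{*}/\alpha_{f,p}^{*}$; this is the content of the comparison of Petersson inner products between level $N$ and level $Np$ recalled from \cite[\S3.3]{tripleL_and_GZ_formula_finite_slope}. Collecting the factors yields exactly the formula in the statement.

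The most delicate step to verify carefully is the combinatorial identity $H^{\dagger}(H')=(-1)^{s}s!\,H^{\dagger}(\zeta^{*}\nabla^{(-s-1,0)}g_\ell^{[\mathcal{P}]})$, since it requires matching the expansions
\[
H' \;=\; \sum_{j=0}^{\ell_1-s-2}(-1)^{j+s}(j+s)!\binom{\ell_1-2-s}{j}\zeta^{*}(d_1^{-1-s-j}g^{[\mathcal{P}]})\cdot \omega_{\can}^{k-2-j}\eta_{\can}^{j}\,\frac{dq}{q}
\]
and the analogous expansion of $\zeta^{*}\nabla^{(-s-1,0)}g_\ell^{[\mathcal{P}]}$ obtained from \eqref{equation: iteration} term-by-term, and then applying the overconvergent-projection formula \eqref{equation: overconvergent projection} to each. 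After that identity is in hand, the theorem reduces to bookkeeping. The secondary technical point is ensuring that the normalization of Hecke eigenvalues $(\alpha_{g,\mathfrak{p}_i},\beta_{g,\mathfrak{p}_i})$ used in defining $P(T)$ and $\mathscr{E}_p,\mathscr{E}_{0,p}$ agrees with the normalization dictated by Theorem \ref{Theorem: interpolation formula}, which is why the Euler factors appearing in the Abel--Jacobi computation match those of the interpolation formula with $t$ replaced by $(-s-1,0)$.
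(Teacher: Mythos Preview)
Your proposal is correct and follows essentially the same route as the paper: you assemble the formula from the Abel--Jacobi computation giving $\AJ_p(\Delta_{\ell,k})$ in terms of $\langle e^{\leq a}H',f_k^*\rangle$, the combinatorial identity $H^{\dagger}(H')=(-1)^{s}s!\,H^{\dagger}(\zeta^{*}\nabla^{(-s-1,0)}g_\ell^{[\mathcal{P}]})$, and the level-raising Euler factor $\mathscr{E}(f_k^*)$ from comparing Petersson products at level $\mathfrak{A}$ versus $\mathfrak{A}p$. These are exactly the three ingredients the paper invokes, in the same order and with the same references.
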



    

\subsection{The inert case}

In this section, we will examine the case where $p$ is inert in the quadratic extension $L$, which is not covered in \cite{blanco-chacón_fornea_2020}.
We will keep most of the notations as before.

In this situation, there is only one Hecke operator $V_0(p)$ and one Hecke polynomial 
$$P(T) = (1 -\alpha T)(1- \beta T),$$
with the usual relation $P(V_0(p)) \omega_g = P(p^{|t_L-\ell|} \mathrm{Frob}_p) \omega_g = \omega_g^{[p]}$.
We let $G = \nabla_1^{-1} \omega_g^{[p]}$ and $Q(T) = P(p^{|t_L-\ell |}T)$.
Then the pair $[\omega_g , G]$ represents the class $\tilde{\omega}$ in $H^{|\ell|-2}_{\fp, Q}( \mathscr{U}_{\ell-4}, |\ell| -2 -s)$.

The polynomial $q$-expansion of $G$ is 
\begin{equation}
    G = \sum_{j = 0}^{\ell_1 -2} (-1)^j \frac{(\ell_1 -2 )!}{(\ell_1-2 -j)!} d_1^{-1- j} g^{[p]} \times \left ( \omega_{\can, 1}^{\ell_1-2-i} \eta_{\can, 1 }^i \otimes \omega_{\can, 2}^{\ell_2-2}  \right ) \otimes \frac{dq_2}{q_2}.
\end{equation}
Similarly, one computes $\tau G$ and get
\begin{equation}
    \tau G = \sum_{j =s}^{\ell_1-2} (-1)^j j! \binom{\ell_1-2-s}{j-s} \ \zeta^* ( d_1^{-1- j} g^{[p]}) \cdot \omega_{\can}^{k-2 - j +s} \eta_{\can}^{j-s} \cdot \frac{dq}{q}.
\end{equation}
Again, we have the relation $Q(\mathrm{Frob}) (\varphi_{2, *} \Upsilon(\omega)) = \tau G$ and the following theorem.
\begin{theorem} 
    Suppose $p$ is inert in $L$. We have
    \begin{equation}
        \AJ_p(\Delta_{\ell, k}) (\pi_1^* \omega_g \cup \pi_2^* \eta_f ) = \frac{1}{\mathscr{E}_p(g, f^*)} \cdot \frac{ \langle e^{\leq a}  \tau G , f^* \rangle}{\langle f^*, f^* \rangle},
    \end{equation}
    where 
    $\mathscr{E}_p(g, f^*) = P((\alpha_{f}^*)^{-1} p^{-s-1})$ is as in Theorem \ref{Theorem: interpolation formula}. 
\end{theorem}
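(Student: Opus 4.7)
The strategy is to parallel the argument already given in the split case, with the crucial simplification that when $p$ is inert we can take $G = \nabla_1^{-1}\omega_g^{[p]}$ directly as a primitive of the $p$-depleted form using a single inversion of the Gauss--Manin connection. This removes the need for the three-term decomposition $h, h_1, h_2$ that was necessary when $p$ splits, and in particular eliminates the auxiliary factor $(1 - \alpha_1\beta_1\alpha_2\beta_2 V_0(p)^2)$ that produced the Euler factor $\mathscr{E}_{0,p}$ in the split setting. This is precisely why $\mathscr{E}_{0,p}$ is absent from the inert formula.

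The proof proceeds in three steps. First, I verify that the pair $[\omega_g, G]$ represents the canonical lift $\tilde\omega \in H^{|\ell|-2}_{\fp, Q}(\mathscr{U}_{\ell-4}, |\ell|-2-s)$ of the class $\omega \in \Fil^{|\ell|-2-s}\theta_\ell^* H^{|\ell|-2}_{\dR}(U_{\ell-4}/K)$: this reduces to checking that $\nabla G = Q(\mathrm{Frob}_p)\omega_g$, which holds by construction since $Q(\mathrm{Frob}_p)\omega_g = P(V_0(p))\omega_g = \omega_g^{[p]}$. Second, I unwind the action of $\tau = \theta_k^*\varphi_{2,*}\varphi_1^*\theta_\ell^*$ on $G$ through the same symmetrisation-desymmetrisation process employed in the split case to obtain the explicit $q$-expansion of $\tau G$ displayed above. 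The short exact sequence of finite polynomial cohomology then yields the identity
\[
Q(\mathrm{Frob}_p)\bigl(\varphi_{2,*}\Upsilon(\omega)\bigr) = \tau G.
\]

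Third, I pair both sides with $\eta_f$, combining with the formula $\AJ_p(\Delta_{\ell,k})(\pi_1^*\omega_g \cup \pi_2^*\eta_f) = \langle \varphi_{2,*}\Upsilon(\omega), \eta_f\rangle_{\dR}$ derived in the previous subsection, and transfer the operator $Q(\mathrm{Frob}_p)$ to the side carrying $\eta_f$ via Poincaré duality. The eigenvalue relation $\mathrm{Fr}_p(\eta_f) = \alpha_f^*\eta_f$ converts this transfer into multiplication by the scalar $\mathscr{E}_p(g, f^*) = P((\alpha_f^*)^{-1}p^{-s-1})$, which consequently appears in the denominator of the final expression. Inserting the slope projector $e^{\leq a}$, which fixes $\eta_f$ since $\eta_f$ lies in the slope $\leq a$ part of the Frobenius, and converting the de Rham pairing to the Petersson inner product through the defining identity $\langle -, \eta_f\rangle_{\dR} = \langle -, f^*\rangle/\langle f^*, f^*\rangle$ yields the stated formula.

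The main obstacle is keeping careful track of the normalisations during the Frobenius transfer: one must reconcile the action of the geometric Frobenius on de Rham cohomology with the Hecke-theoretic operator $V_0(p) = p^{|t_L - \ell|}\mathrm{Frob}_p$ acting on overconvergent forms, and correctly incorporate the Tate twist $|\ell|-2-s$ encoded in the finite polynomial cohomology so as to recover the precise factor $p^{-s-1}$ inside $\mathscr{E}_p$. These bookkeeping issues are handled exactly as in \cite[Theorem~5.14]{blanco-chacón_fornea_2020}, with the welcome simplification that only a single two-factor Hecke polynomial enters the computation.
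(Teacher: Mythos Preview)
Your proposal is correct and follows essentially the same route as the paper: set $G=\nabla_1^{-1}\omega_g^{[p]}$ so that $[\omega_g,G]$ represents $\tilde\omega$, apply $\tau$ and the short exact sequence of finite polynomial cohomology to obtain $Q(\mathrm{Frob}_p)(\varphi_{2,*}\Upsilon(\omega))=\tau G$, then pair with $\eta_f$ and extract the Euler factor $\mathscr{E}_p(g,f^*)$ from the Frobenius eigenvalue exactly as in the split case (and in \cite[Theorem~5.14]{blanco-chacón_fornea_2020}). Your explicit remark that the absence of $\mathscr{E}_{0,p}$ stems from not needing the $(1-\alpha_1\beta_1\alpha_2\beta_2 V_0(p)^2)$ correction is spot on and matches the paper's implicit reasoning.
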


With a change of variables, one can rewrite
\begin{equation}
    \tau G = \sum_{j =0}^{\ell_1-s -2} (-1)^{j+s} (j+s)! \binom{\ell_1-2-s}{j} \ \zeta^* (d_1^{-1 -s-j} g^{[p]}) \cdot \omega_{\can}^{k-2 - j} \eta_{\can}^{j} \cdot \frac{dq}{q}.
\end{equation}
A similar computation shows that  
$$\tau G = (-1)^s s! \zeta^* ( \nabla^{(-(s+1), 0)} g_{\ell}^{[p]}) .$$
Consequently, we obtain the $p$-adic Gross--Zagier formula:
\begin{theorem} \label{theorem: GZ formula inert case}
    Suppose $p$ is inert in $L$.
    Let $(P, Q) = ( (v, n), (w, m))$ be a classical point corresponding to a balanced tuple $(\ell, k)$ that satisfies Assumption \ref{assumption: classical specialisation}. 
    Then we have 
    \begin{equation}
    \mathscr{L}_p(\omega_{\mathbf{g}}, \omega_{\mathbf{f}})(P, Q) = \frac{(-1)^s}{s! \mathscr{E}(f_k^*)} {\mathscr{E}_p(g_\ell, f_k^*)} \AJ_p (\Delta_{\ell, k}) (\pi_1^* \omega_{g_\ell} \cup \pi_2^* \eta_{f_k}),
    \end{equation}
    where $\mathscr{E}_p(g_\ell, f_k^*) =  P(\alpha_{f^*}^{-1} p^{-s-1})$ and $\mathscr{E}(f_k^*) =  (1 - \frac{\beta^*_{f}}{\alpha^*_{f}} ) $ are as before.
\end{theorem}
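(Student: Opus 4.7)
The plan is to combine the preceding Abel--Jacobi computation with the interpolation property of $\mathscr{L}_p$ and reconcile the two expressions via a direct comparison of $q$-expansions. Concretely, I will start from the specialisation of $\mathscr{L}_p(\omega_{\mathbf{g}}, \omega_{\mathbf{f}})$ at $(P,Q)$ as given by Definition \ref{definition: twisted triple product}, which reads
\[
\mathscr{L}_p(\omega_{\mathbf{g}}, \omega_{\mathbf{f}})(P,Q) \;=\; \frac{\langle H^{\dagger,\leq a}(\zeta^*\nabla^{(-s-1,0)} g_\ell^{[\mathcal{P}]}),\, f_k^{\ast}\rangle}{\langle \omega_{\mathbf{f}}^{\ast}, \omega_{\mathbf{f}}^{\ast}\rangle},
\]
where the classical specialisation of the weight $r$ at $(P,Q)$ is $(-s-1,0)$ under our convention that the analytic part of $r$ is supported in the $\sigma_1$-direction.

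Next, I will invoke the Abel--Jacobi formula established in the theorem immediately preceding, namely
\[
\AJ_p(\Delta_{\ell,k})(\pi_1^{\ast}\omega_{g_\ell}\cup \pi_2^{\ast}\eta_{f_k}) \;=\; \frac{1}{\mathscr{E}_p(g_\ell, f_k^{\ast})}\cdot \frac{\langle e^{\leq a} \tau G,\, f_k^{\ast}\rangle}{\langle f_k^{\ast}, f_k^{\ast}\rangle},
\]
so the proof reduces to identifying $e^{\leq a}\tau G$ (up to the factor $(-1)^s s!$) with $H^{\dagger,\leq a}(\zeta^*\nabla^{(-s-1,0)} g_\ell^{[\mathcal{P}]})$, and to reconciling the two Petersson denominators $\langle f_k^{\ast}, f_k^{\ast}\rangle$ and $\langle \omega_{\mathbf{f}}^{\ast}, \omega_{\mathbf{f}}^{\ast}\rangle$.

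For the first identification, I will re-index the sum defining $\tau G$ by the substitution $j \mapsto j+s$, obtaining
\[
\tau G \;=\; \sum_{j=0}^{\ell_1-s-2}(-1)^{j+s}(j+s)!\binom{\ell_1-2-s}{j}\,\zeta^{\ast}(d_1^{-s-1-j} g^{[p]})\cdot \omega_{\can}^{k-2-j}\eta_{\can}^{j}\,\tfrac{dq}{q},
\]
and compare it termwise to the $q$-expansion of $\zeta^{\ast}(\nabla^{(-s-1,0)} g_\ell^{[\mathcal{P}]})$ recalled in \eqref{equation: iteration}. Applying the overconvergent projection formula \eqref{equation: overconvergent projection} to both sides and simplifying the combinatorial factors (binomial coefficients against the telescoping products $(k-2-i+1)\cdots(k-2)$ using the balanced relation $k = \ell_1+\ell_2-2(s+1)$) will produce the clean identity
\[
H^{\dagger}(\tau G) \;=\; (-1)^s s!\, H^{\dagger}\!\bigl(\zeta^{\ast}(\nabla^{(-s-1,0)}g_\ell^{[p]})\bigr),
\]
exactly as in the split case. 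Passing to the slope $\leq a$ projection and pairing against $f_k^{\ast}$ is then compatible with $H^{\dagger}$ by construction.

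Finally, to go from the denominator $\langle f_k^{\ast}, f_k^{\ast}\rangle$ to $\langle \omega_{\mathbf{f}}^{\ast}, \omega_{\mathbf{f}}^{\ast}\rangle$, I will use the standard computation comparing Petersson products at level with $p$-factor to those at level prime-to-$p$ (cf. \cite[\S3.3]{tripleL_and_GZ_formula_finite_slope}), which produces exactly the Euler factor $\mathscr{E}(f_k^{\ast}) = 1 - \beta_f^{\ast}/\alpha_f^{\ast}$. Assembling these three ingredients yields the claimed formula. The main subtlety is the combinatorial verification that the two $q$-expansions indeed agree under $H^{\dagger}$ after the re-indexing, in particular tracking the precise cancellation between the factorials $(j+s)!$, the binomial $\binom{\ell_1-2-s}{j}$, and the denominators in \eqref{equation: overconvergent projection}; once this is checked, the inert case proceeds in strict parallel with the split case, the essential simplification being that here $G = \nabla_1^{-1}\omega_g^{[p]}$ is a single primitive rather than a sum $H+H_1+H_2$, so the vanishing arguments of the split case (killing $e^{\leq a}\tau H_1$ and $e^{\leq a}\tau H_2$) are not needed, and consequently no $\mathscr{E}_{0,p}$ factor appears in the final formula.
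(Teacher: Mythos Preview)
Your proposal is correct and follows essentially the same route as the paper: re-index $\tau G$ via $j\mapsto j+s$, compare coefficients with the $q$-expansion of $\zeta^{\ast}(\nabla^{(-s-1,0)}g_\ell^{[p]})$, and absorb the level-at-$p$ discrepancy in the Petersson denominators into the factor $\mathscr{E}(f_k^{\ast})$. One small sharpening: the combinatorial identity you plan to check after applying $H^{\dagger}$ in fact holds \emph{before} projection, i.e.\ the ratio of the $j$-th coefficients is $(-1)^{s}s!$ independently of $j$ (since $\binom{-s-1}{j}\prod_{i=0}^{j-1}(\ell_1-s-2-i) = (-1)^{j}\frac{(s+j)!}{s!}\binom{\ell_1-s-2}{j}$), so $\tau G = (-1)^{s}s!\,\zeta^{\ast}(\nabla^{(-s-1,0)}g_\ell^{[p]})$ as nearly overconvergent forms and the denominators from \eqref{equation: overconvergent projection} play no role.
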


\printbibliography
\end{document}